\newcommand{\bC}{\ensuremath{\mathbf{C}}}
\newcommand{\bE}{\ensuremath{\mathbf{E}}}
\newcommand{\bH}{\ensuremath{\mathbf{H}}}
\newcommand{\bL}{\ensuremath{\mathbf{L}}}
\newcommand{\bN}{\ensuremath{\mathbf{N}}}
\newcommand{\bR}{\ensuremath{\mathbf{R}}}
\newcommand{\bS}{\ensuremath{\mathbf{S}}}
\newcommand{\bZ}{\ensuremath{\mathbf{Z}}}
\newcommand{\bbC}{\ensuremath{\mathbb{C}}}
\newcommand{\bbR}{\ensuremath{\mathbb{R}}}
\newcommand{\mA}{\ensuremath{\mathcal{A}}}
\newcommand{\mB}{\ensuremath{\mathcal{B}}}
\newcommand{\mC}{\ensuremath{\mathcal{C}}}
\newcommand{\mD}{\ensuremath{\mathcal{D}}}
\newcommand{\mE}{\ensuremath{\mathcal{E}}}
\newcommand{\mF}{\ensuremath{\mathcal{F}}}
\newcommand{\mG}{\ensuremath{\mathcal{G}}}
\newcommand{\mH}{\ensuremath{\mathcal{H}}}
\newcommand{\mL}{\ensuremath{\mathcal{L}}}
\newcommand{\mM}{\ensuremath{\mathcal{M}}}
\newcommand{\mN}{\ensuremath{\mathcal{N}}}
\newcommand{\mO}{\ensuremath{\mathcal{O}}}
\newcommand{\mS}{\ensuremath{\mathcal{S}}}
\newcommand{\mU}{\ensuremath{\mathcal{U}}}
\newcommand{\mV}{\ensuremath{\mathcal{V}}}
\newcommand{\tA}{\ensuremath{\tilde{A}}}
\newcommand{\tT}{\ensuremath{\tilde{T}}}
\newcommand{\tmD}{\ensuremath{\widetilde{\mathcal{D}}}}
\def\lamb{\lambda}
\def\eps{\varepsilon}
\def\om{\omega}
\def\vphi{\varphi}
\def\ind#1{\mathds{1}_{#1}}
\newcommand{\tta}{\ensuremath{\theta}}
\def\N#1{\left\|\,#1\,\right\|}
\def\Ninf#1{\left\|\,#1\,\right\|_{\infty}}
\newcommand{\Nd}[1]{\left\|\,#1\,\right\|_{2}}
\newcommand{\Ndmu}[1]{\left\|\,#1\,\right\|_{\mu, 2}}
\newcommand{\NsQmu}[1]{\left\|\,#1\,\right\|_{\mu, -1, q}}
\newcommand{\Nsq}[1]{\left\|\,#1\,\right\|_{-1, q_0}}
\newcommand{\NdPhi}[1]{\left\|\,#1\,\right\|_{2, w}}
\newcommand{\Ndk}[1]{\left\|\,#1\,\right\|_{2, k}}
\newcommand{\NsPhi}[1]{\left\|\,#1\,\right\|_{-1, w}}
\newcommand{\NH}[1]{\left\|\,#1\,\right\|_{\bH_{\mu}}}
\newcommand{\NHw}[1]{\left\|\,#1\,\right\|_{\Hw}}
\newcommand{\cro}[2]{\ensuremath{\left\langle#1\, ,\, #2\right\rangle}}
\newcommand{\crosqqmu}[2]{\ensuremath{\left\langle#1\, ,\, #2\right\rangle_{\mu, -1, q}}}
\newcommand{\crodmu}[2]{\ensuremath{\left\langle#1\, ,\, #2\right\rangle_{\mu, 2}}}
\newcommand{\crosPhi}[2]{\ensuremath{\left\langle#1\, ,\, #2\right\rangle_{-1, w}}}
\newcommand{\crodPhi}[2]{\ensuremath{\left\langle#1\, ,\, #2\right\rangle_{2, w}}}
\newcommand{\croH}[2]{\ensuremath{\left\langle#1\, ,\, #2\right\rangle_{\bH_{\mu}}}}
\newcommand{\hp}{h_{+}}
\newcommand{\hm}{h_{-}}
\newcommand{\HsPhi}{\ensuremath{\bH^{-1}_{w}}}
\newcommand{\Hsq}{\ensuremath{\bH^{-1}_{{q_0}}}}
\newcommand{\Hsmu}{\ensuremath{\bH^{-1}_{{\mu}}}}
\newcommand{\Hw}{\ensuremath{H_w}}
\newcommand{\Hz}{\ensuremath{\bH_0}}
\newcommand{\Ldk}{\ensuremath{\bL^2_{k}}}
\newcommand{\LdPhi}{\ensuremath{\bL^2_{w}}}
\newcommand{\Ldkmu}{\ensuremath{\bL^2_{\mu, k}}}
\newcommand{\Ldmu}{\ensuremath{\bL^2_{\mu}}}
\newcommand{\Ld}{\ensuremath{\bL^2}}
\DeclareMathOperator{\Cov}{Cov}
\DeclareMathOperator{\Span}{Span}
\def\ens#1#2{\left\{#1;\,#2\right\}}
\def\Som#1#2#3{\sum_{#1=#2}^{#3}}
\DeclareMathOperator*{\Supp}{Supp}
\newcommand{\intS}{\int_{\Sun}}
\newcommand{\intR}{\int_{\bR}}
\newcommand{\intSR}{\int_{\Sun\times\bR}}
\newcommand{\Sun}{\ensuremath{\mathbf{S}}}
\DeclareMathSymbol{\leqslant}{\mathalpha}{AMSa}{"36} % nicer `smaller or equal'
\DeclareMathSymbol{\geqslant}{\mathalpha}{AMSa}{"3E} % nicer `larger or equal'
\DeclareMathSymbol{\eset}{\mathalpha}{AMSb}{"3F}     % nicer `emptyset'
\renewcommand{\leq}{\;\leqslant\;}                   % redef. of < or =
\renewcommand{\geq}{\;\geqslant\;}                   % redef. of > or =
\newcommand{\dd}{\,\text{\rm d}} 
\newcommand{\wrt}{w.r.t.\ }
\newcommand{\iid}{i.i.d.\ }
\newcommand{\half}{\ensuremath{\frac{1}{2}}}
\newcommand{\gapA}{\ensuremath{\lambda_K(A)}}
\newcommand{\gapAf}{\ensuremath{\lambda_K(\tA_1)}}
\newcommand{\gapAs}{\ensuremath{\lambda_K(\tA_2)}}
\newcommand{\gapz}{\ensuremath{\lambda_K(L_{q_0})}}
\newcommand{\gapL}{\ensuremath{\lambda_K(L)}}
\newcommand{\Pneg}{P_{\mbox{\tiny $\!\!<\!\!0$}\,}}
\newcommand{\Pzero}{P_{0}\,}
\newcommand{\Gneg}{G_{\mbox{\tiny $\!<\!\!0$}\,}}
\newcommand{\Fneg}{F_{\mbox{\tiny $\!\!<\!\!0$}\,}}
\newcommand{\vect}[2]{\begin{pmatrix} #1 \\ #2 \end{pmatrix}}
\newcommand{\svect}[2]{\left(\begin{smallmatrix} #1 \\ #2 \end{smallmatrix}\right)}
\numberwithin{equation}{section}
\newtheorem{theo}{Theorem}[section]
\newtheorem{lem}[theo]{Lemma}
\newtheorem{prop}[theo]{Proposition}
\newtheorem{rem}[theo]{Remark}
\begin{document}
\title{Large time asymptotics for the fluctuation SPDE in the Kuramoto
synchronization model}
\author[el]{Eric Lu{\c c}on}
\ead{eric.lucon@parisdescartes.fr}
\date{\today}
\address[el]{Universit{\'e} Pierre et Marie Curie (Paris 6) and Laboratoire de Probabilit{\'e}s et Mod\`eles Al\'eatoires (CNRS),
U.F.R. Math\'ematiques, Case 188, 4 place Jussieu, 75252 Paris cedex 05, France\\[10pt]
(Current address: Laboratoire MAP5 - Universit\'e Ren\'e Descartes - Paris 5, UFR Math\'ematiques et Informatique, 45 rue des Saints-P\`eres, 75270 Paris cedex 06, France)}

\begin{keyword}
Stochastic partial differential equations \sep perturbation of analytic operators \sep Jordan decomposition \sep Kuramoto model \sep
synchronization \sep disordered systems \sep self-averaging 
\MSC[2010]35P15 \sep 46N60 \sep 47A55 \sep 60H15 \sep 82C22
\end{keyword}

% \subjclass[2010]{}

\begin{abstract}
We investigate the long-time asymptotics of the fluctuation SPDE in the Kuramoto synchronization model. We establish the linear behavior for large time and weak disorder of the quenched limit fluctuations of the empirical measure of the particles around its
McKean-Vlasov limit. This is carried out through a spectral analysis of the underlying unbounded evolution operator, using arguments of perturbation of self-adjoint operators and analytic semigroups. We state in particular a Jordan decomposition of the evolution operator which is the key point in order to show that the fluctuations of the disordered Kuramoto model are not self-averaging.
\end{abstract}
\maketitle
% \tableofcontents

\section{Introduction}
\label{sec:introstabfluct}

\subsection{Synchronization of heterogeneous oscillators}
Collective behavior of oscillators and synchronization phenomenon are the subject of a vast literature either in biological (neuronal models, collective behavior of insects, cells, etc.) or in physical contexts (see~\cite{Kuramoto1984,Strogatz1991} and references therein). While a precise description of each of the different instances in which synchronization emerges demands specific, possibly very complex models, the Kuramoto model~\cite{Acebr'on2005} has emerged as capturing some of the fundamental aspects of synchronization.

The \emph{disordered Kuramoto} model concerns a family of heterogeneous oscillators (or rotators) on the circle $\Sun:=\bR/2\pi\bZ$ in a noisy mean-field interaction (that is the dynamics is perturbed by thermal noise). Each rotator obeys to its own natural frequency which may differ from one rotator to another. Those frequencies are chosen at random and independently for each rotator according to a probability distribution $\mu$ on $\bR$; hence, this supplementary source of randomness will be considered as a \emph{disorder}. 

One of the main characteristics of the Kuramoto model is that it presents a phase transition, as the coupling strength between rotators increases, from an incoherent state where the rotators are not synchronized to a synchronous one where the phases of the rotators concentrate around a common value (for a review on the subject, see \cite{Acebr'on2005}). In this context, the question of how the random frequencies influence synchronization has been raised by many authors, not only in the Kuramoto model (\cite{Strogatz1991}) but also for more general models of weakly interacting diffusions (e.g. neuronal models, see \cite{22657695} and references therein).

\subsubsection{The continuous model}
The disordered Kuramoto model~\cite{Kuramoto1984, Acebr'on2005}, in the limit of an infinite population of rotators, is described by the
following nonlinear Fokker-Planck equation (or \emph{McKean-Vlasov equation}): 
\begin{equation}
\label{eq:fokker planck intro limfluct}
 \partial_t q_t(\tta, \om) = \half \partial_\tta^2 q_t(\tta, \om) - \partial_{\tta} \Big( q_t(\tta, \om)\left(
\langle J \ast q_t\rangle_\mu(\tta) + \om\right) \Big),\quad t>0,\ \tta\in\Sun,\ \om\in\Supp(\mu),
\end{equation}
with periodic boundary conditions and initial condition given by 
\begin{equation}
\label{eq:initial condition fokker planck}
\forall \om\in\Supp(\mu),\quad q_t(\tta, \om)\dd\tta\ \substack{\mbox{}\\\xrightarrow{\hspace*{15pt}}\\t\searrow0}\ \gamma(\dd\tta),
\end{equation}
for some probability law $\gamma$ on the circle $\Sun$. Here, \begin{equation}
\label{eq:J}
\langle J\ast q_t\rangle_\mu(\tta) = -K\int_{\bR}\int_{\Sun}{\sin(\vphi) q_t(\tta-\vphi, \om)\dd\vphi\mu(\dd\om)},
\end{equation}
stands for the convolution of $J(\cdot):=-K\sin(\cdot)$ with $q_t$, averaged with respect to $\om$ and $K$ is the positive coupling strength between rotators. Note that we are looking for solutions $(t, \tta, \om)\mapsto q_t(\tta,\om)$ that are probability densities for all fixed $t$ and $\om$: $q_t(\cdot, \cdot)\geq 0$ for all $t>0$ with $\intS q_t(\tta, \om)\dd\tta=1$ for all $\om\in\Supp(\mu)$. 

The physical interpretation of \eqref{eq:fokker planck intro limfluct}--\eqref{eq:initial condition fokker planck} is the following: $\tta\in\Sun$ is the phase of the rotators, $\gamma$ is their initial distribution on $\Sun$, $\mu$ is the probability distribution of the frequencies, and $q_{t}(\tta, \om)$ is the density of rotators with phase $\tta$ and frequency $\om$ at time $t>0$.

\medskip
Uniqueness of a solution to \eqref{eq:fokker planck intro limfluct}--\eqref{eq:initial condition fokker planck} follows from standard arguments concerning fundamental solutions
of parabolic equations (\cite{Aronson1971, Friedman1964}) and has been rigorously established in~\cite[\S~A]{GLP2011}. Another proof of
uniqueness can be found in~\cite{daiPra96} on the basis of heat kernel estimates under regularity assumptions on the initial condition.

\subsubsection{The microscopic model}
\label{subsubsec:microscopic model}
Existence of a solution to \eqref{eq:fokker planck intro limfluct} can be seen as a consequence of the following probabilistic interpretation: for all $N\geq 1$, consider the following system of $N$ stochastic differential equations in a mean-field interaction
\begin{equation}
\label{eq:Kmod}
\dd\tta_{j,t}\, =\, \om_j \dd t - \frac KN \sum_{i=1}^N \sin(\tta_{j, t}-\tta_{i, t}) \dd t + \dd B_{j, t},\quad j=1, \ldots, N,\ t\geq0,
\end{equation}
where at time $t=0$, the rotators $\tta_{j, 0}$ are \iid with law $\gamma$, $(\om_{i})_{i=1, \ldots, N}$ are \iid with law $\mu$ and $\{B_j\}_{j=1, \ldots,  N}$ are $N$ standard independent Brownian motions. Evolution \eqref{eq:fokker planck intro limfluct} appears naturally as the large $N$-limit of the system \eqref{eq:Kmod} in the following way: if one defines the empirical measure $\nu_N$ of both rotators and frequencies as
\begin{equation}
 \label{eq:empirical measure intro limfluct}
t\mapsto\nu_{N,t}:= \frac1N \Som{j}{1}{N}{\delta_{(\tta_{j,t}, \om_j)}} \in\mC([0, +\infty),\mM_1(\Sun\times\bR)),
\end{equation}
where $\delta_{(\tta, \om)}$ is the Dirac measure in $(\tta, \om)$ and $\mM_1(\Sun\times\bR)$ the set of probability measures on
$\Sun\times\bR$, it can be shown (see~\cite{daiPra96, Lucon2011}) that, under mild hypotheses, the sequence $\nu_N$ converges as $N$ goes to $+\infty$ in law (as a process), to the deterministic limit $t\mapsto\nu_t$ such that 
\begin{equation}
\label{eq:nu limit}
 \left\{\begin{array}{rcl}
\nu_0(\dd\tta, \dd\om)&=&\gamma(\dd\tta)\otimes\mu(\dd\om)\\
\nu_t(\dd\tta, \dd\om)&=&q_t(\tta, \om)\dd\tta\mu(\dd\om), \quad t>0, 
        \end{array}\right.
\end{equation}
where $q_t$ is solution of the McKean-Vlasov equation \eqref{eq:fokker planck intro limfluct}. 
\begin{rem}
\label{rem:selfaveragingLLN}\rm
Due to the mean-field character of \eqref{eq:Kmod}, there is a \emph{self-averaging phenomenon} (see~\cite[Th.~2.5]{Lucon2011}): the above convergence
is true \emph{for almost every choice of the frequencies $(\om_j)_{j\geq1}$} and the limit $\nu$ does not depend on this initial choice. 
\end{rem}
This law of large numbers is a disordered generalization of known results about mean-field interacting diffusions (see e.g.~\cite{Gartner, Jourdain1998, McKean1967, Oelsch1984} for similar situations without disorder). Note also that this convergence is also valid for more general models (see e.g. the recent work on the Winfree model~\cite{MR3032864} or FitzHugh-Nagumo and Hodgkin-Huxley models of neuronal oscillators \cite{22657695}).

\subsection{The fluctuation SPDE}

In this paper, we investigate the asymptotic behavior as $t\to+\infty$ of the following stochastic partial differential equation (SPDE):
\begin{equation}
  \label{eq:SPDE fluctuations limfluct}
\eta_{t} = \eta_0 + \int_{0}^{t}{L_{q_s}\eta_{s} \dd s} + W_{t},
\end{equation}
where $L_{q_s}$ is the linearized operator around the solution $t\mapsto q_t$ of nonlinear evolution \eqref{eq:fokker planck intro limfluct}:
\begin{equation}
\label{eq:L linear qt intro limfluct}
L_{q_t}\vphi(\tta, \om):= \half \partial_\tta^2 \vphi(\tta, \om) - \partial_{\tta} \left(\vphi(\tta, \om)\left(
\langle J \ast q_t\rangle_\mu(\tta) + \om\right) + q_t(\tta, \om) \langle J \ast \vphi\rangle_\mu(\tta)\right),
\end{equation}
where $\vphi$ is a regular function, $W$ is a Gaussian process, indexed by functions $\vphi(\tta, \om)$ such that $\partial_\tta\vphi(\cdot, \om)\in\bL^2(\Sun)$ for all
$\om\in\Supp(\mu)$, with covariance
\begin{equation}
 \label{eq:W process intro limfluct}
\forall\vphi_1,\vphi_2,\ \bE(W_{t}(\vphi_{1})W_{s}(\vphi_{2})) = \int_{0}^{s\wedge t}\intSR \partial_\tta\vphi_{1}(\tta,
\om)\partial_\tta\vphi_{2}(\tta,
\om)q_u(\tta,\om)\dd\tta\mu(\dd\om)\dd u,
\end{equation}
and where the initial condition $\eta_0$ is independent of $W$.
\subsubsection{The SPDE \eqref{eq:SPDE fluctuations limfluct} as the limit of the fluctuation process}
The SPDE \eqref{eq:SPDE fluctuations limfluct} is the natural limit object in the Central Limit
Theorem associated to the convergence as $N\to+\infty$ of the empirical measure $\nu_N$ \eqref{eq:empirical measure intro limfluct} towards its limit $\nu$
\eqref{eq:nu limit}. Namely, the object of a previous work~\cite[Th.~2.10]{Lucon2011} was to prove that the fluctuation process
\begin{equation}
 \label{eq:fluctuations process intro limfluct}
t\geq0\mapsto \eta_{N, t}:= \sqrt{N}\left( \nu_{N, t} - \nu_{t}\right),\ N\geq1,
\end{equation}
converges as $N\to\infty$, in a weak sense, in an appropriate space of distributions on $\Sun\times\bR$, to the solution $\eta$ of \eqref{eq:SPDE fluctuations limfluct}. 

Similar fluctuation results for interacting diffusions had already been considered in the literature (\cite{Fernandez1997, Jourdain1998}). The particularity of the above result is that it is a \emph{quenched} notion of fluctuation, which still keeps track of the influence of the disorder $(\om_{1}, \ldots, \om_{N})$ as $N\to\infty$. The
precise notion of convergence used in~\cite{Lucon2011} is not really relevant for the purpose of this paper; more details can be found in~\cite[Th.~2.10]{Lucon2011}. What we only need to retain here is that the limit $\eta= (\eta^{\om})_{\om}$ captures the dependence in the disorder through its mean-value: there exists a Gaussian process $\om\mapsto C(\om)$
with covariance
\begin{equation}
\label{eq:covariance C}
\forall \vphi_1, \vphi_2:\Sun\times\bR\to\bR,\ \Gamma_C(\vphi_1, \vphi_2)= \Cov_\mu\left( \intS \vphi_1(\cdot, \om)\dd\gamma, \intS
\vphi_2(\cdot, \om)\dd\gamma\right)
\end{equation}
such that for fixed $\om$, the initial condition $\eta_0^\om$ in \eqref{eq:SPDE fluctuations limfluct} may be written as 
\begin{equation}
\label{eq:struct etazero}
\eta_0^\om= X + C(\om), 
\end{equation}
where $X$ is an explicit centered Gaussian process. The mean-value $C(\cdot)$ has an interpretation in terms of the microscopic system \eqref{eq:Kmod}: it models in law the asymmetry in the initial choice of the frequencies $(\om_{1}, \ldots, \om_{N})$ as $N\to\infty$ (see \S~\ref{subsec:asymdisor} for further details).

\subsubsection{Finite size effects in the Kuramoto model: non self-averaging phenomenon}
\label{subsubsec:finitesize}
The motivation of this work is to study the influence of a typical realization of the frequencies $(\om_j)_{j\geq1}$ (\emph{quenched model}) on the behavior of \eqref{eq:Kmod} for large but finite $N$. Indeed, (as shown numerically in~\cite{Balmforth2000}), at the level of the microscopic system \eqref{eq:Kmod}, fluctuations of the frequencies $(\om_{i})_{i=1, \ldots, N}$ compete with the fluctuations of the thermal noise and make the whole system rotate: even in the simple case of $\mu=\frac12\left( \delta_{-1} + \delta_1\right)$, fluctuations in a finite sample $(\om_1, \ldots, \om_N)\in \{\pm1\}^N$ may lead to a majority of $+1$ with respect to $-1$, so that the rotators with positive frequency induce a global rotation of the whole system in the direction of the majority. Direction and speed of rotation depend on this initial random choice of the disorder (Fig.~\ref{fig:evoldens intro} and~\ref{subfig:fluctpsiN intro}). This can be noticed by computing the order parameters $\left(r_{N, t}, \psi_{N, t}\right)$ (recall~\eqref{eq:empirical measure intro limfluct}): 
\begin{equation}
\label{eq:intro order parameters N}
r_{N, t} e^{i\psi_{N, t}} = \frac{1}{N} \Som{j}{1}{N}{e^{i \tta_{j, t}}} = \intSR e^{i\tta}\dd\nu_{N, t}.(\tta, \om),\quad N\geq1,\ t\geq0,\end{equation}
Here $r_{N, t}\in [-1, 1]$ gives a notion of synchronization of the system (e.g. $r_{N,t}=1$ if the  oscillators $\tta_{j, t}$ are all equal) and $\psi_{N, t}$ captures the position of the center of synchronization (see Figure~\ref{fig:evoldens intro}). One can see on Figure~\ref{subfig:fluctpsiN intro} that $t\mapsto\psi_{N, t}$ has an approximately linear behavior whose slope depends on
the choice of the disorder. Note that this disorder-induced phenomenon does not happen at the level of the nonlinear Fokker-Planck equation~\eqref{eq:fokker planck intro limfluct}, but only at the level of fluctuations~\eqref{eq:fluctuations process intro limfluct} (the speed of rotation is of order $N^{-1/2}$).
\begin{figure}[!ht]
\centering
\includegraphics[width=0.8\textwidth]{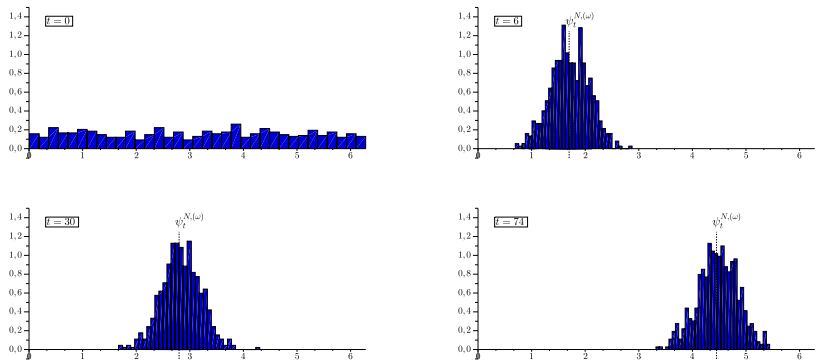}%
\caption{Evolution of the marginal on $\Sun$ of $\nu_N$ ($N=600$, $\mu=\frac{1}{2}(\delta_{-1}+ \delta_{1})$,
$K=6$). The rotators are initially independent and uniformly distributed on $[0, 2\pi]$ and independent of the disorder. First the dynamics leads to synchronization ($t=6$) to a profile close to a nontrivial stationary solution of~\eqref{eq:fokker planck intro limfluct}. Secondly, we observe that the center $\psi_{N, t}$ of this density moves to the
right with an approximately constant speed; this speed of fluctuation turns out to be sample-dependent (see Fig.~\ref{subfig:fluctpsiN intro}).}
\label{fig:evoldens intro}%
\end{figure}
\begin{figure}[!ht]
\centering
\subfloat[Trajectories of the center of synchronization $\psi_N$ for different
realizations of the disorder ($\mu=\frac{1}{2}(\delta_{-0.5}+ \delta_{0.5})$, $K=4$, $N=400$).]{\includegraphics[width=6.4cm]{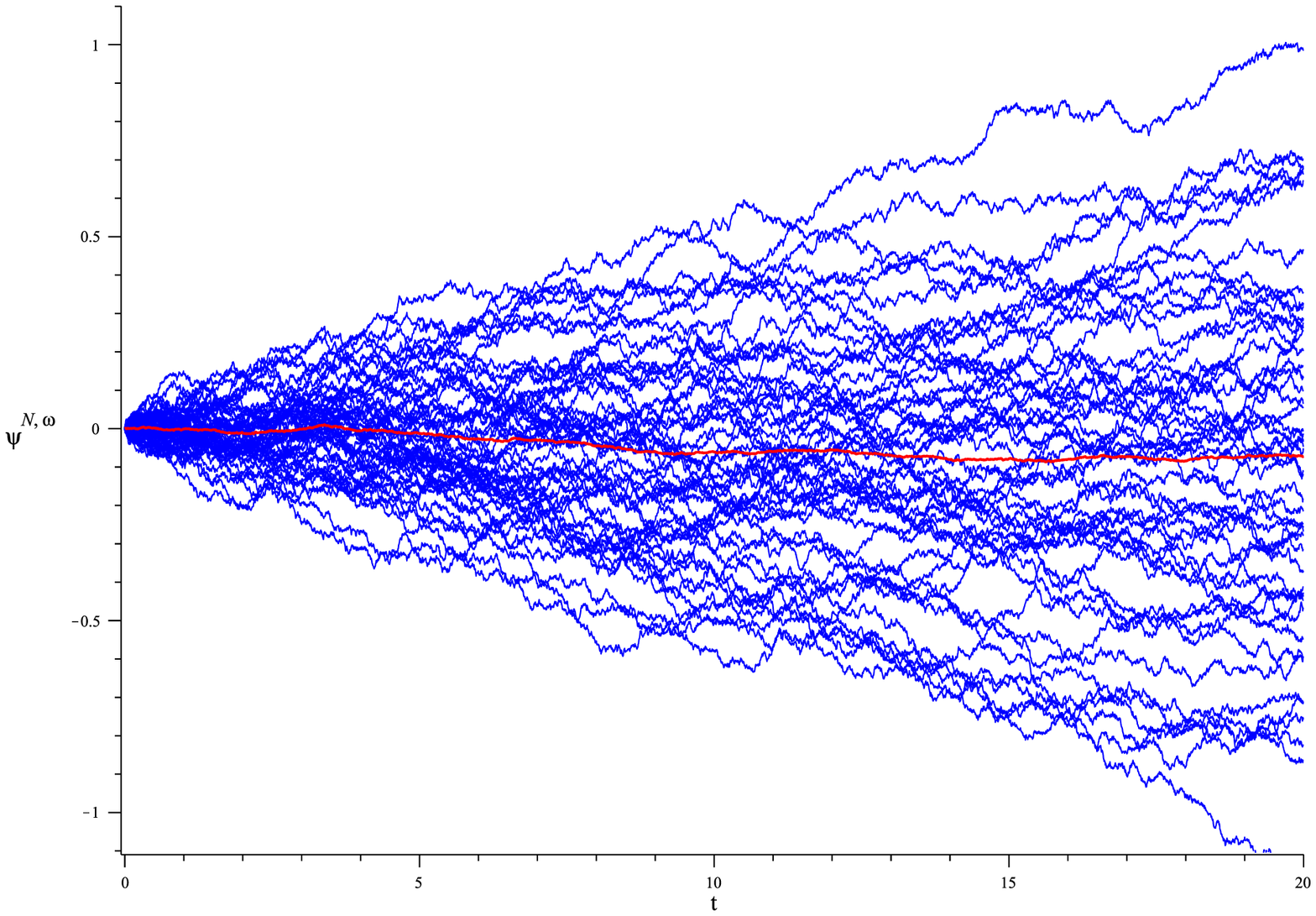}
\label{subfig:fluctpsiN intro}}\quad \subfloat[Trajectories of the process $\eta_t(\sin)$, for different
realizations of the mean-value $C$]{\includegraphics[width=7.7cm]{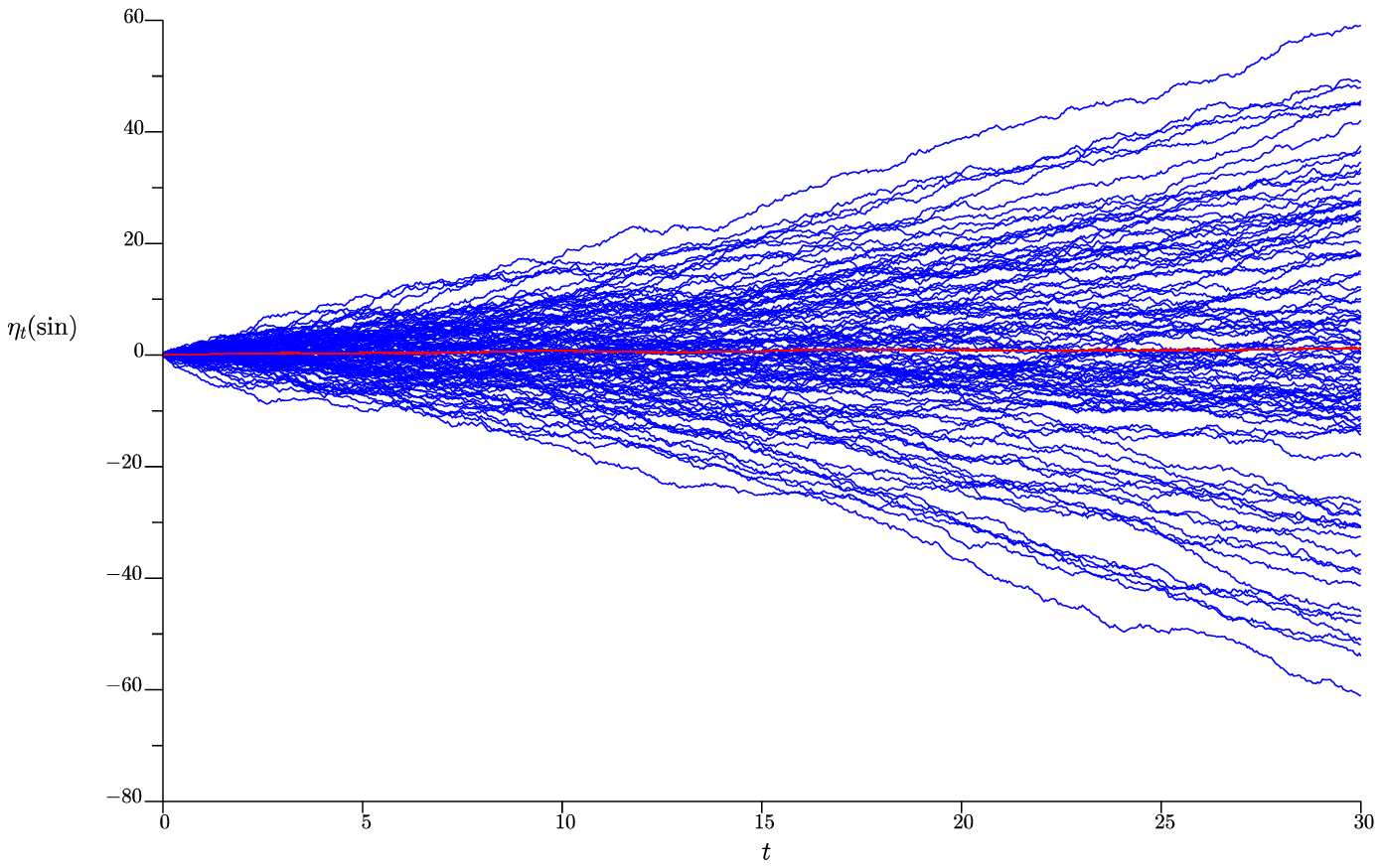}
\label{subfig:evoleta limfluct}}
\caption{Non self-averaging phenomenon in the Kuramoto model: in Fig.~\ref{subfig:fluctpsiN intro}, direction and speed of $\psi_{N}$ depend on the choice of the initial $N$-sample of the frequencies. Moreover, these simulations are compatible with speeds of order $N^{-1/2}$. In Fig.~\ref{subfig:evoleta limfluct}, trajectories of the fluctuation process $\eta(\sin)$ are sample-dependent and compatible with Fig~\ref{subfig:fluctpsiN intro}. }%
\label{fig:nonself}%
\end{figure}

\subsubsection{Long-time asymptotics of the fluctuation process}

What makes evolution \eqref{eq:SPDE fluctuations limfluct} relevant here is that its solution $\eta$ still captures this disorder-dependent rotation: at least
numerically, one observes trajectories of the process $\eta$ that are compatible with the ones observed for the finite-size system
\eqref{eq:Kmod} (see Figure~\ref{fig:nonself}).

Hence, a way to understand the phenomenon described in \S~\ref{subsubsec:finitesize} is to analyze the dependence of the fluctuation process $\eta$ \eqref{eq:SPDE fluctuations limfluct} in its mean-value $C$ (which, as we said, captures the initial asymmetry of the disorder). The key point of this paper is to understand how different initial conditions in evolution \eqref{eq:SPDE fluctuations limfluct} may lead to distinct approximately linear trajectories of the fluctuation process, as in Figure~\ref{subfig:evoleta limfluct}. 

Namely, in Theorem~\ref{theo:long time evolution eta limfluct}, we prove the following convergence for the solution $\eta$ of \eqref{eq:SPDE fluctuations limfluct}, in an appropriate space of distribution: for fixed $\om$
\begin{equation}
\label{eq:conv intro}
 \frac{\eta_t^{\om}}{t} \substack{\text{in law}\\\xrightarrow{\hspace*{30pt}}\\t\to+\infty} V(\om),
\end{equation}
where the speed $V(\om)$ (which depends on the initial condition $C(\om)$) has an explicit nontrivial law. This result relies on a detailed spectral analysis of the unbounded evolution operator $L_q$ defined in \eqref{eq:L linear qt intro limfluct}, using arguments from perturbation theory of self-adjoint operators (\cite{Kato1995}) and of analytic semigroups (\cite{Lunardi1995, Pazy1983}) and usual techniques about SPDEs in Hilbert spaces (\cite{DaPrato1992}). The main ingredient for this result consists in proving the existence of a nontrivial Jordan block for the eigenvalue $0$ for the operator $L_{q}$, relying on \emph{a priori} estimates on the Dirichlet form associated to $L_{q}$ and an extension of Lax-Milgram Theorem.

\subsection{Conclusion and perspectives}
The main conclusion of this work is that the Kuramoto model is not self-averaging at the level of fluctuations: the dynamics of the quenched fluctuations of \eqref{eq:Kmod} are still disorder-dependent, contrary to the dynamics of the nonlinear Fokker-Planck equation \eqref{eq:fokker planck intro limfluct}. However, in order to derive rigorously the exact speed of the rotation of synchronized solutions described in Figure~\ref{fig:nonself}, it would be necessary to study \eqref{eq:Kmod} on larger time scales (e.g. time scales of order $N$ as in \cite{2012arXiv1209.4537B}). This has not been carried out here and would be a natural perspective for this work.

The notion of self-averaging (or its absence) is crucial in many disordered models of statistical physics and is deeply related to the influence of the disorder on the phase transition in such systems (see e.g. \cite{Pastur:1991fk,ECP1365,MR904135} and references therein). We could not find any previous reference in the literature concerning non self-averaging for models of disordered interacting diffusions. 

One difficulty is that, although both law of large numbers $\nu_{N}\to\nu$ and central limit theorem $\sqrt{N}(\nu_{N}-\nu)\to \eta$ are valid in a rather general setting (see \cite{Fernandez1997, Lucon2011, 1301.6521}), investigating the long-time behavior of the limiting objects $\nu$ and $\eta$ is often very difficult (even well-posedness of the nonlinear Fokker-Planck equation is sometimes problematic, see \cite{Caceres:2011kx, 2012arXiv1211.0299D} for similar models of integrate-and-fire neurons). In that sense, one of the reasons for the popularity of the Kuramoto model is that the stationary solutions of the nonlinear Fokker-Planck equation \eqref{eq:fokker planck intro limfluct} are explicitly computable (see \S~\ref{subsec:mckean} below). Progress has recently been made in the stability analysis of its synchronized stationary solutions (\cite{GLP2011, GPPP2011}). A key point in this analysis is that the Kuramoto model without disorder is \emph{reversible} (\cite{BGP}), whereas reversibility is lost for many interesting neuronal models (e.g. FitzHugh-Nagumo~\cite{0951-7715-25-8-2303}). In that sense, it is remarkable that a similar stability analysis could be performed on the Winfree model of pulse oscillators in the recent work \cite{MR3032864}.

A second difficulty is that one needs to be in a \emph{quenched} set-up in order to see such a non self-averaging phenomenon: the \emph{averaged} Kuramoto model is indeed self-averaging at the level of fluctuations (see~\cite{Lucon2011}).

This work addresses the behavior as $t\to\infty$ of the fluctuation SPDE \eqref{eq:SPDE fluctuations limfluct}. It would be hopeless to review the vast literature (since~\cite{DaPrato1992, MR876085}) on long-time behavior of SPDEs (existence of invariant measures or random attractors have been studied for many models e.g. \cite{doi:10.1080/03605300500357998, MR2922844,MR1427258}). In our framework, the main difficulty of the long-time analysis of fluctuation for interacting diffusions (see e.g.~\cite{MR2912503}) lies in the fact that the dynamics of such systems is deeply related to the linear stability of their equilibria, which is, as we said, often hard to characterize and establish.

Concerning possible generalizations of this work, the results presented here should certainly be applicable to other disordered models of diffusions, provided sufficient information is known about characterization and linear stability of stationary states. In view of the recent work \cite{MR3032864}, the issue of wether or not similar non self-averaging results hold for the Winfree model is an intriguing question and would require further analysis.

\subsection{Organization of the paper}
The paper is organized as follows: in Section~\ref{sec:linstab}, we precise the main set-up for the study of the SPDE \eqref{eq:SPDE fluctuations limfluct} and state the main results. In
particular, Theorem~\ref{th:existence jrodan block intro limfluct} and Theorem~\ref{theo:spectreLq} deal with the spectral properties of the evolution operator $L_{q_t}$ at least when the disorder is small. Secondly, we state the main result of this paper: Theorem~\ref{theo:long time evolution eta limfluct} establishes the linear asymptotics of the fluctuation process solution of \eqref{eq:SPDE fluctuations limfluct}. Section~\ref{sec:jordan block limfluct} is devoted to prove Theorem~\ref{th:existence jrodan block intro limfluct}. In Section~\ref{sec:global localization spectrum L limfluct} we prove Theorem~\ref{theo:spectreLq}, whereas the main result of the paper, Theorem~\ref{theo:long time evolution eta limfluct} is proved in Section~\ref{sec:limit behavior etat limfluct}.
\section{Main definitions and results}
\label{sec:linstab}
\subsection{Long-time analysis of the McKean-Vlasov equation}
\label{subsec:mckean}
Before going into the details of the analysis of the SPDE \eqref{eq:SPDE fluctuations limfluct}, let us recall some results concerning the nonlinear Fokker-Planck equation \eqref{eq:fokker planck intro limfluct}.
\begin{rem}
 \label{rem:invariance continuous model limfluct}\rm
It is immediate to see that \eqref{eq:fokker planck intro limfluct} exhibits the following symmetries: 
\begin{itemize}
 \item \emph{Rotation invariance:} if $q_t(\tta, \om)$ solves \eqref{eq:fokker planck intro limfluct} so does $q_t(\cdot+\tta_0, \om)$ for
any constant $\tta_0\in\Sun$,
\item \emph{Even symmetry:} if $q_t(-\tta, -\om)|_{t=0}=q_t(\tta, \om)|_{t=0}$, then it is true for all $t>0$.
\end{itemize}
In particular, the stationary solutions of \eqref{eq:fokker planck intro limfluct} will share these symmetries (see \eqref{eq:stationary solution introlimfluct}).
\end{rem}
\subsubsection{Synchronization and phase transition}
 As already observed by Sakaguchi (\cite{Sakaguchi1988}), the Kuramoto model exhibits a phase transition: if the coupling strength $K$ is small, the only stationary solution to \eqref{eq:fokker planck intro limfluct} is the \emph{incoherent solution} $q\equiv \frac{1}{2\pi}$, whereas for $K$ sufficiently large, the coupling dominates upon the thermal noise and non-constant stationary solutions exist. It is now well understood (see~\cite{Sakaguchi1988}) that crucial features of evolution \eqref{eq:fokker planck intro limfluct} are captured by the order parameters $r_{t}\geq0$ and $\psi_{t}\in\Sun$ (the continuous equivalents of $(r_{N}, \psi_{N})$ in \eqref{eq:intro order parameters N}) defined by: 
\begin{equation}
\label{eq:order parameter}
r_{t}e^{i\psi_{t}} = \intSR e^{i\tta}q_{t}(\tta, \om)\dd\tta\dd\mu(\om),\quad t\geq0.
\end{equation}
The quantity $r_{t}$ captures the degree of synchronization of a solution (the profile $q_t\equiv\frac{1}{2\pi}$ for example corresponds to $r_t=0$ and represents a total lack of synchronization) and $\psi_{t}$ identifies the center of synchronization: this is true and rather intuitive for unimodal profiles. In this framework, synchronization reads in the existence of nontrivial stationary solutions $q$ to \eqref{eq:fokker planck intro limfluct}: following \cite{Sakaguchi1988}, if $\mu$ is symmetric, any equilibrium in \eqref{eq:fokker planck intro limfluct} may be written as $q(\cdot+\tta_0, \om)$ for any fixed $\tta_0\in\Sun$ where
\begin{equation}
\label{eq:stationary solution introlimfluct}
 q(\tta, \om):= \frac{S(\tta, \om, 2Kr)}{Z(\om, 2Kr)},
\end{equation}
for
\begin{equation}
 S(\tta, \om, x):= e^{G(\tta, \om, x)}\left[ (1-e^{4\pi\om}) \int_{0}^{\tta}{e^{-G(u,\om, x)}\dd u} + e^{4\pi\om}\int_{0}^{2\pi}{e^{-G(u, \om,
x)} \dd u} \right],
\end{equation}
where $G(u, \om, x)= x\cos(u) + 2\om u$, $Z(\om, x) = \intS S(\tta, \om, x)\dd \tta$ a normalization constant. The parameter $r\in[0,1]$ in \eqref{eq:stationary solution introlimfluct} must satisfy the fixed-point relation \eqref{eq:order parameter}:
\begin{equation}
\label{eq:fixed point mu intro limfluct}
 r=\Psi_\mu(2Kr), \quad \text{where}\quad \Psi_\mu(x):= \intR\frac{\intS \cos(\tta) S(\tta, \om, x)\dd\tta}{Z(\om, x)}\mu(\dd\om).
\end{equation}
One can distinguish between two kinds of stationary solutions, depending on admissible solutions $r$ of \eqref{eq:fixed point mu intro limfluct}, :
\begin{itemize}
 \item $r=0$ is always a solution to \eqref{eq:fixed point mu intro limfluct} and corresponds to the constant density $q\equiv \frac{1}{2\pi}$,
\item Any solution $q$ with $r>0$ is called a synchronized solution. An easy calculation of the derivative of $\Psi_\mu(\cdot)$ at $0$ shows that such solutions exist at least when the coupling strength $K$ is greater than $\tilde{K}:=\left( \intR\frac{\mu(\dd\om)}{1+4\om^2} \right)^{-1}$. In that case, due to the rotation invariance (Remark~\ref{rem:invariance continuous model limfluct}), each solution $r>0$ of \eqref{eq:fixed point mu intro limfluct} corresponds to a whole circle of synchronized stationary solutions $\ens{q(\cdot+\tta_0, \om)}{\tta_0\in\Sun}$.
\end{itemize} 

\subsubsection{The case with no disorder}
\label{subsec:non disordered case intro limfluct}
In the non-disordered case ($\mu=\delta_0$), \eqref{eq:fokker planck intro limfluct} reduces to:
\begin{equation}
 \label{eq:fokker planck zero intro limfluct}
\partial_t q_t(\tta) = \half \partial_\tta^2 q_t(\tta) - \partial_{\tta} \left( q_t(\tta)(J \ast q_t)(\tta)\right),
\end{equation}
and any stationary profile can be written as $q_0(\tta+\tta_0)$ for
\begin{equation}
\label{eq:defq_0}
 q_0(\tta):=\frac{e^{2Kr_{0}\cos(\tta)}}{\int_{\Sun}e^{2Kr_0\cos(u)}\dd
u}=\frac{e^{2Kr_{0}\cos(\tta)}}{Z_0(2Kr_0)},
\end{equation}
where $r_0$ solves
\begin{equation}
\label{eq:Psizero limfluct}
r_0=\Psi_0(2Kr_0),\ \text{where}\ \Psi_0(x):=
\frac{\int_{\Sun}\cos(\tta) e^{x\cos(\tta)}\dd\tta}{\int_{\Sun}e^{x\cos(\tta)}\dd\tta}. 
\end{equation}
Here, since $\Psi_0$ is strictly concave (\cite[Lem.~4]{Pearce}) and $\partial_{r_0}\Psi_0(2Kr_0)=K$, the phase transition is obvious: for $K\leq 1$, $r_0=0$ is the only solution to \eqref{eq:Psizero limfluct} and $\frac{1}{2\pi}$ is the only stationary solution whereas for $K> 1$ this solution coexists with a unique (up to rotation) synchronized solution (corresponding to the unique $r_0>0$ solution to \eqref{eq:Psizero limfluct}).

\subsection{The evolution operator $L_{q}$}
The dynamics of the SPDE \eqref{eq:SPDE fluctuations limfluct} as $t\to+\infty$ is deeply linked to the spectral properties of the operator $L_q$ \eqref{eq:L linear qt intro limfluct}. We
will restrict ourselves to the stationary case, that is when $q|_{t=0}=q_t$ is equal to the synchronized (nontrivial) stationary solution $q$ \eqref{eq:stationary solution introlimfluct} of evolution \eqref{eq:fokker planck intro limfluct}. In this case, the object of interest is the stationary version of \eqref{eq:L linear qt intro limfluct}:
\begin{equation}
\label{eq:L linear q intro limfluct}
Lh(\tta, \om):= \half \partial_\tta^2 h(\tta, \om) - \partial_{\tta} \left(h(\tta, \om)\left(
\langle J \ast q\rangle_\mu(\tta) + \om\right) + q(\tta, \om) \langle J\ast h\rangle_\mu(\tta)\right)\, .
\end{equation}
The domain $\mD$ of the operator $L$ is given by:
\begin{equation}
 \label{eq:defdomainD}
\mD:=\ens{h(\tta, \om)}{\forall \om,
\tta\mapsto h(\tta, \om) \in \mC^{2}(\Sun),\ \intSR h(\tta, \om)\dd\tta\mu(\dd\om)=0}.
\end{equation}
\begin{rem}\rm
The choice of the domain $\mD$ of $L$ is crucial for the study of evolution \eqref{eq:SPDE fluctuations limfluct}. One encounters the same operator $L$ for the linear stability of the stationary solution $q$ since the linearized evolution of \eqref{eq:fokker planck intro limfluct} around $q$ is precisely given by $\partial_t h_t = Lh_t$. The natural domain for this latter evolution (see~\cite{GLP2011}) is 
\begin{equation}
\label{eq:domain zero means for all om intro limfluct}
\ens{h(\tta, \om)}{\forall \om,
\tta\mapsto h(\tta, \om) \in \mC^{2}(\Sun),\ \forall \om,\ \int_{\Sun}h(\tta, \om)\dd\tta=0}.
\end{equation}
Indeed for all $\om$, $q(\cdot, \om)$ is a probability density
on $\Sun$ so that perturbing by elements of domain \eqref{eq:domain zero means for all om intro limfluct} enables to remain within
the set of functions with integral $1$ on $\Sun$. Here, evolution \eqref{eq:SPDE fluctuations limfluct} does not live in domain \eqref{eq:domain zero means for all om intro limfluct} since $\eta$ has a nontrivial mean-value $C(\om)$ for fixed $\om$ (recall~\eqref{eq:struct etazero}). We will see that the non self-averaging phenomenon holds in \eqref{eq:defdomainD} and not in \eqref{eq:domain zero means for all om intro limfluct} (see Remark~\ref{rem:domain vs jordan limfluct}).
\end{rem}
\noindent
For the rest of this paper, we fix $K>1$ and we restrict ourselves to the case where
\begin{equation}
\label{eq:assumption mu delta pm omega}
\mu=\half\left( \delta_{-{\om_0}} + \delta_{\om_0} \right),
\end{equation}
where $\om_0>0$ is a fixed parameter. This assumption on $\mu$ appears to be quite restrictive, but generalizing parts of the results we present here to more general distributions $\mu$ does not seem to be straightforward. We refer to \S~\ref{subsec:comments} for a discussion on this topic. 

In what follows, the following standard notations will be used: for an operator $F$, we will denote by $\rho(F)$ the set of all complex numbers $\lambda$ for which $\lambda-F$ is invertible, and  by $R(\lambda, F):= \left( \lambda - F \right)^{-1}$, $\lambda\in\rho(F)$ the resolvent of $F$. The spectrum of $F$ will be denoted as $\sigma(F)$.
\medskip

The first goal of this paper is to state a spectral decomposition of the operator $L$ defined in \eqref{eq:L linear q intro limfluct},
based on perturbation arguments from the non-disordered case $\mu=\delta_0$ (see \S~\ref{subsec:non disordered case intro limfluct} and
\S~\ref{subsec:non disordered case limfluct}). 

\subsection{Distribution spaces}
\label{subsec:weighted sobo limfluct} The spectral analysis of the operator $L$ \eqref{eq:L linear q intro limfluct} will be mostly carried out in spaces of distribution that have
$H^{-1}$ regularity \wrt $\tta$. But the precise study of $L$ requires to introduce weighted version of $H^{-1}$ that we define here. We
first focus on weighted Sobolev spaces of functions $\tta\mapsto h(\tta)$ on $\Sun$ (\S~\ref{subsubsec:weighted sobo
limfluct}) and then introduce the corresponding spaces for functions with disorder $(\tta, \om)\mapsto h(\tta, \om)$ on $\Sun\times \Supp(\mu)$
(\S~\ref{subsubsec:space H}):
\subsubsection{Weighted Sobolev spaces}
\label{subsubsec:weighted sobo limfluct} 
For any bounded positive weight function $k(\cdot)$ on $\Sun$ such that $\intS k(\tta)\dd\tta=1$,
we may consider the space $\Ldk$ closure of $\mC(\Sun)$ \wrt the norm:
\begin{equation}
 \label{eq:norm Ldeux k limfluct}
\Ndk{h}:= \left( \intS \frac{h^2(\tta)}{k(\tta)} \dd\tta\right)^\frac{1}{2}.
\end{equation}
The decomposition of $h$ into the sum of $\Span(k)$ and its orthogonal supplementary in $\Ldk$ may be written as:
\begin{equation}
\label{eq:decomposition h hzero k}
h=\left( \intS h \right)\cdot k + h_0, 
\end{equation}
where $\intS h_0=0$. Since $h_0$ is with zero mean
value, each of its primitives are $2\pi$-periodic. In particular, we can consider $\bH^{-1}_{k}$ the closure of $\mC(\Sun)$ with respect to
the following weighted Sobolev norm:
\begin{equation}
 \label{eq:norm Hmoins 1 weight k limfluct}
\Vert h \Vert_{-1, k}:= \left( \left( \intS h \right)^2 + \intS\frac{\mH_0^2}{k}\right)^{\frac12},
\end{equation}
where $\mH_0$ is the primitive of $h_0$ on $\Sun$ such that $\intS\frac{\mH_0}{k}=0$. Note that one can understand the spaces $\bH^{-1}_{k}$ as part of a \emph{Gelfand-triple} construction (see~\ref{app:gelfand} for a precise definition). In particular, we will make a constant use of the space $\Hsq$ (that is for $k(\cdot)=q_0(\cdot)$ where $q_0$ is the stationary solution \eqref{eq:defq_0} of the non-disordered system) which is the natural space (see Prop.~\ref{prop:BGP}) for the study on the Kuramoto operator $L_{q_0}$ \eqref{eq:defL0} in the non-disordered case.
\begin{rem}
 \label{rem:weighted spaces no disorder limfluct}\rm
In the case of a constant weight $k(\cdot)\equiv \frac{1}{2\pi}$, we will write $(\Ld, \Nd{\cdot})$ and $(\bH^{-1}, \Vert\cdot\Vert_{-1})$
instead of $(\bL^2_{\frac{1}{2\pi}}, \Vert{\cdot}\Vert_{2, \frac{1}{2\pi}})$ and $(\bH^{-1}_{\frac{1}{2\pi}}, \Vert\cdot\Vert_{-1,
\frac{1}{2\pi}})$.
\end{rem}

\subsubsection{Weighted Sobolev spaces (with disorder)} \label{subsubsec:space H} The natural space in which to study the operator $L$ is
the space of functions $h$
in $\mD$ such that each component $h(\cdot, \om)$ lives in a certain $\bH^{-1}_{k(\cdot, \om)}$ for a weight $k(\cdot, \om)$ (which
may depend on $\om\in\Supp(\mu)$). More precisely, for any family of positive weight functions $\left(k(\cdot,
\om)\right)_{\om\in\Supp(\mu)}$, we denote as $\bH^{-1}_{\mu, k}$ the closure of $\mD$ \wrt the norm:
\begin{equation}
 \label{eq:norm Hmoins 1 weight k mu limfluct}
\Vert h \Vert_{\mu, -1, k}:= \left(\intR\Vert h(\cdot, \om) \Vert_{-1, k(\cdot, \om)}^2 \mu(\dd\om) \right)^\frac12=\left( \intR\left( \intS
h\dd\tta\right)^2\dd\mu + \intR\intS\frac{\mH_{0}^2}{k}\dd\tta\dd\mu\right)^{\frac12}.
\end{equation}
We will also consider the analogous averaged weighted $\bL^2$-spaces, that is the space $\Ldkmu$ given by the norm:
\begin{equation}
 \label{eq:Ltwo space averaged limfluct}
\Vert h\Vert_{\mu, 2, k}:= \left( \intR\intS \frac{h(\tta, \om)^2}{k(\tta, \om)}\dd\tta\dd\mu(\om)\right)^\frac12.
\end{equation}
\begin{rem}
 \label{rem:weighted spaces with disorder limfluct}\rm
In the particular case of $k(\cdot, \om)\equiv\frac{1}{2\pi}$ for all $\om\in\Supp(\mu)$, we will write \[\Hsmu:= \bH^{-1}_{\mu,
\frac{1}{2\pi}}\]
and the corresponding norm will be denoted as $\NH{\cdot}$. We will also write $(\Ldmu, \Ndmu{\cdot})$ instead of
$(\bL^2_{\mu, \frac{1}{2\pi}}, \Vert{\cdot}\Vert_{\mu, 2, \frac{1}{2\pi}})$.
\end{rem}
\noindent The main theorem concerning the operator $L$ (Theorem~\ref{theo:spectreLq}) will be stated in $\Hsmu$ for the ease of exposition but its proof will require the introduction of weighted Sobolev spaces $\bH^{-1}_{\mu, k}$ for nontrivial weights $k$. 
\subsection{The non-disordered case}
\label{subsec:non disordered case limfluct}
In the context of the Kuramoto model without disorder, the
linearized operator $L_{q_0}$ around stationary solution $q_0$ (see \S~\ref{subsec:non disordered case intro limfluct}), with domain
\begin{equation}
\label{eq:domain Dzero}
\mD_0:=\ens{u\in\mC^2(\Sun)}{\int_{\Sun}u=0} 
\end{equation}
is:
\begin{equation}
\label{eq:defL0}
 L_{q_0}u:= \half \partial_\tta^2u - \partial_\tta \left[  q_0 (J\ast u) + u(J\ast q_0)\right].
\end{equation}
\noindent In~\cite{BGP}, it is mainly proved that $L_{q_0}$ is essentially self-adjoint in $\Hsq$:
\begin{prop}[{\cite[Th.~1.8]{BGP}}]
\label{prop:BGP}
 $(L_{q_0}, \mD_0)$ is essentially self-adjoint in $\Hsq$. The spectrum of (the self-adjoint extension of) $L_{q_0}$
is pure point lying in $(-\infty, 0)$; $0$ is in the spectrum, with one-dimensional eigenspace (spanned by $\partial_\tta q_0$).
Moreover, the distance $\gapz$ between the eigenvalue $0$ and the rest of the spectrum is strictly positive.
\end{prop}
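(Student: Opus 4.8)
The plan is to exploit the \emph{reversibility} of the disorder-free Kuramoto dynamics: $L_{q_0}$ turns out to be symmetric in $\Hsq$ and a bounded finite-rank perturbation of an explicitly diagonalizable diffusion. First I would use that $q_0$ is a stationary solution of \eqref{eq:fokker planck zero intro limfluct}, so that the flux $\half\partial_\tta q_0-q_0(J\ast q_0)$ is constant on $\Sun$; integrating over $\Sun$ and using that $q_0$ is even shows this constant vanishes, whence $J\ast q_0=\half\partial_\tta\log q_0$. Inserting this into \eqref{eq:defL0} rewrites, for $u\in\mD_0$,
\begin{equation*}
L_{q_0}u=A_0u+B_0u,\qquad A_0u:=\half\,\partial_\tta\!\bigl(q_0\,\partial_\tta(u/q_0)\bigr),\qquad B_0u:=-\partial_\tta\!\bigl(q_0\,(J\ast u)\bigr).
\end{equation*}
An integration by parts gives $(J\ast u)(\tta)=-K\intS\cos(\tta-\vphi)\,U(\vphi)\dd\vphi$ for any $2\pi$-periodic primitive $U$ of $u$, so $B_0$ is a bounded operator on $\Hsq$ of rank at most $2$, while $A_0$ is a uniformly elliptic second-order operator on $\Sun$ with smooth coefficients ($q_0>0$ is smooth).

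Since $\half q_0\,\partial_\tta(u/q_0)$ and $-q_0(J\ast u)$ are primitives of $A_0u$ and $B_0u$ respectively, a short computation in $\Hsq$ gives $\crosq{A_0u}{v}=-\half\intS uv/q_0\,\dd\tta$ and $\crosq{B_0u}{v}=K\intS\intS\cos(\tta-\vphi)\,U(\vphi)V(\tta)\dd\vphi\dd\tta$, both symmetric in $(u,v)$; hence $(L_{q_0},\mD_0)$ is symmetric in $\Hsq$, with Dirichlet form $\mE(u,v):=-\crosq{L_{q_0}u}{v}$ bounded below. The operator $-2A_0$ is the self-adjoint operator associated with the coercive form $(u,v)\mapsto\intS uv/q_0$ in the Gelfand triple $\bL^2_{q_0}\hookrightarrow\Hsq\hookrightarrow(\bL^2_{q_0})'$; as $\Sun$ is compact this embedding is compact, and standard elliptic theory gives that $A_0$ is essentially self-adjoint on $\mD_0$, negative, with compact resolvent and purely discrete spectrum accumulating only at $-\infty$ (and $0\notin\sigma(A_0)$, since $A_0u=0$ forces $u\in\Span(q_0)$, which contradicts $\intS u=0$). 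As $B_0$ is bounded and symmetric, Kato--Rellich then makes $(L_{q_0},\mD_0)$ essentially self-adjoint, with self-adjoint extension on the domain of $\overline{A_0}$, and as $B_0$ is finite-rank, hence $A_0$-compact, Weyl's theorem yields $\sigma_{\mathrm{ess}}(L_{q_0})=\sigma_{\mathrm{ess}}(A_0)=\emptyset$; so $\sigma(L_{q_0})$ is a discrete set of finite-multiplicity eigenvalues accumulating only at $-\infty$, in particular with every point isolated.

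By rotation invariance (Remark~\ref{rem:invariance continuous model limfluct}), $\tta_0\mapsto q_0(\cdot+\tta_0)$ is a one-parameter family of stationary solutions of \eqref{eq:fokker planck zero intro limfluct}; differentiating at $\tta_0=0$, and noting $\partial_\tta q_0\in\mD_0$, gives $L_{q_0}\,\partial_\tta q_0=0$, so $0\in\sigma(L_{q_0})$. What remains is the crux: to prove $\mE(u,u)\geq0$ for all $u\in\mD_0$, with equality exactly on $\Span(\partial_\tta q_0)$; by self-adjointness this gives at once $\sigma(L_{q_0})\subset(-\infty,0]$ and $\Ker L_{q_0}=\Span(\partial_\tta q_0)$. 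Splitting $u$ into its even and odd parts and using that $q_0$ is even, $\mE(u,u)$ decouples into two contributions involving only $\intS\cos(\tta)\,U$ and $\intS\sin(\tta)\,U$; in each sector one is reduced to minimizing $\half\intS u^2/q_0$ under a single prescribed linear functional of a primitive $U$ of $u$. Solving the Lagrange equations with the explicit profile $q_0=e^{2Kr_0\cos\tta}/Z_0(2Kr_0)$ and the identity $\intS q_0\sin^2\tta\,\dd\tta=\tfrac1{2K}$ (obtained by pairing the stationary equation with $\sin$), the minimum is exactly $K\bigl(\intS\cos\tta\,U\bigr)^2$ in the odd sector --- attained only along $\partial_\tta q_0$ --- and $\tfrac1{2\kappa}\bigl(\intS\sin\tta\,U\bigr)^2$ in the even sector, where $\kappa:=1-\tfrac1{2K}-r_0^2$. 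Finally, the Riccati relation $\Psi_0'=1-\tfrac{\Psi_0}{x}-\Psi_0^2$ and the fixed-point equation $r_0=\Psi_0(2Kr_0)$ give $\kappa=\Psi_0'(2Kr_0)$, and the strict concavity of $\Psi_0$ (\cite[Lem.~4]{Pearce}) gives $\Psi_0'(2Kr_0)<\Psi_0(2Kr_0)/(2Kr_0)=\tfrac1{2K}$, that is $\tfrac1{2\kappa}>K$. Hence $\mE\geq0$, vanishing only on $\Span(\partial_\tta q_0)$, so $0$ is a simple eigenvalue and $\sigma(L_{q_0})\setminus\{0\}\subset(-\infty,0)$.

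Together with the previous paragraph, $0$ is an isolated simple eigenvalue of $L_{q_0}$ and $\sigma(L_{q_0})\setminus\{0\}$ is a discrete subset of $(-\infty,0)$, so $\gapz=\mathrm{dist}\bigl(0,\sigma(L_{q_0})\setminus\{0\}\bigr)>0$. The step I expect to be the genuine obstacle is this sharp non-negativity of the Dirichlet form together with the exact description of the one-dimensional kernel: it is there that the self-consistency of the synchronized profile is really used, through the explicit $q_0$ and the concavity of $\Psi_0$. The rest --- the divergence-form rewriting, the symmetry identities, and the spectral bookkeeping (Kato--Rellich and Weyl's theorem) --- is routine once the reversible structure has been exposed.
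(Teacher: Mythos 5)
The paper does not actually prove this statement --- it is quoted verbatim from \cite[Th.~1.8]{BGP} and used as a black box (cf.\ also the reference to \cite[Eq.~(2.47)]{BGP} in the proof of Lemma~\ref{lem:A of trace class limfluct}). So there is no ``paper's own proof'' to compare against; what I can do is assess your argument on its merits.

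Your proof is correct and, as far as the structure goes, it follows the line of attack of \cite{BGP}: rewrite $L_{q_0}$ in divergence form using the stationary relation $J\ast q_0=\tfrac12\partial_\tta\log q_0$, recognise it as a negative elliptic operator $A_0$ plus a symmetric finite-rank perturbation $B_0$ in $\Hsq$, get essential self-adjointness and discreteness by Kato--Rellich and Weyl, get $\partial_\tta q_0\in\Ker L_{q_0}$ from rotation invariance, and then establish non-negativity of the Dirichlet form with the sharp identification of the kernel. Your computations all check: the weighted $\Hsq$ pairing does give $\crosq{A_0u}{v}=-\tfrac12\intS uv/q_0$ and $\crosq{B_0u}{v}=K\intS\intS\cos(\tta-\vphi)U(\vphi)V(\tta)$; the parity decoupling is correct; the identities $\intS q_0\sin^2=\tfrac1{2K}$ and $\Psi_0'(2Kr_0)=1-\tfrac1{2K}-r_0^2$ are correct; and the constrained minimisation gives exactly the stated minima, with the odd-sector minimiser being $c\,q_0\sin\tta=c'\,\partial_\tta q_0$ and the even-sector bound strict because $\Psi_0'(2Kr_0)<\Psi_0(2Kr_0)/(2Kr_0)=1/(2K)$ by the strict concavity from \cite[Lem.~4]{Pearce}.

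Two very small points worth tightening. First, when you pass from ``$\mE(u,u)>0$ for all $u\in\mD_0\smallsetminus\Span(\partial_\tta q_0)$'' to ``$\Ker L_{q_0}=\Span(\partial_\tta q_0)$'' for the self-adjoint closure, you are implicitly invoking elliptic regularity (any eigenvector of the closure is a distributional solution of a second-order elliptic ODE with smooth coefficients on $\Sun$, hence lies in $\mC^\infty(\Sun)\cap\mD_0$); it is routine but should be stated, since otherwise the kernel of the closure could a priori be larger than the kernel on the core. Second, in the odd sector you should note separately the degenerate case $\intS\cos\tta\,U=0$, where the minimiser is $u_{\text{od}}=0$ (still in $\Span(\partial_\tta q_0)$), so that equality in $\mE(u_{\text{od}},u_{\text{od}})\geq 0$ forces $u_{\text{od}}\in\Span(\partial_\tta q_0)$ in all cases. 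Neither is a real gap.
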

\subsection{Non self-averaging phenomenon for the operator $L$ and existence of a Jordan block}
Linear trajectories that depend on the initial condition as observed in Figure~\ref{subfig:evoleta limfluct} are reminiscent of an analogous
deterministic finite-dimensional example: consider the $2$-dimensional evolution $\svect{x'(t)}{y'(t)}= L \svect{x(t)}{y(t)}$, for
$L=\bigl(\begin{smallmatrix}0 & 1\\ 0 & 0\end{smallmatrix}\bigr)$. It is
trivial to see that the solutions of this system are linear in time: $\frac{x(t)}{t}\to y_0$ as $t\to\infty$. The existence of such a Jordan block is precisely equivalent to the existence of $x$ and $y$ such that $Lx=0$ and $Ly=x$. The purpose of the first main theorem of this paper is to prove an analogous existence of a Jordan block for the operator $L$ in \eqref{eq:L linear q intro limfluct}:

\begin{theo}
\label{th:existence jrodan block intro limfluct}
 For any fixed $\om_0>0$, if $q$ is the stationary solution in \eqref{eq:stationary solution introlimfluct}, then 
\begin{equation}
\label{eq:qprime kernel L}
 L\partial_\tta q=0.
\end{equation}
Moreover, there exists $ p\in\mD$ such that \begin{equation}
\label{eq:jordanhq'}
 \forall \tta\in\Sun, \forall  \om\in\Supp(\mu),\ \quad L p(\tta, \om) = \partial_\tta q(\tta, \om).
\end{equation}
In particular, the characteristic space of $L$ in $0$ is \emph{at least} of dimension $2$.
\end{theo}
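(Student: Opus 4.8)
The plan is to prove the two assertions separately, with the second (the existence of $p$ with $Lp=\partial_\tta q$) being the substantial one.

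\medskip
\textbf{Step 1: The identity $L\partial_\tta q=0$.} This should follow by differentiating the stationary Fokker--Planck equation \eqref{eq:fokker planck intro limfluct}. Since $q$ is stationary, $\tfrac12\partial_\tta^2 q - \partial_\tta\big(q(\langle J\ast q\rangle_\mu+\om)\big)=0$ for each $\om$. Differentiating in $\tta$ and comparing with the definition \eqref{eq:L linear q intro limfluct} of $L$, one checks that $L\partial_\tta q$ is exactly $\partial_\tta$ of the left-hand side of the stationary equation, hence zero. One also needs $\partial_\tta q\in\mD$, i.e. $\intSR \partial_\tta q(\tta,\om)\dd\tta\mu(\dd\om)=0$, which is immediate since each $q(\cdot,\om)$ is $2\pi$-periodic. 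The only mild care needed is to track the $\langle J\ast\cdot\rangle_\mu$ terms, which are linear, so differentiation passes through cleanly.

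\medskip
\textbf{Step 2: Constructing $p$ with $Lp=\partial_\tta q$.} The obstacle is that $0\in\sigma(L)$, so one cannot simply invert $L$; one must verify that $\partial_\tta q$ lies in the range of $L$ (equivalently, is orthogonal to the kernel of the adjoint $L^*$, in an appropriate sense) and then produce a genuine element of $\mD$. The strategy is to solve \eqref{eq:jordanhq'} componentwise in $\om$. For $\mu=\tfrac12(\delta_{-\om_0}+\delta_{\om_0})$ there are only two components $p(\cdot,\pm\om_0)$, coupled through the averaged convolution term $\langle J\ast p\rangle_\mu$, which depends only on the combination $\tfrac12(p(\cdot,\om_0)+p(\cdot,-\om_0))$. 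So I would write $L p=\partial_\tta q$ as a system: introduce the unknowns $a_\pm:=\langle J\ast p(\cdot,\pm\om_0)\rangle$ type quantities, or more systematically set $r_p:=\langle J\ast p\rangle_\mu$ as a fixed forcing, solve each scalar elliptic-type equation $\tfrac12 p''(\cdot,\pm\om_0) - \partial_\tta\big(p(\cdot,\pm\om_0)(\langle J\ast q\rangle_\mu+(\pm\om_0)) + q(\cdot,\pm\om_0) r_p\big)=\partial_\tta q(\cdot,\pm\om_0)$ for $p(\cdot,\pm\om_0)$ in terms of $r_p$, and finally close the loop by imposing the self-consistency relation that $r_p$ indeed equals $\langle J\ast p\rangle_\mu$. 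Because $J=-K\sin$, $\langle J\ast p\rangle_\mu$ is a two-dimensional quantity (the $\sin,\cos$ Fourier modes), so the self-consistency condition is a finite linear system; one shows it is solvable. The scalar equations are first-order in the primitive, so each can be integrated explicitly against the stationary profile $q(\cdot,\pm\om_0)$, with solvability of each governed by a single scalar compatibility condition (periodicity of the primitive), which one checks holds by an explicit computation using the structure of $q$ in \eqref{eq:stationary solution introlimfluct}. Alternatively, and more in the spirit of the Gelfand-triple / Lax--Milgram extension mentioned in the introduction, one works in a weighted space $\bH^{-1}_{\mu,k}$ with a well-chosen weight $k$ making $L$ have a coercive-modulo-kernel Dirichlet form, invokes the extension of Lax--Milgram to get a weak solution orthogonal to $\Ker L$, and then bootstraps regularity to land in $\mC^2$, hence in $\mD$.

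\medskip
\textbf{Step 3: Dimension of the characteristic space.} Once \eqref{eq:qprime kernel L} and \eqref{eq:jordanhq'} hold with $\partial_\tta q\neq 0$ (it is not identically zero since $q$ is a genuinely synchronized, non-constant profile for $K>1$), the vectors $\partial_\tta q$ and $p$ are linearly independent: if $p=c\,\partial_\tta q$ then $\partial_\tta q=Lp=cL\partial_\tta q=0$, a contradiction. They satisfy $L\partial_\tta q=0$ and $(L)^2 p = L\partial_\tta q=0$, so both lie in $\bigcup_n\Ker L^n$, the characteristic space at $0$, which therefore has dimension at least $2$. I expect Step 2 — in particular verifying the scalar compatibility conditions and then the closing self-consistency relation, or alternatively making the weighted Lax--Milgram argument rigorous with the right weight and regularity bootstrap — to be the main obstacle; Steps 1 and 3 are short.
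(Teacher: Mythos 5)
Your Steps 1 and 3 match the paper and are essentially complete: \eqref{eq:qprime kernel L} follows from differentiating the stationary equation (rotation invariance), and your linear-independence argument in Step 3 is correct as stated.

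Step 2 contains the substance and, there, your proposal has a genuine gap. Of the two routes you sketch, the second (weighted Lax--Milgram with a regularity bootstrap) is in the spirit of the paper's argument, but you omit the structural device on which the whole variational argument hinges: the paper works not on all of $\bH^{-1}_{\mu,q}$ but on its \emph{odd} subspace $\mO$ (functions with $h(-\tta,-\om)=-h(\tta,\om)$), to which $\partial_\tta q$ belongs and which is preserved by $L$. On $\mO$ the nonlocal term $\langle J\ast\cdot\rangle_\mu$ contributes only a \emph{rank-one} piece (the $\cos$-mode vanishes by oddness), yielding the decomposition $\mE_L=\Gamma+K\,\ell(\cdot)\ell(\cdot)$ of the Dirichlet form. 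The paper then proves weak coercivity of $\Gamma$ by choosing test functions $h=g\mL$ where $g=qf$ and $f$ solves a specific first-order periodic ODE in $\tta$; that ODE has a periodic solution exactly when $\om\neq 0$, which is the precise mechanism by which the Jordan block is a disorder effect (at $\om=0$, $\partial_\tta f=4/q_0$ is incompatible with periodicity). The rank-one functional $\ell$ is then represented as $\crosqqmu{Lp_2}{\cdot}$ for an \emph{explicit} $p_2$, and the correction $p=p_1-K\ell(p_1)p_2$ finally solves the problem. Without the restriction to $\mO$, the nonlocal term has rank two, the decomposition of $\mE_L$ is more complicated, and there is no guarantee that the same Lax--Milgram device closes; your sketch gives no indication of how it would. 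The componentwise ``self-consistency'' route you describe as your first alternative is a genuinely different approach; it is plausible but would require verifying solvability of the scalar compatibility conditions and non-degeneracy of the closing $2\times 2$ linear system, neither of which you carry out, and it would again have to locate where $\om_0>0$ enters essentially. In short, you have the right neighborhood of ideas but are missing the parity reduction and the explicit coercivity/rank-one mechanism that make the argument go through.
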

\begin{rem}
\label{rem:domain vs jordan limfluct}\rm
Equality \eqref{eq:qprime kernel L} is a direct consequence of the rotation invariance in \eqref{eq:Kmod} (Remark~\ref{rem:invariance continuous model limfluct}). Note also that $p(\cdot, \om)$ found in \eqref{eq:jordanhq'} is with nontrivial mean value for all $\om\in\Supp(\mu)$. We believe in fact that $\intS p(\cdot, \om)=-\frac1\om$; this fact is derived from non-rigorous computations and verified by numerical simulations. In other terms, such a $p$ (and the corresponding Jordan block $\bigl(\begin{smallmatrix}0&1\\0&0\end{smallmatrix}\bigr)$ in the matrix representation \eqref{eq:matrix representation L H} of the operator $L$) do not exist on the domain \eqref{eq:domain zero means for all om intro limfluct}.
\end{rem}
\noindent Theorem~\ref{th:existence jrodan block intro limfluct} is proved in Section~\ref{sec:jordan block limfluct}.

\subsection{Spectral properties of $L$ and position of the spectrum}
The second goal of this paper is to prove that $L$ generates an analytic semi-group of operators with spectrum lying in the complex half-plane with negative real part:
\begin{theo}
\label{theo:spectreLq} 
In the Hilbert space $\Hsmu$ defined in Remark~\ref{rem:weighted spaces with disorder limfluct}, the operator $(L, \mD)$ is densely defined,
closable, its
closed extension having compact resolvent. In particular, its spectrum consists of isolated eigenvalues with finite multiplicities.

Moreover, for all $K>1$, for all $\alpha\in(0, \frac{\pi}{2})$, for all $\rho\in(0,1)$, there exists $\om_\star=\om_\star(K, \alpha, \rho)>0$
such that, for all $0<\om_0<\om_\star$, the following is true:

\begin{itemize}
\item The spectrum of $L$ lies in a cone $C_{\alpha}$ with vertex $0$ and angle $\alpha$
\begin{equation}
 C_{\alpha}\, :=\, \ens{\lambda\in\bbC}{\frac{\pi}{2} + \alpha\leq \arg(\lambda)\leq \frac{3\pi}{2}-\alpha}\subseteq \ens{z\in\bC}{\Re(z)\leq
0}\, ;
\end{equation}
\item There exists $\alpha'\in(0, \frac\pi2)$ such that $L$ is the infinitesimal generator of an analytic semi-group defined on a
sector $\Delta_{\alpha'}:=\{\lambda\in\bbC,\, |\arg(\lambda)|< \alpha'\}$;
\item the dimension of the characteristic space in $0$ is \emph{exactly} $2$, spanned by $\partial_\tta q$ and $p$, where $p$ is defined in
Theorem~\ref{th:existence jrodan block intro limfluct},
\item the eigenvalue $0$ is separated from the rest of the spectrum at a distance $\gapL=\lambda(L, K, \rho)$ at least equal to $\rho\cdot \min\left(\gapz, \frac12 e^{-4Kr_0}\right)$, where $L_{q_0}$ and $r_0$ are defined in \S~\ref{subsec:non disordered case intro limfluct}.
\end{itemize}
\end{theo}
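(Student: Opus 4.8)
The plan is to establish Theorem~\ref{theo:spectreLq} by viewing $L$ as an analytic perturbation (in the sense of Kato) of the self-adjoint operator $L_{q_0}$ studied in Proposition~\ref{prop:BGP}, where the small parameter is $\om_0$. First I would set up the correct functional-analytic framework: the operator $L$ acts on functions $h(\tta,\om)$ with $\om\in\{-\om_0,\om_0\}$, so it can be represented as a $2\times2$ matrix of operators acting on pairs $(h(\cdot,\om_0),h(\cdot,-\om_0))$, and one reduces to the diagonal part (which is essentially two copies of a perturbed Kuramoto operator $L_{q_0}$ plus the drift term $\om\partial_\tta$ and the modification of the mean-field convolution term coming from averaging over $\mu$) plus an off-diagonal coupling. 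I would introduce the weighted space $\bH^{-1}_{\mu,k}$ for a carefully chosen weight $k(\cdot,\om)$ — presumably $k(\cdot,\om)=q(\cdot,\om)$, the stationary density — so that the principal part $\tfrac12\partial_\tta^2 - \partial_\tta(\,\cdot\,(\langle J*q\rangle_\mu+\om))$ becomes symmetric (or a bounded perturbation of a symmetric operator) in that inner product, exactly as the non-disordered operator was self-adjoint in $\bH^{-1}_{q_0}$. This requires an \emph{a priori} estimate on the Dirichlet form associated to $L$, showing it is sectorial: $\Re\,\langle -Lh,h\rangle \geq c\|h\|^2 - C\|h\|_{\text{weaker}}^2$ together with a bound on the imaginary part, which yields via an extension of the Lax–Milgram theorem that $L$ is closable with compact resolvent (compactness coming from the compact embedding of the form domain, an $H^1$-type space, into $\bH^{-1}$). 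That gives the first paragraph of the theorem: densely defined, closable, compact resolvent, hence discrete spectrum.

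Next I would prove the quantitative localization of the spectrum for small $\om_0$. The idea is that at $\om_0=0$ the operator is exactly $L_{q_0}$ (two decoupled copies), whose spectrum is real, contained in $(-\infty,-\gapz]\cup\{0\}$ with a one-dimensional kernel spanned by $\partial_\tta q_0$. Turning on $\om_0$ perturbs this. I would use the resolvent identity and the sectoriality estimate to show that on the region $\{\Re z > -\delta\}\setminus B(0,\rho\gapz)$ the resolvent $R(z,L)$ stays uniformly bounded for $\om_0$ small, so no eigenvalues enter that region; the relevant smallness parameter is controlled by the size of $\om\partial_\tta$ and the off-diagonal coupling relative to the spectral gap $\gapz$, and by $\tfrac12 e^{-4Kr_0}$ which presumably is a lower bound on the gap surviving the change of weight from $q_0$ to $q$. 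This pins the whole spectrum into the cone $C_\alpha$ (one must also rule out real positive eigenvalues and complex eigenvalues of small modulus outside the cone, again by the form estimate: the numerical range lies in a sector, and the angle $\alpha$ can be made as close to $\pi/2$ as desired by taking $\om_0$ small). Sectoriality of the numerical range plus compact resolvent then gives, by the standard Hille–Yosida / Lunardi criterion, that $L$ generates an analytic semigroup on some sector $\Delta_{\alpha'}$.

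For the dimension of the characteristic space at $0$: Theorem~\ref{th:existence jrodan block intro limfluct} already gives dimension at least $2$ (with $\partial_\tta q$ in the kernel and $p$ a generalized eigenvector). To get \emph{exactly} $2$, I would use the spectral projection $P_0 = \frac{1}{2\pi i}\oint_{|z|=\rho\gapz} R(z,L)\dd z$ and show, by norm-continuity of this projection in $\om_0$ (which follows from the uniform resolvent bounds just established on that circle), that $\dim \mathrm{Range}(P_0)$ is constant in a neighborhood of $\om_0=0$. At $\om_0=0$ the operator is $L_{q_0}$ on two copies, but one must be careful: the two decoupled kernels would naively give dimension $2$, yet the domain constraint in $\mD$ (total mean zero over $\tta$ \emph{and} $\om$, rather than mean zero for each $\om$) changes the count — I expect the relevant reference operator has a two-dimensional characteristic space at $0$ consisting of one genuine eigenvector and one Jordan vector, matching the perturbed picture. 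The separation estimate $\gapL \geq \rho\min(\gapz,\tfrac12 e^{-4Kr_0})$ then falls out of the contour chosen.

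The main obstacle I anticipate is the first paragraph's functional-analytic groundwork, specifically proving the sectorial \emph{a priori} estimate on the Dirichlet form of $L$ in the weighted $\bH^{-1}_{\mu,k}$ space for the correct weight: the operator is non-self-adjoint (the disorder drift $\om\partial_\tta$ and the averaged convolution break reversibility), so one cannot simply invoke Proposition~\ref{prop:BGP}, and one needs to carefully track how the weight $q(\cdot,\om)$ differs from $q_0$ — controlling that difference, and hence the surviving spectral gap, is exactly where the factor $\tfrac12 e^{-4Kr_0}$ enters. Making the Lax–Milgram extension rigorous on a Gelfand triple with this non-symmetric, $\om$-dependent form, and verifying the compact embedding of the form domain, is the technical heart; once that is in place, the perturbation argument for the spectral localization is comparatively routine, following Kato Chapter~IV and the analytic-semigroup criteria in Lunardi.
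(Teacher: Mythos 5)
Your overall scheme — perturb from a self-adjoint, non-disordered reference operator, deduce compact resolvent, obtain the analytic semigroup from sectorial resolvent bounds, and pin the characteristic space to exactly $2$ by combining spectral-projection continuity with the ``at least $2$'' from Theorem~\ref{th:existence jrodan block intro limfluct} — does match the paper's strategy, but the technical core has a concrete gap exactly where you flag the difficulty. The weight $q(\cdot,\om)$ does \emph{not} make the principal part of $L$ (even approximately) symmetric in $\bH^{-1}_{\mu,q}$: although $q(\cdot,\om)$ is the stationary density, the drift $\langle J\ast q\rangle_\mu(\tta)+\om$ has nonzero circulation on the circle, $\intS\left(\langle J\ast q\rangle_\mu +\om\right)\dd\tta=2\pi\om\ne 0$, so the single-frequency diffusion is \emph{irreversible} for $\om\ne 0$. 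Its Fokker--Planck operator therefore carries an $O(1)$ skew-symmetric part in $\bH^{-1}_q$ that no choice of weight removes, so the ``sectorial a priori estimate on the Dirichlet form of $L$'' you propose as a preliminary step would already contain the whole theorem. The paper side-steps this by \emph{not} attempting to symmetrize $L$ directly: it writes $L=A+B$ where $A$ in \eqref{eq:operator A L limfluct} keeps the reversible non-disordered density $q_0$ everywhere and drops the $\om$-drift, so all irreversibility is packed into $B$ in \eqref{eq:operator B L limfluct}, which is then shown to be $A$-bounded and $A$-compact with $O(\om_0)$ coefficients (Propositions~\ref{prop:Bperturb} and~\ref{prop:B Acompact}).

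A second missing ingredient is how to see even $A$ as self-adjoint, since the averaged convolution $\langle J\ast h\rangle_\mu$ couples $h(\cdot,\om_0)$ and $h(\cdot,-\om_0)$. The paper conjugates by $M=\frac12\left(\begin{smallmatrix}1&1\\1&-1\end{smallmatrix}\right)$, passing to $u=\frac12(h_+ +h_-)$ and $v=\frac12(h_+ -h_-)$; this \emph{decouples} $\tA=MAM^{-1}$ into $\tA_1=L_{q_0}$ acting on $u$ in $\Hsq$ (handled by Proposition~\ref{prop:BGP}) and a pure convolution-free Fokker--Planck operator $\tA_2$ acting on $v$ in $\HsPhi$, with no zero-mean constraint. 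These are genuinely different operators on different domains — not ``two copies of $L_{q_0}$'' — and the constant $\frac12 e^{-4Kr_0}$ in the spectral-gap bound is precisely the gap of $\tA_2$ (Proposition~\ref{prop:A2selfadj}), not a correction from changing the weight from $q_0$ to $q$. Your ``diagonal plus off-diagonal coupling'' picture gestures at this but never supplies the change of basis that actually decouples the convolution; without it, the self-adjointness of the reference operator — on which all of Kato's perturbation theory in your final step rests — is not established.
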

\noindent Note that Theorem~\ref{theo:spectreLq} relies on perturbation arguments of the non-disordered case mentioned in \S~\ref{subsec:non disordered case limfluct}; in particular, the spectral gap $\gapL$ found in Theorem~\ref{theo:spectreLq} depends on the spectral gap $\gapz$ for the non-disordered case.

\medskip

As a consequence of Theorem~\ref{theo:spectreLq}, there exists a decomposition of $\Hsmu$ into the direct sum
\begin{equation}
 \label{eq:spaces projections H limfluct}
\Hsmu= G_0\oplus \Gneg,
\end{equation}
where $G_0$ is of dimension $2$ (spanned by $\partial_\tta q$ and $p$) such that the restriction of the operator $L$ to $G_0$ has spectrum
$\{0\}$ and the
restriction of $L$ to $\Gneg$ has spectrum $\sigma(L)\smallsetminus\{0\}\subseteq \ens{\lambda\in\bC}{\Re(\lambda)<0}$. We will denote as $P_0$
the corresponding projection on $G_0$ along to $\Gneg$, and $\Pneg=1-P_{0}$. In particular, there exist unique continuous linear forms
$\ell_{\partial_\tta q}$ and $\ell_p$ such that for all $h\in \Hsmu$
\begin{equation}
 \label{eq:ell projection P0}
P_0h := \ell_{\partial_\tta q}(h) \partial_\tta q + \ell_p(h) p.
\end{equation}
\noindent To fix ideas, one may think of the following infinite matrix representation for the operator $L$:
\begin{equation}
 \label{eq:matrix representation L H}
\begin{blockarray}{cccccccccccl}
\begin{block}{cc(c|c)c(cc|ccc)cl}
 &&\multirow{2}{*}{$\Pzero L \Pzero$}&\multirow{2}{*}{$\Pzero L \Pneg$}&& 0
&1&\BAmulticolumn{3}{c}{\ell_{\partial_\tta q}\left(L\Pneg\right)}&&\multirow{2}{*}{\hspace{-20pt}$\left.\begin{array}{c}
                                                                             \} \partial_\tta q\\\}p
                                                                             \end{array}
\right\}G_0$}\\
 &&&&&0 &0&0&\ \ \cdots&0 \\[3pt]\cline{3-4}\cline{6-10}
L&=&\multirow{3}{*}{$\Pneg L \Pzero$}&\multirow{3}{*}{$\Pneg L\Pneg$}&=&0&0&\BAmulticolumn{3}{c}{\multirow{3}{*}{$\quad\Pneg
L\Pneg\quad$}}&&\multirow{2}{*}{\hspace{-15pt}$\left.\begin{array}{c}
                                                                              \makebox[14pt][c]{}\\[18pt]\mbox{}
                                                                             \end{array}
\right\}\Gneg$}\\
&&&&&\vdots&\vdots&\\
&&&&&0 & 0  &   \\
  \end{block}
&&\underbrace{\vspace{-15pt}\makebox[40pt][c]{}}_{\substack{G_0=\\
\Span(\partial_\tta q,p)}}&\underbrace{\vspace{-15pt}\makebox[2cm][c]{}}_{\Gneg}
&&\underbrace { \vspace { -10pt } } _ { \partial_\tta q } &\underbrace{\vspace{-10pt}}_{p}
&\BAmulticolumn{3}{c}{\underbrace{\vspace{-10pt}\makebox[2cm][c]{}}_{\Gneg}}\\
\end{blockarray}
\end{equation}
Note that the second line in the matrix representation \eqref{eq:matrix representation L H} of $L$ is indeed equally zero since for all
$h\in \Hsmu$, $Lh$ is of zero mean value on $\Sun$; in particular $\ell_p(Lh)=0$ for all $h\in \Hsmu$.
\begin{rem}
 \label{rem:expression of lp}\rm
Any element $h=\left(h(\tta, \om)\right)_{\tta\in\Sun, \om\in\Supp(\mu)}$ can be identified in our binary case \eqref{eq:assumption
mu delta pm omega} with a couple $(\hp(\tta), \hm(\tta))_{\tta\in\Sun}$. Moreover, any $h\in \Hsmu$ can be decomposed according to
\eqref{eq:spaces projections H limfluct}:
\begin{equation*}
 h= \ell_{\partial_\tta q}(h) \partial_\tta q + \ell_p(h) p + \Pneg h.
\end{equation*}
Let us integrate the latter decomposition \wrt $\tta$. Since $\intS Lu=0$ for all $u\in\mD$, we have $\intS\Pneg h=0$ so that one can actually
find an explicit formulation for the functional $\ell_p$:
\begin{equation}
 \label{eq:expression of lp}
\ell_p(h)=\frac{\intS h_+}{\intS p_+}= \frac{\intS h_-}{\intS p_-}.
\end{equation}
The last equality in \eqref{eq:expression of lp} is due to the fact that $\intS \left(\hp+ \hm \right)=\intS \left(p_++ p_- \right)=0$.
\end{rem}
\subsection{Long time evolution of the fluctuation SPDE}

We now turn to the main result of the paper, which concerns the asymptotic behavior of the fluctuation process $\eta$ defined in
\eqref{eq:SPDE fluctuations limfluct}:

\begin{theo}
\label{theo:long time evolution eta limfluct}
Under the hypothesis of Theorem~\ref{theo:spectreLq}, there exists a unique weak solution $\eta$ to \eqref{eq:SPDE fluctuations limfluct} in
$\Hsmu$. Moreover, $\eta$ satisfies the following asymptotic linear behavior: for fixed initial condition $\eta_0^\om=X+C(\om)$, there exists $v(\om)\in\bR$ such that
\begin{equation}
 \label{eq:linear behavior eta}
\frac{\eta_t}{t}\substack{\text{in law}\\ \xrightarrow{\hspace*{30pt}}\\t\to\infty}=v(\om)
\partial_\tta q,\quad \text{as $t\to+\infty$}.
\end{equation}
Moreover, $\om\mapsto v(\om)$ is a Gaussian random variable with variance
\begin{equation}
\label{eq:variance speed intro}
 \sigma_v^2:= \left( 2\intS p_+(\tta)\dd\tta \right)^{-2},
\end{equation}
where $p_+(\tta):=p(\tta, \om_0)$ is defined by \eqref{eq:jordanhq'}.
\end{theo}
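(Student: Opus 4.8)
The plan is to decompose $\eta_t$ along the splitting $\Hsmu = G_0 \oplus \Gneg$ from \eqref{eq:spaces projections H limfluct}, show that the $\Gneg$-component stays bounded while the $G_0$-component grows linearly, and identify the slope explicitly. First I would establish existence and uniqueness of a weak solution $\eta$ to \eqref{eq:SPDE fluctuations limfluct} in $\Hsmu$: since by Theorem~\ref{theo:spectreLq} the operator $(L,\mD)$ generates an analytic semigroup $(e^{tL})_{t\geq 0}$ on $\Hsmu$, the mild/weak solution can be written via the stochastic convolution
\begin{equation*}
\eta_t = e^{tL}\eta_0 + \int_0^t e^{(t-s)L}\dd W_s,
\end{equation*}
and well-posedness follows from the standard Hilbert-space SPDE theory of \cite{DaPrato1992} once one checks the covariance operator of $W$ (given by \eqref{eq:W process intro limfluct}) is trace-class on $\Hsmu$, using that $q$ is a fixed smooth stationary density bounded below.

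Next I would split $\eta_t = \Pzero \eta_t + \Pneg \eta_t$. On $\Gneg$ the generator $\Pneg L \Pneg$ has spectrum in $\{\Re\lambda < 0\}$ with a spectral gap, so $\|e^{tL}\Pneg\|$ decays exponentially; hence $\Pneg\eta_t = e^{tL}\Pneg\eta_0 + \int_0^t e^{(t-s)L}\Pneg\dd W_s$ converges in law (as $t\to\infty$) to a centered Gaussian limit with bounded variance, and in particular $\Pneg\eta_t / t \to 0$ in law. On $G_0$, the matrix representation \eqref{eq:matrix representation L H} shows $\Pzero L \Pzero$ acts as the nilpotent block $\bigl(\begin{smallmatrix}0&1\\0&0\end{smallmatrix}\bigr)$ in the basis $(\partial_\tta q, p)$, so writing $\Pzero \eta_t = a_t \partial_\tta q + b_t p$ with $a_t = \ell_{\partial_\tta q}(\eta_t)$, $b_t = \ell_p(\eta_t)$, the reduced dynamics is, up to the coupling term $\ell_{\partial_\tta q}(L\Pneg)$ acting on the (bounded) $\Gneg$-part and the projected noise,
\begin{equation*}
\dd b_t = \ell_p(\dd W_t), \qquad \dd a_t = b_t \dd t + \ell_{\partial_\tta q}(L\Pneg \eta_t)\dd t + \ell_{\partial_\tta q}(\dd W_t).
\end{equation*}
But $\ell_p(Lh)=0$ for all $h$ (the second row of \eqref{eq:matrix representation L H} vanishes), and from the covariance \eqref{eq:W process intro limfluct} one computes that $t\mapsto \ell_p(W_t)$ has \emph{zero} quadratic variation in fact — more precisely $b_t$ is time-independent: $b_t = \ell_p(\eta_0)$. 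Indeed applying $\ell_p$ to \eqref{eq:SPDE fluctuations limfluct} kills the drift term, and using \eqref{eq:expression of lp}, $\ell_p(W_t) = \intS (W_t)_+ / \intS p_+$; since $W_t(\vphi)$ depends on $\vphi$ only through $\partial_\tta \vphi$, testing against a constant gives $\intS (W_t)_+ = 0$, so $b_t \equiv \ell_p(\eta_0)$.

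Therefore $a_t = a_0 + t\, \ell_p(\eta_0) + (\text{bounded-in-law terms})$, giving $a_t/t \to \ell_p(\eta_0)$ in law, and consequently $\eta_t / t \to v(\om)\partial_\tta q$ in law with $v(\om) = \ell_p(\eta_0^\om)$. To finish, using \eqref{eq:struct etazero} $\eta_0^\om = X + C(\om)$ and the formula \eqref{eq:expression of lp}, $v(\om) = \ell_p(X + C(\om)) = \intS (X+C(\om))_+ / \intS p_+$; since $X$ is centered Gaussian and $C$ has covariance \eqref{eq:covariance C}, $v$ is Gaussian, and a direct computation of $\bbE[v^2]$ using $\Gamma_C$ against the constant test function on $\Sun$ (together with the normalization of $\gamma$) yields the variance $\sigma_v^2 = (2\intS p_+)^{-2}$ in \eqref{eq:variance speed intro}. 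The main obstacle I anticipate is the rigorous handling of the $\Gneg$-part: one must verify that the stochastic convolution $\int_0^t e^{(t-s)L}\Pneg\dd W_s$ genuinely converges in law in the distribution space $\Hsmu$ — this requires controlling the analytic semigroup on a non-self-adjoint perturbation and checking trace-class/Hilbert–Schmidt bounds uniformly in $t$, which is where the spectral gap estimate $\gapL$ from Theorem~\ref{theo:spectreLq} and the perturbative control of $e^{tL}\Pneg$ relative to the self-adjoint non-disordered semigroup really get used; the $G_0$-analysis by contrast is essentially the finite-dimensional nilpotent picture sketched before Theorem~\ref{th:existence jrodan block intro limfluct}.
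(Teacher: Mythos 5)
Your proposal takes essentially the same route as the paper: the spectral decomposition $\Hsmu = G_0\oplus\Gneg$ from Theorem~\ref{theo:spectreLq}, the nilpotent Jordan block on $G_0$, the key observation that $\ell_p$ annihilates both the drift $L\eta_t$ and the noise $W_t$ (since $W$ lives in the zero-mean subspace $\Hz$, so testing against $\ind{\om=\om_0}$ gives $0$), hence $b_t\equiv\ell_p(\eta_0)$, and the identification $v(\om)=\ell_p(\eta_0^\om)$ followed by the variance computation from $\Gamma_C$. The paper presents this by splitting $\eta_t=T_L(t)\eta_0+W_L(t)$ and handling the deterministic and stochastic pieces separately (Propositions~\ref{prop:linear behavior Pstrict eta} and~\ref{prop:linear behavior Pzero eta}), whereas you project $\eta_t$ by $\Pzero$ and $\Pneg$ all at once; the two presentations are equivalent. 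The well-posedness step via trace-class covariance and the mild-solution formula is also the same as the paper's.

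The one place you move too fast is the cross-coupling term. You fold $\int_0^t\ell_{\partial_\tta q}(L\Pneg\eta_s)\dd s$ into ``bounded-in-law terms'', but this integral is not a priori bounded: since $\Pneg\eta_s$ is only $O(1)$ in law and does not vanish as $s\to\infty$, the crude estimate on the time-integral is $O(t)$ --- the same order as the leading term $t\,\ell_p(\eta_0)$. To get $o(t)$ you have to split $\Pneg\eta_s$ into its two sources: the piece $T_{\Pneg L\Pneg}(s)\Pneg\eta_0$ decays exponentially via the spectral gap $\gapL$, so its time-integral is $O(1)$ (this is exactly what makes the deterministic Proposition~\ref{prop:linear behavior Pstrict eta} work); but the stochastic-convolution piece $\int_0^s T_{\Pneg L\Pneg}(s-r)\Pneg\,\dd W_r$ is an Ornstein--Uhlenbeck-type process on $\Gneg$ which converges to a nontrivial stationary Gaussian law and does \emph{not} decay, so here one must use that it is \emph{centred} together with the exponential decay of its time-correlations (again coming from $\gapL$) to show that $\int_0^t\ell_{\partial_\tta q}(L\Pneg\eta_s)\dd s$ is $O(\sqrt{t})$ in $L^2$ rather than $O(t)$. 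This centering/mixing argument is the only genuinely delicate point in the asymptotics and your sketch does not isolate it. A secondary, smaller point: in identifying $v(\om)$ you should note explicitly that $\intS X_+=0$ identically (the $X$ part of $\eta_0^\om$ vanishes on test functions depending only on $\om$); otherwise it is not clear why only $\Gamma_C$, and not the covariance of $X$, enters the variance $\sigma_v^2=\bigl(2\intS p_+\bigr)^{-2}$.
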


\subsection{Comments on Theorem~\ref{theo:long time evolution eta limfluct}}
\label{subsec:comments}
\subsubsection{Initial asymmetry of the disorder}
\label{subsec:asymdisor}
As we will see in the proof of Theorem~ \ref{theo:long time evolution eta limfluct}, the speed $v(\om)$ in \eqref{eq:linear behavior eta} depends explicitly on the mean-value of the initial condition $C(\om)$ (recall \eqref{eq:struct etazero}): $\eta_{0}^{\om}= X+ C(\om)$. Let us be more explicit on this dependence. At time $t=0$, for $N\geq1$ and $\vphi:\Sun\times\bR\to\bR$, $\eta_{N,0}(\vphi)$ defined by \eqref{eq:fluctuations process intro limfluct} may be written as
\begin{align}
 \eta_{N,0}(\vphi)&= \frac{1}{\sqrt{N}}\Som{j}{1}{N}{\left(\vphi(\tta_j, \om_j) - \intSR \vphi(\tta,
\om)\gamma(\dd\tta)\mu(\dd\om)\right)},\nonumber\\
&=\frac{1}{\sqrt{N}}\Som{j}{1}{N}{\left(\vphi(\tta_j, \om_j) - \intS \vphi(\tta,
\om_j)\gamma(\dd\tta)\right)}\nonumber\\&+\frac{1}{\sqrt{N}}\Som{j}{1}{N}{\left(\intS \vphi(\tta, \om_j)\gamma(\dd\tta)-\intSR \vphi(\tta,
\om)\gamma(\dd\tta)\mu(\dd\om)\right)},\label{eq:def CN}\\
&:= X_N(\vphi) + C_N(\vphi).\nonumber
\end{align}
The process $X_N$ captures the initial fluctuations of the rotators whereas $C_{N}$ captures the fluctuations of the disorder. It is easily seen that $C_{N}$ converges in law (\wrt the disorder) to the process $C$ with covariance given by \eqref{eq:covariance C}. As we will see in the proof of Theorem~\ref{theo:long time evolution eta limfluct}, $v(\cdot)$ actually depends on the process $C_+$ (indexed by functions $\psi:\Sun\to\bR$) that is the restriction of the process $C$ to the component on $+\om_0$ (recall \eqref{eq:assumption mu delta pm omega}):
\begin{equation}
 \label{eq:Cplus}
\forall \psi:\Sun\to\bR,\ C_{+,\psi}:=C_{\psi \ind{\om=\om_0}}.
\end{equation}
Thanks to \eqref{eq:def CN}, $C_+$ is the limit in law of the microscopic process $C_{N,+}$ defined by
\begin{equation}
\label{eq:intro asymmetry disorder}
 \forall \psi,\ C_{N,+}(\psi):= \left(\intS \psi(\cdot)\dd\gamma\right)
\frac{1}{\sqrt{N}}\Som{i}{1}{N}{\left(\ind{(\om_i=\om_0)}-\frac12\right)}:=\left(\intS \psi(\cdot)\dd\gamma\right)\frac{\alpha_N}{\sqrt{N}}.
\end{equation}
Here, $\alpha_N$ is exactly the (centered) number of frequencies among $(\om_1, \ldots, \om_N)$ that are positive, so that $C_{N,+}$ captures the lack of symmetry of the initial chosen disorder: $\alpha_N>0$ (resp. $\alpha_N<0$) represents the case of an asymmetry in favor of positive (resp. negative) frequencies. 

In~\cite[\S~10.2, p.~47]{Balmforth2000}, it is observed numerically that if we get rid artificially\footnote{for $N\geq1$, choose $2N$ frequencies by sampling the first $N$ according to $\mu$ and choose the $N$ remaining frequencies as the exact opposite of the first ones.} of the asymmetry between frequencies, there is no rotation in \eqref{eq:Kmod}, no matter how the frequencies are sampled.  We actually retrieve this phenomenon in Theorem~\ref{theo:long time evolution eta limfluct} in the case where $\mu=\frac12\left( \delta_{-\om_0}+\delta_{\om_0} \right)$, since in that case the quantity $\alpha_{2N}$ in \eqref{eq:intro asymmetry disorder} is equally zero for all $N\geq1$ and so is the consequent limit speed $v$.

\subsubsection{Perspectives}
One could hope to generalize the results of the paper in at least two directions. Firstly, we have restricted ourselves to the binary case $\mu=\frac12(\delta_{-\om_0}+ \delta_{\om_0})$. Note that the proof of Theorem~\ref{th:existence jrodan block intro limfluct} concerning the existence of a Jordan block (although written in this particular case for the reader's convenience) is not specific to this case: one could easily rewrite the same proof for more general distributions $\mu$ (even with unbounded support), satisfying appropriate integrability conditions in $0$ and in $\infty$. 

The main restriction on $\mu$ concerns Theorem~\ref{theo:spectreLq}: the hypothesis $\mu=\frac12(\delta_{-\om_0} +\delta_{\om_0})$ is critical
 for its proof. Indeed, the key argument of the proof is based on the fact that perturbing a
finite dimensional kernel of an operator $A$ by a sufficiently small perturbation $B$ leads to a kernel for the operator $A+B$ with the same finite dimension. But for distributions more general than \eqref{eq:assumption mu delta pm omega}, the kernel of $L$ is likely to become of infinite dimension, so that similar perturbation arguments cannot be applied. 

Secondly, Theorem~\ref{theo:spectreLq} is only proved for small disorder $\om_0$ whereas one would expect it to be true even for large
disorder. It is indeed natural to believe that the non self-averaging phenomenon seen in Figure~\ref{fig:nonself} not only holds for
large disorder but would even be more noticeable in that case. However, since Theorem~\ref{theo:spectreLq} relies on perturbation arguments, proving similar results for large $\om_0$ seems to require alternative methods.

\section{On the existence of a Jordan block for $L$ (Proof of Theorem~\ref{th:existence jrodan block intro limfluct})}

\label{sec:jordan block limfluct}
The purpose of this section is to prove Theorem~\ref{th:existence jrodan block intro limfluct}, i.e.  the fact that the operator $L$ defined in \eqref{eq:L linear q intro limfluct} has a Jordan block of size at least $2$. The symmetry of the system (Remark~\ref{rem:invariance continuous model limfluct}) leads to consider the set of distributions which are odd \wrt $(\tta, \om)\in\Sun\times\Supp(\mu)$:
\begin{equation}
\label{eq:mO odd functions}
\mO:=\ens{h}{\forall (\tta, \om)\in\Sun\times\Supp(\mu),\ h(-\tta, -\om)=-h(\tta,
\om)}.
\end{equation} 
We also denote by $\mN$ the set of functions with zero mean-value for all $\om\in\Supp(\mu)$ (recall the definition of $\mD$ in
\eqref{eq:defdomainD}):
\begin{equation}
 \label{eq:mN zero mean value limfluct}
\mN:=\ens{h\in\mD}{\forall \om\in\Supp(\mu), \intS h(\tta, \om)\dd\tta=0}.
\end{equation}
In the following straightforward lemma, whose proof is left to the reader, we sum-up the basic properties of the
stationary solution $q$ \eqref{eq:stationary solution
introlimfluct} and the operator $L$ \eqref{eq:L linear q intro limfluct}:
\begin{lem}
\label{lem:estimqLq}
The following statements are true:
 \begin{enumerate}
\item $\partial_\tta q\in\mO\cap\mN$,
\item \label{it:muzero} If $h\in \mO$ then $Lh \in \mO$,
\item For all $h\in\mD$, $Lh\in\mN$,
\item There exist $0<c<C$ such that for all $\tta\in\Sun$, $\om\in\Supp(\mu)$, $0<c\leq q(\tta, \om)\leq C$,
\item For all $\tta\in\Sun$, $\om\in\Supp(\mu)$,
\begin{equation}
\label{eq:integrated version of stationary equation limfluct}
 \half \partial_\tta q(\tta, \om) = q(\tta, \om)\left(  \langle J\ast q\rangle_\mu +\om\right) + \kappa(\om)\, ,
\end{equation}
where $\kappa(\om)=\frac{1 - e^{4\pi\om}}{2Z(\om)}$, and $Z(\om)=Z(\om, 2Kr)$ is the normalization constant defined in \eqref{eq:stationary
solution introlimfluct}.
 \end{enumerate}
\end{lem}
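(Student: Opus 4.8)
The plan is to verify the five items by direct computation, using only the symmetries of \eqref{eq:fokker planck intro limfluct} recalled in Remark~\ref{rem:invariance continuous model limfluct} and the explicit formula \eqref{eq:stationary solution introlimfluct} for $q$. Items (1) and (4) are immediate. Since $q$ is the symmetric stationary solution, $q(-\tta,-\om)=q(\tta,\om)$; differentiating in $\tta$ gives $\partial_\tta q(-\tta,-\om)=-\partial_\tta q(\tta,\om)$, i.e. $\partial_\tta q\in\mO$, while $\intS\partial_\tta q(\tta,\om)\dd\tta=0$ for each $\om$ by $2\pi$-periodicity, i.e. $\partial_\tta q\in\mN$. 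For (4), for each of the two values $\om\in\Supp(\mu)$ the map $\tta\mapsto q(\tta,\om)$ is a continuous, strictly positive probability density on the compact circle: strict positivity is read off \eqref{eq:stationary solution introlimfluct}, since the bracket defining $S(\tta,\om,2Kr)$ is bounded below, uniformly in $\tta$, by a positive multiple of $\int_0^{2\pi}e^{-G(u,\om,2Kr)}\dd u>0$; hence $q$ is bounded between two positive constants.

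For item (5) I would integrate the stationary equation once. Writing the stationary version of \eqref{eq:fokker planck intro limfluct} as $\partial_\tta\!\big(\half\partial_\tta q - q\,(\langle J\ast q\rangle_\mu+\om)\big)=0$, the $2\pi$-periodicity in $\tta$ forces the quantity in parentheses to be constant in $\tta$, say $-\kappa(\om)$; this is precisely the stationary probability current, and it visibly depends only on $\om$. To identify $\kappa(\om)$ I would use that, after expanding $\sin$ and discarding the odd contribution by the symmetry of $\mu$, one has $\langle J\ast q\rangle_\mu(\tta)=-Kr\sin\tta$ (with $r$ the order parameter from \eqref{eq:order parameter}), so that $2\big(\langle J\ast q\rangle_\mu(\tta)+\om\big)=\partial_\tta G(\tta,\om,2Kr)$; plugging the primitive form \eqref{eq:stationary solution introlimfluct} of $q$ into $\half\partial_\tta q - q\,(\langle J\ast q\rangle_\mu+\om)$ then makes this expression telescope to the constant $\tfrac{1-e^{4\pi\om}}{2Z(\om)}$, which is the claimed value of $\kappa(\om)$.

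For items (3) and (2), which are the ones actually used in the rest of Section~\ref{sec:jordan block limfluct}: (3) follows by integrating $Lh$ over $\Sun$, where $\half\intS\partial_\tta^2 h\,\dd\tta$ and $\intS\partial_\tta(\cdots)\dd\tta$ both vanish by $2\pi$-periodicity, so $\intS Lh(\tta,\om)\dd\tta=0$ for every $\om$. For (2) the one fact worth recording is a parity rule for the interaction term: if $h\in\mO$, then $\intR h(\cdot,\om)\mu(\dd\om)$ is odd in $\tta$ (a jointly odd function averaged against the symmetric $\mu$), so its convolution with the odd kernel $\sin(\cdot)$, that is $\langle J\ast h\rangle_\mu$, is \emph{even} in $\tta$; likewise $\langle J\ast q\rangle_\mu$ is odd in $\tta$. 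Granting this, a term-by-term check shows $\partial_\tta^2 h$ is odd, $h\,(\langle J\ast q\rangle_\mu+\om)$ is even in $(\tta,\om)$ (odd times odd) so that its $\tta$-derivative is odd, and $q\,\langle J\ast h\rangle_\mu$ is even in $(\tta,\om)$ (even times even) so that its $\tta$-derivative is odd; hence $Lh\in\mO$. I expect no genuine obstacle — the lemma is elementary, which is why the paper leaves it to the reader — the only place calling for a little care being the sign bookkeeping in the flux computation of item (5), where one should exploit the identity $\partial_\tta G=2(\langle J\ast q\rangle_\mu+\om)$ rather than try to integrate $q\,(\langle J\ast q\rangle_\mu+\om)$ over $\Sun$ by hand.
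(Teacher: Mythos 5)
The paper leaves this lemma's proof to the reader, so there is no internal proof to compare against; your blind proof is the natural, expected computational route. All five items are argued correctly: (1) and (2) follow the parity bookkeeping from the even symmetry $q(-\tta,-\om)=q(\tta,\om)$ and the symmetry of $\mu$; (3) is the vanishing of $\intS\partial_\tta(\cdot)\dd\tta$ by periodicity; (4) follows since for $\om>0$ the bracket in $S$ is bounded below by $\intS e^{-G}>0$ (and by $e^{4\pi\om}\intS e^{-G}$ for $\om<0$), uniformly on the finite $\Supp(\mu)$; and for (5) your identification $\partial_\tta G=2(\langle J\ast q\rangle_\mu+\om)$ combined with $\partial_\tta S=\partial_\tta G\cdot S+(1-e^{4\pi\om})$ does indeed make $\half\partial_\tta q - q(\langle J\ast q\rangle_\mu+\om)$ collapse to the constant $\tfrac{1-e^{4\pi\om}}{2Z(\om)}$. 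One small point worth making explicit if you write this up: the identity $\langle J\ast q\rangle_\mu(\tta)=-Kr\sin\tta$ uses the convention $\psi=0$ implicit in the choice $\tta_0=0$ in \eqref{eq:stationary solution introlimfluct}, so that $\intSR\sin(\tta)q\,\dd\tta\dd\mu=0$.
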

\noindent The fact that $\partial_\tta q\in\mO$ can be seen as a consequence of Remark~\ref{rem:invariance continuous model limfluct}. A direct calculation shows that $\partial_\tta q$ is in the kernel of $L$ (it corresponds to the rotation invariance of the problem). The rest of this section is devoted to prove the existence of an element $p\in\mD$ such that $Lp=\partial_\tta q$.

We recall here the definition of the weighted Sobolev spaces introduced in \S~\ref{subsubsec:space H}: we use here the spaces $\left(\bH^{-1}_{\mu, q}, \crosqqmu{\cdot}{\cdot}\right)$ defined in \eqref{eq:norm Hmoins 1 weight k mu limfluct} in the case of $k=q$ and $\left(\Ldmu, \crodmu{\cdot}{\cdot}\right)$ defined in Remark~\ref{rem:weighted spaces with disorder limfluct}. The main result is
the following:

\begin{prop}
\label{prop:jordLq}
For every $\om_0>0$, in the binary case \eqref{eq:assumption mu delta pm omega}, for every $v\in \bH^{-1}_{\mu, q}\cap \mO$ (and in particular
for
$v=\partial_\tta q$), there exists some $p\in \Ldmu\cap\mO$
such that
\begin{equation}
\label{eq:variational form for jordan limfluct}
 \forall l\in \bH^{-1}_{\mu, q}\cap \mO,\ \crosqqmu{L p}{l} = \crosqqmu{v}{l}.
\end{equation}
Moreover, in the case $v=\partial_\tta q$, any $p$ that satisfies \eqref{eq:variational form for jordan limfluct} is in fact a regular function
($p(\cdot, \om)\in\mC^\infty(\Sun)$ for all $\om\in\Supp(\mu)$) and is a classical solution to \eqref{eq:jordanhq'}.
\end{prop}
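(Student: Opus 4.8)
The plan is to solve the equation $Lp = v$ by a variational (Lax--Milgram type) argument in the Gelfand triple $\bH^{-1}_{\mu,q}\subseteq\Ldmu\subseteq\bH^{-1}_{\mu,q}$ restricted to the closed subspace of odd functions, exploiting the fact that $\partial_\tta q$ (the kernel of $L$) is removed once we work modulo the symmetry. First I would compute the Dirichlet form associated to $L$: integrating by parts and using the explicit identity \eqref{eq:integrated version of stationary equation limfluct} from Lemma~\ref{lem:estimqLq}, one writes $\crosqqmu{Lh}{h}$ in terms of $\intR\intS \tfrac{(\partial_\tta \mathcal{H})^2}{q}$ (with $\mathcal{H}$ a suitable primitive) plus lower-order and coupling terms, and shows it is \emph{coercive} on the odd subspace: there exist $c_1>0$, $c_2\in\bR$ with $\crosqqmu{(L-c_2)h}{h}\leq -c_1\NsQmu{h}^2$ for $h\in\mD\cap\mO$. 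The point of restricting to $\mO$ (and of the binary assumption \eqref{eq:assumption mu delta pm omega}) is that on the orthogonal complement of $\partial_\tta q$ — which, by Proposition~\ref{prop:BGP} in the non-disordered case and by the odd-symmetry bookkeeping here, is exactly what is killed — the form has a strictly negative spectral gap, so no shift $c_2$ is actually needed and $L$ itself is coercive on $\bH^{-1}_{\mu,q}\cap\mO$.

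Next I would invoke a version of the Lax--Milgram theorem adapted to the non-symmetric, unbounded setting (as announced in the introduction: "an extension of Lax--Milgram Theorem") to produce, for the given $v\in\bH^{-1}_{\mu,q}\cap\mO$, a weak solution $p$ in the energy space solving \eqref{eq:variational form for jordan limfluct}. Concretely: show that the bilinear form $a(h,l):=-\crosqqmu{Lh}{l}$ extends to a bounded, coercive form on the Hilbert space obtained as the completion of $\mD\cap\mO$ under the energy norm (equivalent to $\NsQmu{\cdot}$ by coercivity plus an upper bound); then $l\mapsto -\crosqqmu{v}{l}$ is a bounded linear functional on that space since $v\in\bH^{-1}_{\mu,q}$, and Lax--Milgram yields a unique $p$ in the energy space with $a(p,l)=-\crosqqmu{v}{l}$ for all test $l$, i.e. \eqref{eq:variational form for jordan limfluct}. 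One must check that the energy space embeds into $\Ldmu$ (so $p\in\Ldmu\cap\mO$ as claimed) — this is a compactness/Poincaré-type argument on $\Sun$, using the bounds $0<c\leq q\leq C$ from Lemma~\ref{lem:estimqLq}.

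For the regularity statement in the case $v=\partial_\tta q$: once $p$ is a weak solution, one bootstraps. The equation $Lp=\partial_\tta q$ reads, after expanding \eqref{eq:L linear q intro limfluct}, as a second-order linear ODE in $\tta$ for each fixed $\om$ (the $\om$-coupling enters only through the scalar $\langle J\ast p\rangle_\mu$, which once $p\in\Ldmu$ is a fixed trigonometric polynomial of degree one in $\tta$), with smooth coefficients (because $q$ is smooth and $\langle J\ast q\rangle_\mu$ is smooth). Elliptic regularity on the circle — or simply integrating the first-order form of the equation twice, as one does to get \eqref{eq:integrated version of stationary equation limfluct} — upgrades $p(\cdot,\om)$ from $\bH^{-1}$ to $\mC^\infty(\Sun)$, and one then checks that the resulting classical function satisfies the pointwise identity \eqref{eq:jordanhq'} and lies in $\mD$ (the zero-average-in-$(\tta,\om)$ condition is automatic from oddness). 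Finally, \eqref{eq:qprime kernel L}, $L\partial_\tta q=0$, is the direct differentiation of the stationary equation \eqref{eq:fokker planck intro limfluct} already noted in Remark~\ref{rem:domain vs jordan limfluct}, so together with the existence of $p$ the characteristic space at $0$ has dimension at least $2$.

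The main obstacle I anticipate is establishing \emph{coercivity of the Dirichlet form on the odd subspace without an a~priori spectral gap}, i.e. ruling out that $0$ or a positive value lies in the spectrum of the symmetric part of $L$ restricted to $\mO$. In the non-disordered case this is Proposition~\ref{prop:BGP} (via reversibility), but with disorder $L$ is no longer self-adjoint; the fix is to treat the disordered $L$ as a perturbation of $L_{q_0}\oplus L_{q_0}$ acting on the two $\pm\om_0$ components — the coupling term $q(\tta,\om)\langle J\ast h\rangle_\mu(\tta)$ and the transport term $\om\partial_\tta h$ being controlled for $\om_0$ small — and to combine the gap of $L_{q_0}$ on $\mathcal{D}_0$ with the antisymmetry that removes the $\partial_\tta q$ direction. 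Getting the signs and the integration-by-parts boundary terms right in the weighted $\bH^{-1}_{\mu,q}$ inner product, and verifying the embedding of the energy space into $\Ldmu$, are the places where care is required; everything else is routine functional analysis.
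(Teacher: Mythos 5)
Your plan breaks at the very first step: you propose to prove coercivity of the full Dirichlet form $\mE_L(h,l)=\crosqqmu{Lh}{l}$ on the odd subspace, arguing that ``$\partial_\tta q$ (the kernel of $L$) is removed once we work modulo the symmetry.'' It is not removed: Lemma~\ref{lem:estimqLq} records that $\partial_\tta q\in\mO\cap\mN$, so $\partial_\tta q$ lies inside $\Ldmu\cap\mO$ and $L\partial_\tta q=0$. Hence $\mE_L$ has a nontrivial kernel direction on the odd subspace and cannot be coercive there, so a plain Lax--Milgram set-up on the full form cannot close. The paper avoids this by \emph{splitting} the form, $\mE_L(h,l)=\Gamma(h,l)+K\,\ell(h)\ell(l)$, and proving the inf-sup (``weak coercivity'') condition of Proposition~\ref{theo:lionslaxmilgram} only for the non-rank-one piece $\Gamma$, on the \emph{pair} of spaces $\mH=\Ldmu\cap\mO$ and $\mG=\bH^{-1}_{\mu,q}\cap\mO\cap\bL^\infty$; the rank-one correction $K\,\ell\otimes\ell$ is then handled by an explicit representation $\ell(\cdot)=\crosqqmu{Lp_2}{\cdot}$ (Lemma~\ref{lem:step three proof jord}), after which $p:=p_1-K\ell(p_1)p_2$ solves the problem. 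Restricting to $\mO$ is used to kill the \emph{cosine} half of the rank-one coupling term in $\langle J\ast\cdot\rangle_\mu$, not the kernel direction.

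The other difficulty you anticipate --- and the fix you offer --- also point in the wrong direction. You propose to establish coercivity perturbatively, treating $L$ as a small perturbation of $L_{q_0}\oplus L_{q_0}$. But the proposition is asserted \emph{for every $\om_0>0$}, not just small disorder, and the actual inf-sup bound on $\Gamma$ is constructive: given a test function $l$, one exhibits $h=g\mL$ with $g=q\,f$, where $f$ solves the first-order periodic ODE \eqref{eq:odejord}, and reads off $\Gamma(h,l)=\NsQmu{l}^2$. That ODE has a unique $2\pi$-periodic solution only because $\kappa(\om)\ne0$, i.e.\ \emph{only because there is disorder}; at $\om=0$ it has no periodic solution (as the paper remarks). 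So the mechanism behind the inf-sup bound \emph{disappears} in the non-disordered limit, and a perturbation argument from $\om_0=0$ is not just unnecessary, it is the wrong organizing principle for this step. Perturbation from the non-disordered self-adjoint operator is the tool used for Theorem~\ref{theo:spectreLq} (localization of the full spectrum, small $\om_0$), not for Proposition~\ref{prop:jordLq}. Your regularity bootstrap sketch at the end is, by contrast, essentially the paper's argument \eqref{eq:ptttahp}, and your reading of $L\partial_\tta q=0$ is correct; the gap is in how you propose to produce $p_1$.
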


\begin{rem}\rm
 The scope of Proposition~\ref{prop:jordLq} is more general than the restrictive case of a binary law $\mu=\frac12\left( \delta_{-\om_0}+
\delta_{\om_0} \right)$; the following proof works for more general distributions $\mu$, the only additional requirement
being integrability conditions\footnote{those conditions are obviously satisfied in the binary case \eqref{eq:assumption mu delta pm omega}.}
in $0$ and $+\infty$, see Remark~\ref{rem:integrability conditions X limfluct}.
\end{rem}
\noindent Proof of Proposition~\ref{prop:jordLq} relies on several lemmas:
\begin{lem}
\label{lem:step one proof jord}
 For $h\in \mO\cap\mD$, $l\in \mO\cap\mD$, let us introduce the Dirichlet form 
\begin{equation}
\label{eq:Bhl}
 \mE_L(h, l):= \crosqqmu{Lh}{l}.
\end{equation}
$\mE_L(\cdot, \cdot)$ is well defined on $\mD(\mE_L):= \left(\Ldmu\cap\mO\right)\times \left(\bH^{-1}_{\mu, q}\cap \mO\right)$ and one can
decompose $\mE_L(\cdot, \cdot)$ into:
\begin{equation}
 \label{eq:Arielle}
\forall (h, l)\in\mD(\mE_L),\ \mE_L(h,l)= \Gamma(h,l) + K \ell(h)\cdot \ell(l),
\end{equation}
where $\Gamma(\cdot, \cdot)$, bilinear form on $\mD(\mE_L)$ and $\ell(\cdot)$ linear form on $\bH^{-1}_{\mu, q}\cap \mO$, are defined as
follows:
\begin{align}
\forall (h, l)\in \mD(\mE_L),\ \Gamma(h,l)&:=-\frac{1}{2} \intSR\frac{hl}{q}\dd\lambda\dd\mu +
\intSR \kappa(\cdot)\frac{h\mL}{q^2}\dd\lambda\dd\mu,\label{eq:defGammajord}\\
 \forall l\in \bH^{-1}_{\mu, q}\cap \mO,\ \ell(l)&:=\intSR l \sin(\cdot)\dd\lambda\dd\mu,\label{eq:ell jord limfluct}
\end{align}
where $\kappa$ and $\mL$ in \eqref{eq:defGammajord} are respectively defined in \eqref{eq:integrated version of stationary equation limfluct}
and as the primitive of $l\in\bH^{-1}_{\mu, q}$ such that $\intS \frac{\mL(\cdot, \om)}{q(\cdot, \om)}=0$ for all $\om\in\Supp(\mu)$ (recall
\S~\ref{subsubsec:space H}).
\end{lem}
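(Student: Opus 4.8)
The plan is to prove Lemma~\ref{lem:step one proof jord} by a direct computation: plug the explicit expression \eqref{eq:L linear q intro limfluct} for $L$ into the pairing $\crosqqmu{Lh}{l}$ and reorganize the terms using the structure of the $\bH^{-1}_{\mu,q}$ inner product together with the stationary identity \eqref{eq:integrated version of stationary equation limfluct} from Lemma~\ref{lem:estimqLq}. The point is that the $\bH^{-1}_{\mu,q}$ pairing of two zero-mean (in $\tta$) functions is, up to the weight $1/q$, an $L^2$ pairing of their primitives; since $Lh$ always has zero $\tta$-mean (Lemma~\ref{lem:estimqLq}(3)) while $h$ and $l$ are odd and hence (after the decomposition \eqref{eq:decomposition h hzero k}) need their $\Span(q)$-components tracked carefully, the first task is to write $\crosqqmu{Lh}{l}$ as $\intSR \frac{\mathcal{L}_h \mathcal{L}}{q}\dd\lambda\dd\mu$ plus a boundary/mean-value term, where $\mathcal{L}_h$ is the appropriate primitive of $Lh$.

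First I would integrate the expression $Lh = \half\partial_\tta^2 h - \partial_\tta\big(h(\langle J\ast q\rangle_\mu + \om) + q\langle J\ast h\rangle_\mu\big)$ once in $\tta$; since $L$ maps into $\mathcal N$ (zero $\tta$-mean), the primitive is $2\pi$-periodic and equals, up to the normalizing constant fixed by $\intS \mathcal{L}_h/q=0$, the quantity $\half\partial_\tta h - h(\langle J\ast q\rangle_\mu+\om) - q\langle J\ast h\rangle_\mu + \text{const}(\om)$. Then the $\bH^{-1}_{\mu,q}$ pairing against $l$ becomes $\intSR \frac{1}{q}\big(\half\partial_\tta h - h(\langle J\ast q\rangle_\mu+\om) - q\langle J\ast h\rangle_\mu\big)\mathcal{L}\dd\lambda\dd\mu$. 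Now I would substitute the stationary relation \eqref{eq:integrated version of stationary equation limfluct}, i.e. $q(\langle J\ast q\rangle_\mu+\om) = \half\partial_\tta q - \kappa(\om)$, to rewrite $h(\langle J\ast q\rangle_\mu+\om) = \tfrac{h}{q}\big(\half\partial_\tta q - \kappa(\om)\big)$; combining this with the $\half\partial_\tta h$ term and integrating by parts in $\tta$ (using periodicity, no boundary terms), the terms $\frac{\partial_\tta h}{2q}\mathcal{L}$ and $-\frac{h\partial_\tta q}{2q^2}\mathcal{L}$ should combine into something proportional to $\intSR \partial_\tta(\text{something})\,\mathcal{L}/q$ which, after a further integration by parts moving the derivative onto $\mathcal{L}/q$ and recalling $\partial_\tta\mathcal{L}=l$, produces exactly the $-\half\intSR \frac{hl}{q}$ term of $\Gamma$; the leftover $\kappa(\om)$-term gives $+\intSR \kappa(\cdot)\frac{h\mathcal{L}}{q^2}$. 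Finally the term $-\intSR \frac{1}{q}\, q\langle J\ast h\rangle_\mu \,\mathcal{L} = -\intSR \langle J\ast h\rangle_\mu \mathcal{L}$: since $\langle J\ast h\rangle_\mu(\tta) = -K\int\int \sin(\vphi) h(\tta-\vphi,\om)\dd\vphi\mu(\dd\om)$ is, for the interaction $J=-K\sin$, a pure first Fourier mode, $\langle J\ast h\rangle_\mu(\tta) = K\big(a\cos\tta + b\sin\tta\big)$ with coefficients that are linear functionals of $h$; by the odd symmetry of $h$ (and of $q$) one of these coefficients vanishes and what survives is precisely $-K\,\ell(h)\cdot(\text{analogous functional of }l)$, i.e. the rank-one term $K\ell(h)\ell(l)$ with $\ell$ as in \eqref{eq:ell jord limfluct}. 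I would also need to check that the various constants-in-$\tta$ (the $\kappa(\om)$ in the primitive of $Lh$, and the normalizing constant in $\mathcal{L}$) either cancel or are absorbed thanks to the normalization $\intS \mathcal{L}/q=0$ and $\intS h = 0$ is \emph{not} assumed — this is where the $\Span(q)$-component of $h$ enters and must be handled; here the oddness of $h$ relative to $(\tta,\om)\mapsto(-\tta,-\om)$ is what makes the bookkeeping close up, since the mean value $\intS h(\cdot,\om)\dd\tta$ is an odd function of $\om$ and integrates suitably against the even weight structure.

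The well-definedness claim (that $\mathcal{E}_L$, $\Gamma$ and $\ell$ make sense on $\mathcal D(\mathcal{E}_L) = (\Ldmu\cap\mO)\times(\bH^{-1}_{\mu,q}\cap\mO)$) I would dispatch by the bounds of Lemma~\ref{lem:estimqLq}(4): since $0<c\leq q\leq C$, the weights $1/q$ and $\kappa/q^2$ are bounded, so $\Gamma(h,l)$ is controlled by $\Ndmu{h}$ times the $\bH^{-1}_{\mu,q}$-norm of $l$ (the primitive $\mathcal{L}$ of $l$ being exactly what the $\bH^{-1}$-norm measures), and $\ell(l) = \intSR l\sin(\cdot)$ is bounded since $\sin$ is smooth with bounded primitive. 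The main obstacle I anticipate is not any single estimate but the careful tracking of the non-zero $\tta$-means and the additive constants through the successive integrations by parts: one must be scrupulous about which primitive normalization is used for $Lh$ versus for $l$, and verify that the oddness hypothesis is exactly what guarantees the stray constant and the stray Fourier-cosine term drop out, leaving the clean decomposition \eqref{eq:Arielle}. A secondary subtlety is making sure the computation is valid for $h$ merely in $\Ldmu$ (not $\mathcal D$) by a density argument, since $\mathcal{E}_L(h,l)=\crosqqmu{Lh}{l}$ is initially only defined for $h\in\mathcal D$ but the formula \eqref{eq:Arielle} extends continuously — though strictly speaking the lemma as stated only asserts the decomposition on $\mathcal D$, so this last point can be deferred.
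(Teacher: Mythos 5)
Your proposal is correct and follows essentially the same computation as the paper: integrate $Lh$ once in $\tta$ to form its primitive, pair against $\mL$ using the $\bH^{-1}_{\mu,q}$ inner-product structure, substitute the stationary identity \eqref{eq:integrated version of stationary equation limfluct} to eliminate the drift term, integrate by parts to produce $-\frac12\intSR hl/q$ plus the $\kappa$-term, and finally expand $\langle J\ast h\rangle_\mu$ into its two first Fourier modes so that the oddness hypothesis kills the cosine cross-term and leaves $K\,\ell(h)\ell(l)$. The paper simply organizes the same algebra by first writing down the integration-by-parts identity for $\frac12\intS(\partial_\tta h)\mL/q$ and then recognizing the terms of $Lh$, rather than starting from the primitive of $Lh$ as you do; the constants in $\tta$ you worry about drop out exactly because of the normalization $\intS\mL/q=0$, as you anticipate.
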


\begin{lem}
 \label{lem:step two proof jord}
 For all continuous linear form $f$ on $\bH^{-1}_{\mu, q}\cap \mO$, there exists some $ p_1\in \Ldmu\cap\mO$ such that for $l\in
\bH^{-1}_{\mu, q}\cap \mO$
\begin{equation}
 \Gamma( p_1, l)= f(l). 
\end{equation}
\end{lem}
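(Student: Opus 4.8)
The statement is a solvability result for the bilinear form $\Gamma$, and the plan is to prove it by a variant of the Lax--Milgram theorem that accounts for the unknown $p_1$ being sought in $H_1:=\Ldmu\cap\mO$ while the test functions $l$ range over the \emph{strictly larger} space $H_2:=\bH^{-1}_{\mu,q}\cap\mO$ --- this asymmetry is the ``extension of Lax--Milgram'' announced in the introduction. Concretely, $\Gamma$ defines a bounded operator $\mathcal{A}\colon H_1\to H_2'$ by $\langle\mathcal{A}h,l\rangle:=\Gamma(h,l)$ (boundedness being read off the decomposition of Lemma~\ref{lem:step one proof jord}, together with the two--sided bound $0<c\leq q\leq C$, the boundedness of $\kappa$, and Poincar\'e estimates for the $\bH^{-1}_{\mu,q}$ norm), and the lemma asserts that $\mathcal{A}$ is onto. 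By the closed range theorem this amounts to the a priori estimate
\[
\exists\,\beta>0:\qquad \sup_{0\neq h\in H_1}\frac{\bigl|\Gamma(h,l)\bigr|}{\Ndmu{h}}\;\geq\;\beta\,\NsQmu{l}\qquad\text{for every }l\in H_2,
\]
and it is this transposed inf--sup inequality that I would actually establish.

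The leading term $\Gamma_0(h,l):=-\half\intSR\frac{hl}{q}\dd\tta\dd\mu$ is coercive ``up to the embedding'': since $0<c\leq q\leq C$ and the $\bL^2_{\mu}$ norm dominates the $\bH^{-1}_{\mu,q}$ norm, taking $h$ comparable to $l$ (informally $h=l$, made rigorous by density of $\mD$) yields $\bigl|\Gamma_0(h,l)\bigr|\gtrsim\Ndmu{h}^2\gtrsim\Ndmu{h}\,\NsQmu{l}$, so $\Gamma_0$ alone satisfies the estimate with $\beta$ depending only on $c,C$. The remaining term $\Gamma_1(h,l):=\intSR\kappa(\cdot)\frac{h\mathcal{L}}{q^2}\dd\tta\dd\mu$ involves the \emph{primitive} $\mathcal{L}$ of $l$ rather than $l$ itself, hence is of lower order; I would absorb it by a perturbation/Fredholm argument --- for small disorder $\om_0$ this is automatic since then $\kappa$ is small, and in general the finitely many possible obstructions are removed by the ODE analysis below.

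The ODE underlying the problem --- and the place where $\om_0>0$ enters --- is the following. Using $\langle J\ast h\rangle_\mu=K\cos(\cdot)\,\ell(h)$ (valid on $\mO$) and the stationary identity \eqref{eq:integrated version of stationary equation limfluct}, one checks that a solution $p_1$ of $\Gamma(p_1,\cdot)=f$ must satisfy, on each fibre $\om\in\{\pm\om_0\}$, a first--order linear ODE of the form $\half\,q(\cdot,\om)\,\partial_\tta\!\bigl(\tfrac{p_1}{q}\bigr)(\cdot,\om)+\kappa(\om)\,\tfrac{p_1}{q}(\cdot,\om)=(\text{explicit data involving }f\text{ and }\ell(p_1))$. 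Its homogeneous part has monodromy $\exp\!\bigl(-2\kappa(\om_0)\int_0^{2\pi}q(\tta,\om_0)^{-1}\dd\tta\bigr)$, which differs from $1$ because $\int_0^{2\pi}q(\tta,\om_0)^{-1}\dd\tta>0$ and $\kappa(\om_0)\neq0$ for $\om_0>0$ (indeed $\kappa$ is odd in $\om$ and vanishes only at $0$). Hence the homogeneous fibre ODE has no nontrivial $2\pi$--periodic solution, so each inhomogeneous fibre problem has a \emph{unique} periodic solution; feeding this back, the self--consistency relation for $\ell(p_1)$ reduces to a scalar linear equation that one solves, recovering $p_1\in\Ldmu\cap\mO$. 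This is simultaneously the mechanism behind the inf--sup estimate and a direct route to existence; note that $\mathcal{A}$ need not be injective (its kernel is at most one--dimensional, generated by the odd extension of the periodic solution of the above fibre ODE with constant right--hand side), which is consistent with the non--uniqueness of $p$ in Theorem~\ref{th:existence jrodan block intro limfluct}.

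The main obstacle is the a priori / inf--sup estimate in this genuinely two--space setting, and within it the control of the non--coercive correction $\Gamma_1$ \emph{without} assuming the disorder small; this is exactly why the elementary but crucial non--resonance fact $\kappa(\om_0)\neq0$ --- equivalently $\int_0^{2\pi}2\kappa(\om_0)\,q(\tta,\om_0)^{-1}\dd\tta\neq0$ --- is needed, since it is what makes the inhomogeneous first--order ODEs on the circle uniquely solvable and thereby closes the argument.
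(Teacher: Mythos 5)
Your high-level framework --- recasting Lemma~\ref{lem:step two proof jord} as surjectivity of $\mathcal A:H_1\to H_2'$, $\langle\mathcal A h,l\rangle=\Gamma(h,l)$, and reducing it to a transposed inf--sup inequality --- is exactly what the paper's Proposition~\ref{theo:lionslaxmilgram} (Showalter's two-space Lax--Milgram) encodes, so the strategy is the right one. The genuine gap is in how you propose to \emph{prove} the inf--sup.

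Your plan is to show that the leading term $\Gamma_0(h,l)=-\tfrac12\intSR\frac{hl}{q}$ already satisfies the estimate (by taking $h$ comparable to $l$) and that $\Gamma_1(h,l)=\intSR\kappa\,\tfrac{h\mL}{q^2}$ is a lower-order perturbation to be absorbed by a Fredholm argument. This does not work, even at the formal level. With $h=l/\Ndmu{l}$ you get $|\Gamma_0(h,l)|\approx\Ndmu{l}$, while $|\Gamma_1(h,l)|\leq C\,\Ndmu{h}\,\NsQmu{l}=C$; when $\Ndmu{l}$ is comparable to $\NsQmu{l}=1$ the two contributions are of the same size and may cancel, so there is no clean domination of $\Gamma_1$ by $\Gamma_0$. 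The ``small $\om_0$'' shortcut is worse than cosmetic: it points in the wrong direction, because the lemma is claimed for \emph{all} $\om_0>0$ and, crucially, the mechanism requires $\kappa(\om_0)\neq 0$; as $\kappa\to 0$ the problem becomes \emph{harder} (the limiting fibre ODE $\tfrac14\partial_\tta f=\tfrac1{q_0}$ has no periodic solution), not easier. Finally, the Fredholm step would still require showing that the cokernel vanishes, and you do not give an argument for that.

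What the paper actually does is bypass the perturbative decomposition entirely by a clever choice of test function. Given $l\in\bH^{-1}_{\mu,q}\cap\mO\cap\bL^\infty$ with $\NsQmu{l}=1$, one sets $h=g\,\mL$ with $g=q f$, where $f(\cdot,\om)$ is the unique $2\pi$-periodic solution of the first-order ODE
\begin{equation*}
\tfrac14\,\partial_\tta f(\cdot,\om)+\kappa(\om)\,\frac{f(\cdot,\om)}{q(\cdot,\om)}=\frac{1}{q(\cdot,\om)},\qquad f(0,\om)=f(2\pi,\om),
\end{equation*}
which exists precisely because $\kappa(\om)\neq 0$ for $\om\neq 0$. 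An integration by parts then collapses $\Gamma(h,l)$ into $\intSR\bigl\{\tfrac14\partial_\tta(g/q)+\kappa\,g/q^2\bigr\}\mL^2=\NsQmu{l}^2=1$, with $\Ndmu{h}$ uniformly bounded; this gives weak coercivity directly, with no residual term to absorb. You correctly spotted that a non-resonance condition of the form $\kappa(\om_0)\neq 0$ is the heart of the matter and that a first-order periodic ODE underlies it, but you placed that ODE in the wrong role (as a characterization of the unknown $p_1$, which in the paper only appears later, in the bootstrapping for regularity in Proposition~\ref{prop:jordLq}) rather than as the defining relation for the \emph{test function} $h$ in the coercivity argument. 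Re-centering the ODE on the construction of $h$, as above, is what closes the proof.
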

\begin{lem}
 \label{lem:step three proof jord}
The linear form $\ell(\cdot)$ defined in \eqref{eq:ell jord limfluct} can be expressed as a scalar product on
$\bH^{-1}_{\mu, q}\cap \mO$: there exists $p_2\in \mD\cap\mO$, for all $l\in \bH^{-1}_{\mu, q}\cap \mO$
\begin{equation}
\ell(l)=\crosqqmu{L p_2}{l}.
\end{equation}
\end{lem}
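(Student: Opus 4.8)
\emph{Strategy.} The proof rests on the decomposition $\mE_L=\Gamma+K\ell(\cdot)\ell(\cdot)$ of Lemma~\ref{lem:step one proof jord} together with the solvability statement of Lemma~\ref{lem:step two proof jord}, which inverts the bilinear form $\Gamma$. First I would check that $\ell$ is a \emph{bounded} linear form on $\bH^{-1}_{\mu,q}\cap\mO$: for each fixed $\om$ one integration by parts gives $\intS l(\cdot,\om)\sin\dd\tta=\bigl(\intS l(\cdot,\om)\dd\tta\bigr)\intS q(\cdot,\om)\sin\dd\tta-\intS\mL(\cdot,\om)\cos\dd\tta$, with $\mL$ the normalized primitive entering the norm $\NsQmu{\cdot}$; since $0<c\leq q\leq C$ (Lemma~\ref{lem:estimqLq}), two applications of Cauchy--Schwarz (first in $\tta$, then in $\om$) yield $\lvert\ell(l)\rvert\leq C'\NsQmu{l}$. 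Lemma~\ref{lem:step two proof jord} applied with $f=\ell$ then produces $p_1\in\Ldmu\cap\mO$ with $\Gamma(p_1,l)=\ell(l)$ for all $l\in\bH^{-1}_{\mu,q}\cap\mO$; testing with $l=p_1$ (legitimate since $\Ldmu\hookrightarrow\bH^{-1}_{\mu,q}$) gives in particular $\ell(p_1)=\Gamma(p_1,p_1)$.

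Next I would perform a rank-one correction. Set $\lambda:=1+K\ell(p_1)$ and, assuming for the moment $\lambda\neq0$, put $p_2:=\lambda^{-1}p_1\in\Ldmu\cap\mO$. Then for every $l$,
\begin{equation*}
\mE_L(p_2,l)=\Gamma(p_2,l)+K\ell(p_2)\ell(l)=\lambda^{-1}\bigl(1+K\ell(p_1)\bigr)\ell(l)=\ell(l).
\end{equation*}
It remains to gain regularity: the identity $\mE_L(p_2,\cdot)=\ell(\cdot)$ is the weak formulation of a second-order problem for $p_2$ with smooth datum $\sin$, and since $L$ is, for each $\om$, uniformly elliptic in $\tta$ with smooth coefficients, elliptic regularity on $\Sun$ yields $p_2(\cdot,\om)\in\mC^\infty(\Sun)$ for all $\om\in\Supp(\mu)$; hence $p_2\in\mD\cap\mO$ and, by the very definition \eqref{eq:Bhl}, $\mE_L(p_2,l)=\crosqqmu{Lp_2}{l}$. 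This is the claim.

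The step I expect to be the real obstacle is the non-degeneracy $\lambda=1+K\ell(p_1)\neq0$; it cannot be bypassed, because taking $\ell$ of the relation $h=(1-K\ell(h))p_1$ characterizing the solutions of $\mE_L(h,\cdot)=\ell(\cdot)$ shows that if $\lambda=0$ no representative can exist, so the lemma is in fact \emph{equivalent} to $\lambda\neq0$. I would argue by contradiction: if $\lambda=0$ then, applying the regularity step to $p_1$ itself, $\crosqqmu{Lp_1}{l}=\mE_L(p_1,l)=\lambda\ell(l)=0$ for all $l\in\bH^{-1}_{\mu,q}\cap\mO$; testing with $l=Lp_1\in\mN\cap\mO$ (Lemma~\ref{lem:estimqLq}) forces $Lp_1\equiv0$, so $p_1\in\Ker L\cap\mO$. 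Since $Lp_1=0$ integrates, via the stationary relation \eqref{eq:integrated version of stationary equation limfluct}, to an explicit first-order linear ODE for $p_1/q$ whose space of $2\pi$-periodic solutions is one-dimensional, one has $p_1\in\Span(\partial_\tta q)$; writing $p_1=c\,\partial_\tta q$, the identity $\Gamma(p_1,\cdot)=\ell(\cdot)$ becomes $c\,\Gamma(\partial_\tta q,\cdot)=\ell(\cdot)$, which is impossible because a direct computation (integrating by parts and using \eqref{eq:integrated version of stationary equation limfluct}) shows $\Gamma(\partial_\tta q,\cdot)$ and $\ell(\cdot)=\intSR(\cdot)\sin\dd\lambda\dd\mu$ are not proportional as linear forms on $\bH^{-1}_{\mu,q}\cap\mO$. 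The remaining elliptic-regularity point is routine, $L$ being elliptic in $\tta$ with smooth coefficients and smooth right-hand side.
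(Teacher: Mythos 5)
Your strategy (solve $\Gamma(p_1,\cdot)=\ell(\cdot)$ via Lemma~\ref{lem:step two proof jord}, then undo the rank-one piece $K\ell\ell$ by rescaling) is genuinely different from the paper's, which is a direct explicit construction: the paper identifies, via Riesz, $\ell(l)=\crosqqmu{e}{l}$ with $e=-\partial_\tta q\cos+q\sin$ (primitive $\mE=-q\cos$), and then writes down $p_2$ explicitly in \eqref{eq:defhh2} and checks $Lp_2\propto e$. Your rank-one correction would be an attractive shortcut, but its hinge — the non-degeneracy $\lambda=1+K\ell(p_1)\neq 0$ — is exactly where it breaks, and the contradiction argument you give for it is false.

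Concretely, your claim that ``$\Gamma(\partial_\tta q,\cdot)$ and $\ell(\cdot)$ are not proportional'' is wrong. Since $L\partial_\tta q=0$, the decomposition \eqref{eq:Arielle} gives
\begin{equation*}
\Gamma(\partial_\tta q, l)\;=\;\mE_L(\partial_\tta q,l)-K\ell(\partial_\tta q)\ell(l)\;=\;-K\ell(\partial_\tta q)\,\ell(l),
\end{equation*}
and integrating by parts, $\ell(\partial_\tta q)=\intSR \partial_\tta q\,\sin=-\intSR q\cos=-r$, which is nonzero at a synchronized solution. Hence $\Gamma(\partial_\tta q,\cdot)=Kr\,\ell(\cdot)$: the two forms \emph{are} proportional, with a nonzero factor. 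In particular $p_1=\tfrac{1}{Kr}\partial_\tta q\in\Ldmu\cap\mO$ is a perfectly admissible output of Lemma~\ref{lem:step two proof jord} (the extended Lax--Milgram result gives existence, not uniqueness), and for this choice $\lambda=1+K\ell(p_1)=1+K\cdot\tfrac{1}{Kr}\cdot(-r)=0$. Your rescaling $p_2=\lambda^{-1}p_1$ is then undefined, and indeed no scalar multiple of this $p_1$ can work, since $\mE_L(c\,\partial_\tta q,l)=c\,\crosqqmu{L\partial_\tta q}{l}=0$ for all $c$. So the argument-by-contradiction you proposed to establish $\lambda\neq 0$ actually runs the other way: the ``impossible'' proportionality is true, and at least one valid $p_1$ yields $\lambda=0$. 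This is not a repairable side remark but the core of the proof; without a way to pick a \emph{specific} $p_1$ with $1+K\ell(p_1)\neq 0$ (which requires exhibiting a second solution of $\Gamma(p_1,\cdot)=\ell(\cdot)$, i.e.\ essentially producing the $p_2$ one is after), the reduction does not close. The paper sidesteps the whole issue by never invoking Lemma~\ref{lem:step two proof jord} here: it simply exhibits $p_2$ in closed form \eqref{eq:defhh2} — notably with nontrivial mean value, consistent with Remark~\ref{rem:domain vs jordan limfluct} — and verifies $Lp_2\propto e$, where $e$ is the Riesz representative \eqref{eq:jordwoe} of $\ell$.
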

\noindent Let us admit for a moment Lemmas~\ref{lem:step one proof jord},~\ref{lem:step two proof jord} and~\ref{lem:step three proof jord} and
let us
prove Proposition~\ref{prop:jordLq}:

\begin{proof}[Proof of Proposition~\ref{prop:jordLq}]
Let $v$ be a fixed element of $\bH^{-1}_{\mu, q}\cap \mO$. Applying Lemma~\ref{lem:step two proof jord} to the continuous linear form
$f(l)=\crosqqmu{v}{l}$, there exists some $ p_1\in \Ldmu\cap\mO$ such that $\Gamma( p_1, l)= \crosqqmu{v}{l}$, which gives, using Lemma \ref{lem:step one proof jord} and Lemma~\ref{lem:step three proof jord}:
\begin{align*}
 \crosqqmu{v}{l}&= \Gamma(p_1, l),\\ &= \crosqqmu{L p_1}{l} - K \ell(p_1) \ell(l),\\&= \crosqqmu{L p_1}{l} - K \ell(p_1)\crosqqmu{L p_2}{l}.
\end{align*}
We can conclude that the variational formula \eqref{eq:variational form for jordan limfluct} is verified for the following choice of $p$:
\begin{equation}
\label{eq:defhh1h2}
 p:=  p_1 - K \ell(p_1)  p_2 \in \Ldmu\cap\mO.
\end{equation}
Let us prove now that such $p$ is in fact a regular function in $\tta$: since $p_2\in\mD$ is regular in $\tta$, it suffices to prove that
for all $\om\in\Supp(\mu)$, $\tta\mapsto p_1(\tta, \om)$ is $\mC^{2}$ (in fact $\mC^{\infty}$) in $\tta$. We start from the definition of
$p_1$:
\[\forall l\in \bH^{-1}_{\mu, q}\cap \mO,\quad \Gamma(p_1,l) = \crosqqmu{\partial_\tta q}{l}.\]
Since this true for all $l\in \bH^{-1}_{\mu, q}\cap \mO$, thanks to the expression of $\Gamma$ in \eqref{eq:defGammajord}, we obtain that for
any
fixed $\om\in\Supp(\mu)$, for Lebesgue-almost every $\tta\in\Sun$:
\begin{equation}
\label{eq:ptttahp}
\frac{1}{4} \frac{p_1(\tta, \om)}{q(\tta, \om)} = - \kappa(\om) \left(\int_{0}^{\tta}\frac{p_1(u, \om)}{q(u, \om)^{2}}\dd u \right) +
\int_{0}^{\tta} \frac{Q(u, \om)}{q(u, \om)}\dd u, 
\end{equation}
where $Q(\cdot, \om)$ is the primitive of $\partial_\tta q(\cdot, \om)$ such that $\intS \frac{Q(\cdot, \om)}{q(\cdot, \om)}=0$. Using that $q$ is bounded and $\mC^{\infty}$ in $\tta$ and that $p_1(\cdot, \om)\in \Ld$, we see that the primitive $\int_{0}^{\tta}\frac{p_1(u, \om)}{q(u, \om)^{2}}\dd u$ has a $\mC^{1}$ version. Thanks to \eqref{eq:ptttahp}, $p_1$ has a $\mC^{1}$ version. So, the right-hand side of \eqref{eq:ptttahp} is a least $\mC^2$, and so does $p_1$. The same repeated argument shows that $p_1$ is $\mC^{\infty}$ in $\tta$. That concludes the proof of Proposition~\ref{prop:jordLq}.
\end{proof}
\noindent It now remains to prove the three lemmas:
\begin{proof}[Proof of Lemma~\ref{lem:step one proof jord}]
Let us prove equality \eqref{eq:Arielle}: since $\mL$ is a primitive of $l$, one has\begin{align*}
\frac{1}{2} \int_{\Sun} \frac{(\partial_\tta h)\mL}{q} &= -\frac{1}{2} \int_{\Sun} \frac{h l}{q} + \frac{1}{2} \int_{\Sun}
\frac{h\mL}{q^2} \partial_\tta q\, .
        \end{align*}
Using \eqref{eq:integrated version of stationary equation limfluct}, for $\om\in\Supp(\mu)$\begin{align*}
    \frac{1}{2} \int_{\Sun}\frac{(\partial_\tta h)\mL}{q} &= -\frac{1}{2} \int_{\Sun} \frac{h l}{q} +
\int_{\Sun} \frac{h\mL}{q}\left( \langle J\ast  q\rangle_\mu (\cdot) +\om\right)+\kappa(\om) \int_{\Sun} \frac{h\mL}{q^2}.
   \end{align*}
Thanks to the expression of $Lh$ in \eqref{eq:L linear q intro limfluct}, we obtain
\begin{align}
 \mE_L(h, l) &= -\frac12 \intSR\frac{h l}{q} + \intSR \kappa(\cdot)\frac{h}{q^2}\mL - \intSR
\langle J\ast h\rangle_\mu\mL,\label{eq:mckexplint}\\&=\Gamma(h,l) - \intSR \langle J\ast h\rangle_\mu\mL.\nonumber
\end{align}
Lastly, integrating by parts the last term in \eqref{eq:mckexplint} and expanding the cosine function (recall $J(\cdot)=-K\sin(\cdot)$), we
obtain:
\begin{align*}
- \intSR \langle J\ast  h\rangle_\mu\mL &= K\left(\intSR \cos(\cdot) l \dd\lambda\dd\mu\right)\left(\intSR \cos(\cdot)
h\dd\lambda\dd\mu \right)\\
&+K \left(\intSR \sin(\cdot) l \dd\lambda\dd\mu \right)\left(\intSR \sin(\cdot) h \dd\lambda\dd\mu \right).
\end{align*}
But, since $l\in\mO$, the first term in the latter expression is zero. The result \eqref{eq:Arielle} follows.
\end{proof}

\begin{proof}[Proof of Lemma~\ref{lem:step two proof jord}]
In this proof, we use the following extension to Lax-Milgram Theorem:

\begin{prop}[{\cite[chap. III]{Showalter1997}}]
\label{theo:lionslaxmilgram}
 Let $\{\mH, |.|\}$ be a Hilbert space and $\{\mG, \N{.}\}$ a normed linear space. Suppose $\Gamma:\mH\times\mG\to \bR$ is bilinear and that
$\Gamma(\cdot, \vphi)$ is continuous for each
$\vphi\in\mG$. If there exists some constant $C>0$ such that
\begin{equation}
 \inf_{\N{\vphi}=1} \sup_{|h|\leq 1} |\Gamma(h, \vphi)|\geq C, \quad \text{(weak coercivity)},
\end{equation}
Then for each $f\in\mG'$ there exists some $ p\in\mH$ such that $\Gamma( p, \vphi)=f(\vphi)$ for all
$\vphi\in\mG$.
\end{prop}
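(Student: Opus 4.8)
The plan is to reconstruct the classical Riesz-representation proof of the Lions generalisation of the Lax--Milgram theorem. First, for each fixed $\vphi\in\mG$ the map $h\mapsto\Gamma(h,\vphi)$ is, by hypothesis, a continuous linear functional on the Hilbert space $\mH$, so the Riesz representation theorem provides a unique element $A\vphi\in\mH$ with $\Gamma(h,\vphi)=\cro{h}{A\vphi}$ for every $h\in\mH$. Bilinearity of $\Gamma$ makes $A\colon\mG\to\mH$ linear, and, since the operator norm of a functional on a Hilbert space equals the norm of its Riesz representative, the weak coercivity hypothesis (together with homogeneity of $A$) rewrites as
\begin{equation*}
\forall\vphi\in\mG,\qquad |A\vphi|\;=\;\sup_{|h|\leq 1}\bigl|\Gamma(h,\vphi)\bigr|\;\geq\; C\,\N{\vphi}.
\end{equation*}
In particular $A$ is injective and bounded below by $C$.

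Next I would push the datum $f$ onto the range $R(A):=A(\mG)\subseteq\mH$ by setting $\Lambda(A\vphi):=f(\vphi)$. This is unambiguous because $A$ is injective, it is clearly linear, and it is bounded on the subspace $R(A)$: writing $y=A\vphi$,
\begin{equation*}
|\Lambda(y)|\;=\;|f(\vphi)|\;\leq\;\N{f}_{\mG'}\,\N{\vphi}\;\leq\;\frac{\N{f}_{\mG'}}{C}\,|A\vphi|\;=\;\frac{\N{f}_{\mG'}}{C}\,|y|.
\end{equation*}
Hence $\Lambda$ is a bounded linear functional on the linear subspace $R(A)$; it extends uniquely, by uniform continuity, to a bounded linear functional $\bar\Lambda$ on the closed subspace $\overline{R(A)}$ of $\mH$.

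Finally, $\overline{R(A)}$ is itself a Hilbert space, so a second application of the Riesz representation theorem yields $p\in\overline{R(A)}\subseteq\mH$ such that $\bar\Lambda(y)=\cro{p}{y}$ for all $y\in\overline{R(A)}$. Specialising to $y=A\vphi$ gives $\cro{p}{A\vphi}=\Lambda(A\vphi)=f(\vphi)$, i.e.\ $\Gamma(p,\vphi)=f(\vphi)$ for every $\vphi\in\mG$, which is the asserted conclusion. (Equivalently, one may skip the closure step and extend $\Lambda$ from $R(A)$ to all of $\mH$ via Hahn--Banach, then represent it by Riesz on $\mH$; the outcome is the same.)

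The only genuine subtlety — and the reason the argument is slightly more than a verbatim Lax--Milgram — is that $\mG$ is merely a normed space, not assumed complete, so $R(A)$ need not be closed in $\mH$; this is precisely why one must pass to $\overline{R(A)}$ (or invoke Hahn--Banach) before applying Riesz representation, rather than use it directly on $R(A)$. I would also note that, unlike the symmetric Lax--Milgram setting, no coercivity estimate is imposed ``in the other variable'', so $A$ need not be surjective and $p$ need not be unique; but uniqueness is not part of the statement, only existence, which the above construction delivers.
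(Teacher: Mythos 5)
Your proof is correct. The paper does not actually prove this proposition—it imports it verbatim from Showalter (chap.~III) and uses it as a black box—so there is no in-paper argument to compare against; your reconstruction is the standard proof of the Lions generalisation of Lax--Milgram (Riesz representation to build the bounded-below injective map $A\colon\mG\to\mH$, transport of $f$ to a bounded functional on $R(A)$, extension to $\overline{R(A)}$, and a second application of Riesz), and you correctly identify the one genuine subtlety, namely that $R(A)$ need not be closed since $\mG$ is only a normed space.
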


The principle of the proof of Lemma~\ref{lem:step two proof jord} is to show that the bilinear function $\Gamma$ defined in
\eqref{eq:defGammajord} satisfies Proposition~\ref{theo:lionslaxmilgram} for 
\begin{align}
\mH:=\Ldmu\cap\mO,\quad \text{endowed with}\quad \N{\cdot}_{\mu, 2},\\ 
\mG:=\bH^{-1}_{\mu, q}\cap\mO\cap \bL^\infty(\Sun\times\bR),\quad \text{endowed with}\quad \N{\cdot}_{\mu, -1, q}.
\end{align}
Namely, we have the following:
\begin{enumerate}
 \item For each $l\in\mG$, $\Gamma( \cdot, l)$is continuous on $\Ldmu\cap\mO$: indeed, for the first term of $\Gamma(h, l)$, we have
\begin{align*}
\left|\intSR \frac{h l}{q} \dd\lambda\dd\mu\right|&\leq C\Ninf{l}\intSR |h|\dd\lambda\dd\mu\leq C \N{h}_{\mu, 2}.
\end{align*}
And for the second term, using the boundedness of $q$:
\begin{align*}
 \intSR \left|\kappa(\cdot)\frac{h}{q^2}\mL\right|\dd\lambda\dd\mu&\leq C\intSR \left|h\mL\right|\dd\lambda\dd\mu\leq C \N{h}_{\mu, 2}
\NsQmu{l}.
\end{align*}
\item $\Gamma$ is weakly coercive: let us fix $l\in \mG$ such that $\NsQmu{l}=1$.

Let us choose $h=g\mL\in\Ldmu\cap\mO$, where for all $ \om\in\Supp(\mu)$, $g(\cdot, \om)$ is
a $2\pi$-periodic function to be defined later. Then, by integration by parts in the equality \eqref{eq:defGammajord}
\begin{align}
 \Gamma(h, l) &= -\frac{1}{2}\intSR \frac{g}{q}l\mL + \intSR \kappa(\cdot)\frac{g}{q^{2}} \mL^2 = \intSR \left\{ \frac14 \partial_\tta
\left(\frac{g}{q}\right) + \kappa(\cdot)\frac{g}{q^2}\right\}\mL^2 \, .\label{eq:Gamma jordan final
form limfluct}
\end{align}
Consider now for fixed $\om\in\Supp(\mu)$ the following first order ODE, with periodic boundary condition:
\begin{equation}
 \label{eq:odejord}
\frac14\partial_{\tta} f(\cdot, \om) +\kappa(\om)\frac{f(\cdot, \om)}{q(\cdot, \om)} = \frac{1}{q(\cdot, \om)},\quad \text{with $f(0,
\om)=f(2\pi, \om)$} .
\end{equation}
Then for any $\om\in\Supp(\mu)\smallsetminus\{0\}$, an explicit calculation (left to the reader) shows that there exists a unique solution to
\eqref{eq:odejord}, $\tta\mapsto f(\tta, \om)$.
\begin{rem}\rm
In the case $\om=0$, \eqref{eq:odejord} reduces to $\frac14\partial_{\tta} f= \frac{1}{q_0}$ which is incompatible with
the condition $f(0)=f(2\pi)$, since $\intS \frac{1}{q_0}\dd\tta>0$: there is no such $2\pi$-periodic solution in the case $\om=0$.
\end{rem}
 Moreover, it is straightforward to see that $\N{\intR|f(\cdot, \om)|\dd\mu}_{\infty,\Sun}\leq C$, for some constant $C>0$.
\begin{rem}
 \label{rem:integrability conditions X limfluct}\rm
It is easy to see that $f(\cdot, \om)$ is not bounded as $\om\to0$ and $\om\to+\infty$; thus, for general distributions $\mu$, the same
control on $f$ requires additional integrability assumptions in $0$ and $+\infty$ (namely $\intR \max\left(\frac{1}{|\om|}, e^{c\om}\right)
\mu(\dd\om)<\infty$ for some constant $c>0$).
\end{rem}

If we choose $h$ such that $h = g\cdot\mL$ with $g(\cdot, \om) = q(\cdot, \om) f(\cdot, \om)$, we have
the following:
\begin{itemize}
 \item By construction of $f$, using \eqref{eq:odejord} in \eqref{eq:Gamma jordan final form limfluct}, $\Gamma(h, l)=\NsQmu{l}^2=1$,
\item $\N{h}^2_{\mu, 2} \leq C
\int_{\Sun} f^2 \frac{\mL^2}{q}\dd\lambda \leq C$. So,
$\sup_{\N{h}_{\mu, 2, 1}\leq 1} |\Gamma(h, l)|\geq \frac{1}{C}$, where $C$ is independent of $l\in \mG$ such that $\NsQmu{l}=1$.
\end{itemize}
Applying Proposition~\ref{theo:lionslaxmilgram}, we obtain the existence of some $p_1\in \Ldmu\cap\mO$
such that $\Gamma(p_1, l) = f(l)$, for all $l\in\mG$. But by density, this is also true for $l\in \bH^{-1}_{\mu, q}\cap\mO$.\qedhere
\end{enumerate}
\end{proof}

\begin{proof}[Proof of Lemma~\ref{lem:step three proof jord}]
Since there exists some constants $C, c>0$ such that for all $ \om\in\Supp(\mu)$, $\tta\in \Sun$, $0< c\leq q(\tta, \om)\leq C$, $\ell$ is
continuous on $\bH^{-1}_{\mu, q}\cap\mO$ (as well as on $\Ldmu\cap\mO$). More precisely, by Riesz theorem, there exists a unique $e\in
\bH^{-1}_{\mu, q}\cap\mO$ such that for all $l\in \bH^{-1}_{\mu, q}\cap\mO$,
$\ell(l)=\crosqqmu{e}{l}$. One can be more explicit: a simple calculation shows that this $(\tta, \om)\mapsto
e(\tta, \om)$ corresponds to the primitive $\mE(\tta, \om)= - q(\tta, \om) \cos(\tta)$, that is:
\begin{equation}
\label{eq:jordwoe}
\forall \tta\in\Sun,\  \om\in\Supp(\mu), \quad e(\tta, \om) = - \partial_\tta q(\tta, \om) \cos(\tta) + q(\tta, \om)\sin(\tta). 
\end{equation}
Let us introduce the following function $p_2\in \Ldmu\cap\mO$:
\begin{equation}
\label{eq:defhh2}
p_2(\tta, \om) = \frac{e^{-B(\tta, \om)}}{1-e^{4\pi\om}}  \int_{\Sun} e^{B(u, \om) +4\pi\om} \dd u + \int_{0}^{\tta} e^{B(u, \om) -
B(\tta, \om)}\dd u,
\end{equation}
for\[B(\tta, \om) = -2 \left( Kr \left(\cos(\tta)-1\right)+\om\tta\right).\] Then one readily verifies that $Lp_2$
is proportional to $e$.
\end{proof}

\section{Global spectral properties of operator $L$ (Proof of Theorem~\ref{theo:spectreLq})}
\label{sec:global localization spectrum L limfluct}
The purpose of this section is to prove Theorem~\ref{theo:spectreLq}. The main idea of the proof is to decompose the operator $L$ defined by
\eqref{eq:L linear q intro limfluct} on the domain $\mD$ given by
\eqref{eq:defdomainD} into the sum of a self-adjoint operator $A$ (in a weighted Sobolev space for appropriate weights) and a
perturbation $B$ which will be considered to be small \wrt $A$. Namely, one can decompose $L$ \eqref{eq:L linear q intro limfluct} into $L=A+B$ where, for all $h\in\mD$, for all $ \om\in\Supp(\mu)$,
\begin{equation}
 \label{eq:operator A L limfluct}
Ah(\tta, \om):= \frac12 \partial_{\tta}^2h(\tta, \om) -\partial_\tta\Big(h(\tta, \om) (J\ast q_0)(\tta) + q_0(\tta) \langle J\ast
h\rangle_\mu \Big),\end{equation}
and,
\begin{equation}
 \label{eq:operator B L limfluct}
Bh(\tta, \om):= -\partial_\tta\Big(h(\tta, \om) \{\langle J\ast (q -q_0)\rangle_\mu(\tta) + \om\} + (q(\tta, \om)-q_0(\tta)) \langle J\ast
h\rangle_\mu \Big).
\end{equation}
We divide the proof of Theorem~\ref{theo:spectreLq} into three parts: in \S~\ref{subsec:spectral properties A limfluct}, we prove that $A$ is
essentially self-adjoint (and thus generates an analytic semigroup) in some weighted Sobolev space (recall \S~\ref{subsec:weighted sobo
limfluct}) for an appropriate choice of weights. Note that this section strongly relies on the fact that $\mu$ is a binary distribution.

The purpose of \S~\ref{subsec:control perturb B} is to establish precise control of the size of the perturbation $B$ \wrt $A$. The last step of the proof (\S~\ref{subsec:spectral properties of operator L limfluct d}) consists in deriving similar spectral properties for $L=A+B$, especially the fact that the spectrum of $L$ lies in the complex half-plane with negative real part.

\subsection{Spectral properties of the operator $A$}
\label{subsec:spectral properties A limfluct}
In this paragraph, we prove mainly that $A$ defined in \eqref{eq:operator A L limfluct} is essentially self-adjoint for a Sobolev norm that
is equivalent to the norm $\NH{\cdot}$ defined in \S~\ref{subsubsec:space H}.

Since we are working in the domain $\mD$ (recall \eqref{eq:defdomainD}), the test functions $h$ are such that $\intR h(\cdot, \om)\dd\mu
=\frac12\left(h(\cdot, +\om_0) +h(\cdot, -\om_0)\right)$ has zero mean value on $\Sun$. The idea of this paragraph is to reformulate the
operator
$A$ in terms of the sum $\frac12 (h(\cdot, +\om_0)+h(\cdot, -\om_0))$ and the difference $\frac12(h(\cdot, +\om_0)-h(\cdot, -\om_0))$; namely, we define the following $2\times2$ invertible matrix:
\[M:=\half \begin{pmatrix} 1 & 1\\ 1&-1\end{pmatrix},\]
and for $h\in\mD$, let $\svect{u}{v}:= M\cdot h$, namely
\begin{equation}
\label{eq:defuv}
\left\lbrace \begin{array}{ccl}
u(\cdot)&:=& \half(h(\cdot, +\om_0)+ h(\cdot, -\om_0)),\\
v(\cdot)&:=& \half(h(\cdot, +\om_0)- h(\cdot, -\om_0)).
\end{array}\right. 
\end{equation}
We are now able to define the following operator: $\tA:= M\circ A \circ M^{-1}$, defined on the domain $\tmD$
\begin{equation}
 \label{eq:defdomainDtilde}
\tmD:=\ens{(u, v)\in\mC^{2}(\Sun)\times\mC^{2}(\Sun)}{\int_{\Sun} u(\tta)\dd\tta=0},
\end{equation}
given by
\begin{align}
\forall (u, v)\in \tmD,\quad \tA\vect{u}{v}&:=\vect{\tA_1u}{\tA_2v}:=\left(\begin{aligned}
\half \partial_{\tta^2}u &- \partial_\tta  \left[ u (J\ast q_0) + q_0 (J\ast u)\right]\\
\half \partial_{\tta^2}v &- \partial_\tta\left[ v (J\ast q_0) \right]
\end{aligned}\right).\label{eq:defAselfadj}
\end{align}
The remarkable observation is that operator $\tA$ is now \emph{uncoupled} w.r.t. variables $u$ and $v$; consequently, in order to diagonalize $\tA$, it suffices to diagonalize both components of $\tA$, namely $\tA_1$ and $\tA_2$. This is the purpose of Propositions~\ref{prop:A1 semigroup} and \ref{prop:A2selfadj} below.

\medskip

We use here the weighted Sobolev norms $\Vert\cdot\Vert_{-1, k}$ defined in \eqref{eq:norm Hmoins 1 weight k limfluct} for different choices of $k(\cdot)$. Concerning the first component, $\tA_1=L_{q_{0}}$ (with domain $\{u\in\mC^2(\Sun),\ \intS u=0\}$) is equal to the McKean-Vlasov operator with no disorder defined in \eqref{eq:defL0}. Following \S~\ref{subsec:non disordered case limfluct}, the natural space for the study of $\tA_1$ is $\Hsq$ defined in \eqref{eq:norm Hmoins 1 weight k limfluct}, for the weight $k(\cdot)=q_0(\cdot)$ (recall \eqref{eq:defq_0}). In this space, we have
\begin{prop}
\label{prop:A1 semigroup}
In $\Hsq$, $\tA_1$ is essentially self-adjoint with compact resolvent and spectrum in the negative part of the real axis. $0$ is a
one-dimensional eigenvalue, spanned by $\partial_\tta q_0$. The spectral gap $\gapAf=\gapz$ between $0$ and the rest of the spectrum is
strictly
positive. 

Moreover, the self-adjoint extension of $\tA_1$ is the infinitesimal generator of a strongly continuous semi-group of contractions $\tT_1(t)$
on
$\Hsq$. For every $0<\alpha<\frac{\pi}{2}$, this semigroup can be extended to an analytic semigroup $\tT_1(z)$ defined
on $\Delta_\alpha=\ens{\lambda}{|\arg(\lambda)|<\alpha}$ and one has the following estimate on its resolvent (where $\Sigma_\alpha\, =\,
\ens{\lambda \in\bC}{|\arg(\lambda)|<\frac{\pi}{2}+\alpha} \cup \{0\}$):
\begin{equation}
\label{eq:estim resolvent A1 limfluct}
\forall \alpha\in\Big(0, \frac\pi2\Big),\ \forall \lambda\in\Sigma_\alpha,\ \Nsq{R(\lambda, \tA_{1})}\, \leq\, \frac{1}{1-\sin(\alpha)}\cdot
\frac{1}{|\lambda|}\, .
\end{equation}
\end{prop}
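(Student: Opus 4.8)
The plan is to extract the static spectral statements directly from the known result of~\cite{BGP} recalled in Proposition~\ref{prop:BGP}, and to derive the generation, contraction and analyticity statements from self-adjointness by means of the spectral theorem.

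First I would note that $\tA_1$ in~\eqref{eq:defAselfadj}, carried on the domain $\{u\in\mC^2(\Sun):\intS u=0\}=\mD_0$ of~\eqref{eq:domain Dzero}, is exactly the non-disordered Kuramoto operator $L_{q_0}$ of~\eqref{eq:defL0}. Consequently Proposition~\ref{prop:BGP} applies verbatim in $\Hsq=\bH^{-1}_{q_0}$: $\tA_1$ is essentially self-adjoint, its self-adjoint extension has compact resolvent (hence pure point spectrum with finite multiplicities) contained in $(-\infty,0]$, $0$ is a simple eigenvalue spanned by $\partial_\tta q_0$, and the distance $\gapAf=\gapz$ between $0$ and $\sigma(\tA_1)\smallsetminus\{0\}$ is strictly positive. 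This covers everything but the last part of the statement.

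For the semigroup part, I would use that the self-adjoint extension of $\tA_1$ is a nonpositive self-adjoint operator, since its spectrum lies in $(-\infty,0]$. By the spectral theorem --- equivalently by Hille--Yosida applied to a nonpositive self-adjoint operator --- it generates a strongly continuous semigroup $\tT_1(t)=e^{t\tA_1}$, $t\geq0$, with $\Nsq{\tT_1(t)}=\sup_{s\in\sigma(\tA_1)}e^{ts}\leq1$, i.e.\ a contraction semigroup. For the resolvent estimate, fix $\alpha\in(0,\tfrac\pi2)$ and $\lambda\in\Sigma_\alpha\smallsetminus\{0\}$: then $\lambda\notin(-\infty,0]\supseteq\sigma(\tA_1)$, so $\lambda\in\rho(\tA_1)$ and the spectral theorem gives $\Nsq{R(\lambda,\tA_1)}=\operatorname{dist}(\lambda,\sigma(\tA_1))^{-1}\leq\operatorname{dist}(\lambda,(-\infty,0])^{-1}$. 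An elementary planar computation yields $\operatorname{dist}(\lambda,(-\infty,0])\geq(1-\sin\alpha)|\lambda|$ --- the distance equals $|\lambda|$ when $\Re\lambda\geq0$, and equals $|\Im\lambda|\geq|\lambda|\cos\alpha\geq(1-\sin\alpha)|\lambda|$ when $\Re\lambda<0$ with $|\arg\lambda|\leq\tfrac\pi2+\alpha$ --- which is precisely~\eqref{eq:estim resolvent A1 limfluct}.

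Finally, the bound $\Nsq{R(\lambda,\tA_1)}\leq M_\alpha|\lambda|^{-1}$ on $\Sigma_\alpha$, holding for every $\alpha\in(0,\tfrac\pi2)$, is exactly the hypothesis of the classical characterization of generators of bounded analytic semigroups (see, e.g.,~\cite{Pazy1983,Lunardi1995}); it gives that $\tA_1$ generates a bounded analytic semigroup $\tT_1(z)$ on $\Delta_\alpha$, for each $\alpha\in(0,\tfrac\pi2)$, consistent with $\tT_1(t)$ for real $t>0$ by uniqueness. I do not anticipate a real obstacle: the proposition is the concatenation of Proposition~\ref{prop:BGP} with standard facts about self-adjoint generators, and the only points that need (minor) care are checking that $\tA_1$ genuinely coincides with $L_{q_0}$ so that~\cite{BGP} is applicable on the nose, and the elementary sector geometry underpinning the resolvent bound.
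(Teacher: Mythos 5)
Your proposal is correct and reaches the same conclusion, but the route to the resolvent estimate and the analyticity is genuinely different and cleaner than the paper's. The paper uses Lumer--Phillips to get the contraction semigroup, then proves~\eqref{eq:estim resolvent A1 limfluct} by shifting to $\tA_{1,\eps}:=\tA_1-\eps$ (so that $0\in\rho(\tA_{1,\eps})$), using only the self-adjoint bound $\Nsq{R(\lambda,\tA_{1,\eps})}\leq 1/|\Im\lambda|$, and then expanding $R(\lambda,\tA_{1,\eps})$ in a Taylor series around a point $\sigma+i\tau$ chosen geometrically to reach $\lambda$ with $\Re\lambda<0$; the constant $1/(1-\sin\alpha)$ drops out of the radius of convergence of that series. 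You instead invoke the full strength of the spectral theorem for a nonpositive self-adjoint operator: the exact identity $\Nsq{R(\lambda,\tA_1)}=\operatorname{dist}(\lambda,\sigma(\tA_1))^{-1}$ together with the elementary estimate $\operatorname{dist}(\lambda,(-\infty,0])\geq(1-\sin\alpha)|\lambda|$ (your chain actually gives the slightly sharper $\cos\alpha$ in the worst case, and $\cos\alpha\geq 1-\sin\alpha$ on $(0,\pi/2)$). This is shorter and avoids the $\eps$-perturbation entirely for the resolvent bound. What the paper's longer method buys is reusability: the Taylor-series-around-a-point-in-the-right-half-plane argument does not use self-adjointness beyond the bound on the imaginary axis, and it is exactly the argument that gets recycled verbatim for the non-self-adjoint operator $L=A+B$ in the proof of Proposition~\ref{prop:semigroupAB}, where your spectral-theorem shortcut is unavailable. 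One small point to tighten in your write-up: the generation criterion as stated in Proposition~\ref{prop:pazysemgps} (Pazy's Th.~5.2) requires $0\in\rho(F)$, which fails for $\tA_1$ since $0$ is an eigenvalue; you should either apply the criterion to $\tA_1-\eps$ and multiply by $e^{\eps t}$ as the paper does, or explicitly invoke Lunardi's notion of sectorial operator, which does not require invertibility and is satisfied by $\tA_1$ with vertex $\omega=0$.
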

The second component $\tA_2$ is a second order ordinary differential operator, with domain $\mC^2(\Sun)$. The natural space in which to study $\tA_2$ (see \S~\ref{subsubsec:diagA2}) is $\HsPhi$, for the choice of the weight function
$\tta\mapsto w(\tta)=\frac{e^{-\Phi(\tta)}}{\intS e^{-\Phi}}$, with 
\begin{equation}
\label{eq:weight Phi limfluct}
\Phi(\tta):= -2Kr_0\cos(\tta),
\end{equation}
where $r_0$ is given by \eqref{eq:Psizero limfluct}. Namely, we have
\begin{prop}
\label{prop:A2selfadj}
 The operator $(\tA_2, \mC^2(\Sun))$ is essentially self-adjoint in $\HsPhi$ and has compact resolvent. Hence, its spectrum
consists of isolated eigenvalues with finite multiplicities. The kernel of $\tA_2$ is of dimension $1$, spanned by
$w(\tta)=\frac{e^{-\Phi(\tta)}}{\intS e^{-\Phi}}$. Moreover, we have the following spectral
gap estimation:
\begin{equation}
 \label{eq:estimgapA2}
\forall v\in \mC^2(\Sun), \quad -\crosPhi{\tA_2v}{v} \geq \frac{e^{-4Kr_0}}{2} \NsPhi{v - \left( \intS v \right) w},
\end{equation}
so that the spectrum of $\tA_2$ lies in the negative part of the real axis and the distance between $0$ and the rest of the spectrum
$\gapAs$ is at least equal to $\frac{e^{-4Kr_0}}{2}$. One also has explicit estimate on the resolvent of $\tA_2$:
\begin{equation}
\label{eq:estim resolvent A2 limfluct}
\forall \alpha\in(0, \frac\pi2),\ \forall \lambda\in\Sigma_\alpha,\ \NsPhi{R(\lambda, \tA_{2})}\, \leq\, \frac{1}{1-\sin(\alpha)}\cdot
\frac{1}{|\lambda|}\, .
\end{equation}
\end{prop}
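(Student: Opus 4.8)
Here is the route I would take. The point is that, after a change of variables, $\tA_2$ is a Fokker--Planck operator in divergence form with invariant density $w$, and the analysis becomes a simplified version of the essentially-self-adjoint analysis of $L_{q_0}$ in \cite{BGP}. First I would record that $\langle J\ast q_0\rangle_\mu=J\ast q_0=-Kr_0\sin(\cdot)$ — this uses that $q_0$ is even and that $r_0$ solves \eqref{eq:Psizero limfluct}, so that $\intS\cos(\tta)q_0(\tta)\dd\tta=r_0$ — and since $\Phi'(\tta)=2Kr_0\sin(\tta)$ this is exactly $-\half\Phi'$. Together with $w'=-\Phi'w$, the operator \eqref{eq:defAselfadj} becomes
\[
\tA_2 v \;=\; \half\,\partial_\tta\big(w\,\partial_\tta(v/w)\big),\qquad v\in\mC^2(\Sun),
\]
from which $\tA_2 w=0$ (so $w\in\Ker\tA_2$) and the fact that $\tA_2 v$ has zero mean on $\Sun$ are immediate. (Incidentally $w=q_0$, so $\HsPhi$ is nothing but $\Hsq$; this makes the parallel with Proposition~\ref{prop:A1 semigroup} even closer, the difference being only the missing self-consistent coupling term.)

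Next I would compute the Dirichlet form. Writing $v=\big(\intS v\big)w+v_0$ with $\intS v_0=0$, and using $\tA_2 v=\tA_2 v_0$ together with $\crosPhi{\tA_2 v_0}{w}=0$ (the mean of $\tA_2 v_0$ vanishes and the $\bH^{-1}_w$-primitive of $w-(\intS w)w=0$ is $0$), one gets $\crosPhi{\tA_2 v}{v}=\crosPhi{\tA_2 v_0}{v_0}$; the $\bH^{-1}_w$-primitive of $\tA_2 v_0$ is $\half w\,\partial_\tta(v_0/w)$, so one integration by parts on $\Sun$ yields
\[
-\,\crosPhi{\tA_2 v}{v}\;=\;\half\intS\frac{v_0^2}{w}\dd\tta\;=\;\half\,\NdPhi{v_0}^2\;\geq\;0,
\]
with equality iff $v_0\equiv0$. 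In particular the bilinear form $(v,v')\mapsto-\crosPhi{\tA_2 v}{v'}$ is symmetric and nonnegative on $\mC^2(\Sun)$. From here the structural statements follow exactly as in \cite{BGP} (in a strictly simpler setting): the form is closable in $\HsPhi$, its closure defines a nonnegative self-adjoint operator, and $\mC^2(\Sun)$ is a core because for $\lambda>0$ the ODE $\lambda v-\tA_2 v=g$ is uniquely solvable on $\Sun$ (with $\mC^\infty$ solution for smooth $g$), so $(\lambda-\tA_2)\mC^2(\Sun)$ is dense; hence $\tA_2$ is essentially self-adjoint. For compactness of the resolvent: if $v$ stays bounded in the form norm, then $\intS v$ is bounded, $v_0$ is bounded in $\bL^2(\Sun)$ and the $\bH^{-1}_w$-primitive $\mathcal{V}_0$ of $v_0$ is bounded in $\bH^1(\Sun)$, so Rellich's theorem makes $v$ range in a precompact subset of $\HsPhi$. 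Consequently $\sigma(\tA_2)$ is a discrete set of finite-multiplicity eigenvalues, nonpositive by the identity above, with $\Ker\tA_2=\Span(w)$ one-dimensional.

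The spectral gap estimate \eqref{eq:estimgapA2} then reduces, via that identity, to the weighted Wirtinger inequality $\intS\frac{(V')^2}{w}\dd\tta\geq e^{-4Kr_0}\intS\frac{V^2}{w}\dd\tta$ for $2\pi$-periodic $V$ with $\intS(V/w)\dd\tta=0$, applied to $V=\mathcal{V}_0$ (for which $V'=v_0$ and $\intS V^2/w\,\dd\tta=\NsPhi{v_0}^2$). I would prove it with $\rho:=1/w\propto e^{\Phi}$, so $\rho_{\max}/\rho_{\min}=e^{4Kr_0}$: the constraint $\intS V\rho\,\dd\tta=0$ says $0$ is the $\rho$-barycentre of $V$, hence $\intS V^2\rho\,\dd\tta\leq\intS(V-\bar V)^2\rho\,\dd\tta\leq\rho_{\max}\intS(V-\bar V)^2\dd\tta$ with $\bar V$ the Lebesgue mean of $V$, while the classical Wirtinger inequality on $\Sun$ gives $\intS(V')^2\rho\,\dd\tta\geq\rho_{\min}\intS(V')^2\dd\tta\geq\rho_{\min}\intS(V-\bar V)^2\dd\tta$; dividing gives the constant $\rho_{\min}/\rho_{\max}=e^{-4Kr_0}$. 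Plugging back, $-\crosPhi{\tA_2 v}{v}=\half\NdPhi{v_0}^2\geq\frac{e^{-4Kr_0}}{2}\NsPhi{v-(\intS v)w}^2$, which is \eqref{eq:estimgapA2}; since the $\bH^{-1}_w$-orthogonal of $w$ is exactly $\{v:\intS v=0\}$, the min-max principle upgrades this to $\gapAs\geq\frac{e^{-4Kr_0}}{2}$ and places $\sigma(\tA_2)\setminus\{0\}$ in $(-\infty,-\frac{e^{-4Kr_0}}{2}]$.

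Finally, since $-\tA_2$ is nonnegative self-adjoint on $\HsPhi$, the spectral theorem bounds $\NsPhi{R(\lambda,\tA_2)}$ by the reciprocal of the distance from $\lambda$ to $(-\infty,0]$; for $\lambda\in\Sigma_\alpha$ an elementary geometric estimate shows this distance is at least $|\lambda|\cos\alpha\geq|\lambda|(1-\sin\alpha)$, giving \eqref{eq:estim resolvent A2 limfluct}, and the generation of an analytic semigroup on a sector $\Delta_{\alpha'}$ is the standard consequence (\cite{Lunardi1995, Pazy1983}), exactly as for $\tA_1$ in Proposition~\ref{prop:A1 semigroup}. The only genuinely delicate point is the ``follow \cite{BGP}'' part of the second paragraph — verifying essential self-adjointness, the core property, and compactness of the resolvent in the negative-index space $\HsPhi$ — but that argument is strictly easier than for $L_{q_0}$ and transfers essentially verbatim; everything else is explicit computation.
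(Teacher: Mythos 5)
Your proof is correct and follows the same overall strategy as the paper --- rewrite $\tA_2$ in divergence form, compute the Dirichlet form $-\crosPhi{\tA_2 v}{v}=\tfrac12\NdPhi{v_0}^2$, use the zero-mean decomposition, establish a Wirtinger-type inequality for the gap, Rellich/Ascoli for compactness, and a spectral bound for the resolvent --- but two of the steps are handled differently, and arguably more cleanly, than in the paper.

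For the spectral gap, the paper bounds $\intS e^{\Phi}v_0^2\geq e^{-2Kr_0}\intS\mV_0^2\geq e^{-4Kr_0}\intS e^{\Phi}\mV_0^2$, attributing the first step to the Poincar\'e inequality. That step amounts to $\intS v_0^2=\intS(\mV_0')^2\geq\intS\mV_0^2$, which is the unweighted Wirtinger inequality for a function whose \emph{Lebesgue} mean vanishes; however $\mV_0$ is normalized so that $\intS \mV_0/w=0$, i.e.\ it has zero $e^{\Phi}$-weighted mean, and the two normalizations differ. Your barycenter argument sidesteps this: since $0$ is the $\rho$-mean of $V=\mV_0$ (with $\rho=1/w$), the $\rho$-second moment is smallest when centered at $0$, so $\intS V^2\rho\leq\intS(V-\bar V)^2\rho\leq\rho_{\max}\intS(V-\bar V)^2$, while the ordinary Wirtinger applied to $V-\bar V$ gives $\intS(V')^2\rho\geq\rho_{\min}\intS(V-\bar V)^2$. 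Dividing yields exactly the paper's constant $\rho_{\min}/\rho_{\max}=e^{-4Kr_0}$ without any normalization mismatch. This is a genuine fix of a gap in the paper's own chain of inequalities.

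For the resolvent estimate, you use directly that $-\tA_2$ is nonnegative self-adjoint, so $\NsPhi{R(\lambda,\tA_2)}\leq 1/\mathrm{dist}(\lambda,(-\infty,0])$, and the elementary geometry of $\Sigma_\alpha$ gives $\mathrm{dist}(\lambda,(-\infty,0])\geq|\lambda|\cos\alpha\geq|\lambda|(1-\sin\alpha)$. The paper instead runs (for $\tA_1$, and says ``same argument'' for $\tA_2$) a Taylor-expansion/Neumann-series argument around a point in the right half-plane in the spirit of Pazy. Both are valid; the spectral-theorem route is shorter, at the price of leaning more heavily on self-adjointness (which of course holds here). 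Your remark that $w=q_0$, hence $\HsPhi=\Hsq$, is also correct and worth noting; the paper does not make it explicit. The remaining pieces (closability, essential self-adjointness via solving $(\lambda-\tA_2)v=g$ for $\lambda>0$ and density of the range, compactness of the resolvent via Rellich) coincide in substance with the paper's Lax--Milgram/Ascoli argument.
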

Putting things together, the natural norm for the operator $\tA=(\tA_1, \tA_2)$ is the Hilbert-norm: $\left(\Nsq{u}^2+\NsPhi{v}^{2}\right)^{\frac{1}{2}}$, $(u,v)\in \tmD$. But since $\tA$ is the conjugate of $A$ through the invertible matrix $M$, to say that $\tA$ is essentially
self-adjoint
for the previous norm is equivalent to say that $A$ is essentially self-adjoint for the corresponding conjugate norm:
\begin{equation}
\label{eq:norm for A limfluct}
\forall h\in\mD,\ \NHw{h}:= \left(\Nsq{\frac12\left(h(\cdot, +\om_0) +h(\cdot, -\om_0)\right)}^2 +\NsPhi{\frac12\left(h(\cdot,
+\om_0)-h(\cdot, -\om_0)\right)}^{2} \right)^{\frac{1}{2}}.
\end{equation}
The results of \S~\ref{subsec:spectral properties A limfluct} can be summed-up in the following proposition, which is an easy consequence of
Propositions~\ref{prop:A1 semigroup} and~\ref{prop:A2selfadj}:

\begin{prop}
 \label{prop:Aselfadj}
For the norm $\NHw{\cdot}$ defined in \eqref{eq:norm for A limfluct}, the operator $(A, \mD)$ is essentially self-adjoint, with compact
resolvent. The spectrum of (the self-adjoint extension of) $A$ is pure-point, and consists of eigenvalues with finite
multiplicities. Moreover it lies in the negative part of the real-axis and $A$ is the infinitesimal generator of an analytic semigroup of
operators $T_A(z)$ defined on a domain
$\Delta_\alpha=\ens{z\in\bC}{|\arg(z)|<\alpha}$, for any $0<\alpha<\frac{\pi}{2}$. One also has the following estimate about the resolvent of
$A$:
\begin{equation}
\label{eq:estim resolvent A limfluct}
\forall \alpha\in\left(0, \frac\pi2\right),\ \forall \lambda\in\Sigma_\alpha,\ \NHw{R(\lambda, A)}\, \leq\, \frac{1}{1-\sin(\alpha)}\cdot
\frac{1}{|\lambda|}\, .
\end{equation}
The kernel of $A$ is of dimension $2$, spanned by $\left\{ \partial_\tta q_{0}+\frac{e^{-\Phi}}{\intS e^{-\Phi}},
\partial_\tta q_0-\frac{e^{-\Phi}}{\intS e^{-\Phi}}
\right\}$ and the eigenvalue $0$ is separated from the rest
of the spectrum with a distance $\gapA:=\min\left( \gapAf, \gapAs\right)$, where $\gapAf$ and $\gapAs$ are defined in
Propositions~\ref{prop:A1 semigroup} and~\ref{prop:A2selfadj} respectively.
\end{prop}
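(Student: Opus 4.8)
The plan is to transport every claim back to the two operators $\tA_1,\tA_2$ already handled in Propositions~\ref{prop:A1 semigroup} and~\ref{prop:A2selfadj}. The key observation is pure bookkeeping: by the very definition of the norm $\NHw{\cdot}$ in \eqref{eq:norm for A limfluct}, the pointwise matrix map $h\mapsto M\cdot h$ is an \emph{isometric isomorphism} from $(\mD,\NHw{\cdot})$ onto $\tmD$ endowed with the product Hilbert norm $(u,v)\mapsto(\Nsq{u}^2+\NsPhi{v}^2)^{1/2}$ on $\Hsq\times\HsPhi$; moreover, by \eqref{eq:defAselfadj}, this map conjugates $A$ into the orthogonal direct sum $\tA=\tA_1\oplus\tA_2$. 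Hence every assertion of the Proposition about $(A,\mD)$ in the Hilbert space $\Hw$ (the completion of $(\mD,\NHw{\cdot})$) is \emph{equivalent} to the corresponding assertion about $\tA$ on $\Hsq\times\HsPhi$, and it suffices to prove the latter.

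Next I would invoke the standard structure of orthogonal direct sums. The domain $\tmD$ in \eqref{eq:defdomainDtilde} is exactly the algebraic product of $\mathrm{dom}(\tA_1)=\ens{u\in\mC^2(\Sun)}{\intS u=0}$ and $\mathrm{dom}(\tA_2)=\mC^2(\Sun)$, so the closure of $\tA$ equals $\overline{\tA_1}\oplus\overline{\tA_2}$, which is self-adjoint because each summand is (Propositions~\ref{prop:A1 semigroup}--\ref{prop:A2selfadj}); thus $(\tA,\tmD)$, and therefore $(A,\mD)$, is essentially self-adjoint. For $\lambda\in\rho(\tA_1)\cap\rho(\tA_2)$ one has $R(\lambda,\tA)=R(\lambda,\tA_1)\oplus R(\lambda,\tA_2)$, a direct sum of compact operators, hence compact; so the closed extension of $A$ has compact resolvent, its spectrum is pure point with finite multiplicities, and $\sigma(\tA)=\sigma(\tA_1)\cup\sigma(\tA_2)\subseteq(-\infty,0]$. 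Likewise $\Ker\tA=\Ker\tA_1\oplus\Ker\tA_2$ is two-dimensional, spanned by $(\partial_\tta q_0,0)$ and $(0,w)$; transported back through $M^{-1}$ this becomes the span $\{\partial_\tta q_0+w,\ \partial_\tta q_0-w\}$ claimed in the statement (with $w=\frac{e^{-\Phi}}{\intS e^{-\Phi}}$). Finally, the distance from $0$ to $\sigma(\tA)\smallsetminus\{0\}$ is the minimum of the two spectral gaps, namely $\gapA=\min(\gapAf,\gapAs)$.

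It remains to record the resolvent estimate and the analytic-semigroup property. Fix $\alpha\in(0,\frac\pi2)$. Since the resolvent of an orthogonal direct sum has norm equal to the maximum of the norms of its summands, \eqref{eq:estim resolvent A1 limfluct} and \eqref{eq:estim resolvent A2 limfluct} give at once, for $\lambda\in\Sigma_\alpha$,
\[
\NHw{R(\lambda,A)}=\max\!\big(\Nsq{R(\lambda,\tA_1)},\,\NsPhi{R(\lambda,\tA_2)}\big)\leq\frac{1}{1-\sin(\alpha)}\cdot\frac{1}{|\lambda|},
\]
which is \eqref{eq:estim resolvent A limfluct}. A uniform bound $C/|\lambda|$ on the sector $\Sigma_\alpha$, valid for every $\alpha<\frac\pi2$, is precisely the hypothesis of the classical generation theorem for analytic semigroups (see~\cite{Lunardi1995,Pazy1983}), and yields that $A$ generates an analytic semigroup $T_A(z)$ on $\Delta_\alpha=\ens{z\in\bC}{|\arg(z)|<\alpha}$, for each such $\alpha$.

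The argument is entirely routine, all the analytic content having been absorbed into Propositions~\ref{prop:A1 semigroup} and~\ref{prop:A2selfadj}; I do not expect a genuine obstacle. The only two points deserving care are that $h\mapsto M\cdot h$ is truly an \emph{isometry} for $\NHw{\cdot}$ — which is built into definition \eqref{eq:norm for A limfluct} — and that passing from the sectorial resolvent bound on $\Sigma_\alpha$ to analyticity on $\Delta_\alpha$ uses the correct correspondence of opening angles; since $\alpha$ ranges over all of $(0,\frac\pi2)$, this is exactly the textbook statement.
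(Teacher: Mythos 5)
Your proof is correct and is essentially the paper's own argument made explicit: the paper dismisses Proposition~\ref{prop:Aselfadj} as ``an easy consequence of Propositions~\ref{prop:A1 semigroup} and~\ref{prop:A2selfadj},'' and what you have written is precisely the bookkeeping that the phrase conceals — the isometric conjugation by $M$, the fact that closability, compact resolvent, self-adjointness, spectrum, kernel and resolvent bounds all pass through an orthogonal direct sum in the obvious way, and the Pazy correspondence between the sectorial resolvent bound on $\Sigma_\alpha$ and the analyticity domain $\Delta_\alpha$. One small point worth spelling out (you have it right but it is easy to misread the statement): the two kernel vectors are the elements of $\mD$ whose components at $\pm\om_0$ are $\left(\partial_\tta q_0 + w,\ \partial_\tta q_0 - w\right)$ and $\left(\partial_\tta q_0 - w,\ \partial_\tta q_0 + w\right)$ — the $\om$-independent function $(\tta,\om)\mapsto \partial_\tta q_0(\tta)+w(\tta)$ is not even in $\mD$ since its $\mu$-averaged integral over $\Sun$ equals $1$, so only the pair interpretation is admissible, and it is exactly the one $M^{-1}$ produces from $(\partial_\tta q_0, 0)$ and $(0,w)$.
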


\begin{rem}\rm
The norm $\NHw{\cdot}$ is equivalent to the norm $\NH{\cdot}$ defined in \S~\ref{subsubsec:space H}, since the weights
$q_0$ and $w$ are bounded above and below. In $\Hsmu$, the operator $A$ (although no longer self-adjoint) still
generates an analytic semi-group with the same spectrum and the same spectral gap.
\end{rem}
\noindent The aim of paragraphs \S~\ref{subsubsec:diagA1} (resp. \S~\ref{subsubsec:diagA2}) is to prove Proposition~\ref{prop:A1 semigroup}
(resp.
Proposition~\ref{prop:A2selfadj}).
\subsubsection{Spectral properties of $\tA_1$: proof of Proposition~\ref{prop:A1 semigroup}}
\label{subsubsec:diagA1}
As $\tA_1=L_{q_{0}}$ corresponds to the linear evolution operator of the non-disordered Kuramoto model studied in~\cite{BGP}, we know from Proposition~\ref{prop:BGP} that $\tA_1$ is essentially self-adjoint and
dissipative in $\Hsq$. It remains to prove that $\tA_1$ generates an analytic semigroup $\tT_1(t)$ in an appropriate domain. We refer to
classical references \cite{Hille1957, Lunardi1995, Pazy1983} for detailed definitions of analytic semigroups of operators defined on a sector of the complex plane. We recall the following result about analytic extensions of strongly continuous semigroups. 
\begin{prop}[{\cite[Th 5.2, p.61]{Pazy1983}}]
 \label{prop:pazysemgps}
Let $T(t)$ a uniformly bounded strongly continuous semigroup, whose infinitesimal generator $F$ is such that $0\in\rho(F)$ and let
$\alpha\in(0, \frac\pi2)$. The
following statements are equivalent:
\begin{enumerate}
 \item \label{it:prop:pazysemgps1}$T(t)$ can be extended to an analytic semigroup
in the sector $\Delta_\alpha\, =\, \ens{\lambda\in\bC}{|\arg(\lambda)|<\alpha}$ and $\Vert{T(z)}\Vert$ is uniformly bounded in every closed
sub-sector $\bar{\Delta}_{\alpha'}$, $\alpha'<\alpha$, of $\Delta_\alpha$,
\item \label{it:prop:pazysemgps2} There exists $M>0$ such that\begin{equation}\rho(F) \supset \Sigma_\alpha\, =\,
\ens{\lambda
\in\bC}{|\arg(\lambda)|<\frac{\pi}{2}+\alpha} \cup \{0\},\end{equation}and\begin{equation}\Vert{R(\lambda, F)}\Vert \, \leq\, 
\frac{M}{|\lambda|}, \quad
\lambda\in\Sigma, \lambda\neq0\, .\end{equation}
\end{enumerate}
\end{prop}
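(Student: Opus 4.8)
The plan is to prove the stated equivalence by establishing each implication separately; the common engine is the Dunford--Cauchy representation of the semigroup as a contour integral of the resolvent. Throughout, $T(t)$ is a bounded linear operator on the underlying Banach space $X$ with $\sup_{t\geq0}\Vert T(t)\Vert<\infty$, and $0\in\rho(F)$ is part of the hypotheses.

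\emph{Proof that \ref{it:prop:pazysemgps2} implies \ref{it:prop:pazysemgps1}.} Assume $\rho(F)\supset\Sigma_\alpha$ with $\Vert R(\lambda, F)\Vert\leq M/|\lambda|$ on $\Sigma_\alpha\smallsetminus\{0\}$. Fix $\delta\in(0,\alpha)$ and let $\Gamma=\Gamma_\delta$ be the standard contour: the positively oriented boundary of $\ens{\lambda\in\bC}{|\arg\lambda|<\frac{\pi}{2}+\delta}$, modified near the origin by a small circular arc so that $\Gamma\subset\Sigma_\alpha$ and $\Gamma$ winds once around $0$. For $z$ in the open sector of half-angle $\gamma$, with $0<\gamma<\delta$, set
\[
T(z) := \frac{1}{2\pi i}\int_{\Gamma} e^{\lambda z}R(\lambda, F)\dd\lambda .
\]
On the two infinite rays of $\Gamma$ one has $\Re(\lambda z)\leq -c(\delta,\gamma)\,|\lambda|\,|z|$ with $c(\delta,\gamma)>0$, so together with $\Vert R(\lambda,F)\Vert\leq M/|\lambda|$ the integral converges absolutely and locally uniformly in $z$; it defines a bounded operator, analytic in $z$, whose value is independent of $\delta$ by Cauchy's theorem. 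Letting $\delta\uparrow\alpha$ and $\gamma\uparrow\delta$ extends $T(\cdot)$ to an analytic function on $\Delta_\alpha$, uniformly bounded on each $\bar{\Delta}_{\alpha'}$ (estimate the ray integrals directly). The semigroup law $T(z_1)T(z_2)=T(z_1+z_2)$ is obtained by writing the product as a double contour integral over $\Gamma\times\Gamma'$, with $\Gamma'$ a contour of the same type lying strictly inside $\Gamma$, applying the resolvent identity $R(\lambda)R(\mu)=(R(\mu)-R(\lambda))/(\lambda-\mu)$ and Fubini, and evaluating the two inner integrals by closing contours. Strong continuity $T(z)x\to x$ as $z\to0$ in a closed subsector is verified first for $x\in\mD(F)$, where $R(\lambda,F)x=\lambda^{-1}x+\lambda^{-1}R(\lambda,F)Fx$ isolates the leading term whose integral against $e^{\lambda z}$ is exactly $x$, the remainder vanishing by dominated convergence; the general case follows by density and the uniform bound. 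Finally one checks that the infinitesimal generator of $(T(t))_{t>0}$ is $F$, e.g. by verifying $\int_0^\infty e^{-\mu t}T(t)\dd t = R(\mu, F)$ for $\Re\mu$ large via Fubini and contour closure; since a $C_0$-semigroup is determined by its generator, this contour-defined family restricted to $(0,\infty)$ coincides with the original $T(t)$, which is therefore the asserted analytic extension.

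\emph{Proof that \ref{it:prop:pazysemgps1} implies \ref{it:prop:pazysemgps2}.} Suppose $T(\cdot)$ extends analytically to $\Delta_\alpha$, uniformly bounded on each $\bar{\Delta}_{\alpha'}$. For $|\theta|<\alpha$ the family $t\mapsto T(te^{i\theta})$ is strongly continuous and uniformly bounded, and by deforming the half-line in the Laplace representation $R(\mu, F)=\int_0^\infty e^{-\mu t}T(t)\dd t$ (valid for $\Re\mu>0$) onto the ray $\arg z=\theta$ --- legitimate by Cauchy's theorem, using the boundedness of $T$ on the closed subsector between the positive axis and that ray together with the decay of $e^{-\mu z}$ --- one obtains, for all $\mu$ with $\Re(\mu e^{i\theta})>0$,
\[
R(\mu, F) = e^{i\theta}\int_0^\infty e^{-\mu t e^{i\theta}}T(te^{i\theta})\dd t ,
\]
so that $\mu\in\rho(F)$ and $\Vert R(\mu, F)\Vert\leq M_{\alpha'}/\Re(\mu e^{i\theta})$ whenever $|\theta|\leq\alpha'<\alpha$. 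Letting $\theta$ range over $(-\alpha,\alpha)$ gives $\rho(F)\supset\ens{\mu\in\bC}{|\arg\mu|<\frac{\pi}{2}+\alpha}$, and for each such $\mu$ choosing $\theta=-\arg\mu$ makes $\Re(\mu e^{i\theta})=|\mu|$, yielding $\Vert R(\mu, F)\Vert\leq M/|\mu|$ on that sector; combined with $0\in\rho(F)$ and continuity of the resolvent near $0$ this gives $\rho(F)\supset\Sigma_\alpha$ with the stated estimate after a harmless enlargement of $M$.

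\emph{Main difficulty.} The hard part will be the implication \ref{it:prop:pazysemgps2}$\Rightarrow$\ref{it:prop:pazysemgps1}: making the contour manipulations rigorous --- absolute convergence and $\delta$-independence of the defining integral, the passage $\delta\uparrow\alpha$, and especially the double-contour computation behind the semigroup law --- and, above all, verifying that the generator of the contour-defined family is \emph{exactly} $F$. Only this last point, together with uniqueness of a $C_0$-semigroup with prescribed generator, ensures that the constructed analytic family genuinely extends the given $T(t)$ rather than being an a priori unrelated analytic semigroup.
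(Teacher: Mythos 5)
This proposition is quoted verbatim from Pazy (Th.~2.5.2) and the paper offers no proof of it, so there is nothing internal to compare against; your argument is a correct reconstruction of the classical proof, using the Dunford contour integral $\frac{1}{2\pi i}\int_\Gamma e^{\lambda z}R(\lambda,F)\,\dd\lambda$ for \ref{it:prop:pazysemgps2}$\Rightarrow$\ref{it:prop:pazysemgps1} and rotation of the Laplace representation of the resolvent onto the rays $\arg z=\theta$ for the converse. The one step you dismiss with ``estimate the ray integrals directly'' that genuinely needs care is the uniform bound on $\Vert T(z)\Vert$ as $z\to0$ in a closed subsector: the naive bound produces $\int_{r_0}^{\infty}e^{-cr|z|}\,\dd r/r$, which diverges logarithmically as $|z|\to0$, and one must rescale the contour (place the joining arc at radius $1/|z|$), which is legitimate precisely because the integral is independent of the admissible contour.
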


We are now in position to prove the rest of Proposition~\ref{prop:A1 semigroup}: we know from~\cite[Prop. 2.3, Prop. 2.6]{BGP} that for any
$\lambda>0$, $\lambda-\tA_1$ is positive with range $\Hsq$. 
Consequently, we can apply Lumer-Phillips Theorem (see~\cite[Th 4.3 p.14]{Pazy1983}): $\tA_1$
is the infinitesimal generator of a $C_0$ semi-group of contractions denoted by $\tT_1(t)$.

The rest of the proof is devoted to show the existence of an analytic extension of this semigroup in a proper sector. We follow
here the lines of the proof of~\cite[Th 5.2, p. 61-62]{Pazy1983}, but with explicit estimates on the resolvent: let us first replace the
operator $\tA_1$ by a small perturbation: for all $\eps>0$, let $\tA_{1,\eps}:= \tA_1-\eps$, so that
$0$ belongs to $\rho(\tA_{1,\eps})$. As $\tA_1$, the operator $\tA_{1, \eps}$ is self-adjoint and generates a strongly continuous semigroup of operators (which is $\tT_{1,\eps}(t)=\tT_1(t)e^{-\eps t}$). Moreover, since $\tA_{1, \eps}$ is self-adjoint, we have
\begin{equation}
\label{eq:estimRA1eps1}
\forall
\lambda\in\bC\smallsetminus\bR, \Nsq{R(\lambda, \tA_{1,\eps})}\leq
\frac{1}{|\Im(\lambda)|}, 
\end{equation}
and since the spectrum of $\tA_{1, \eps}$ is negative, for every $\lambda\in\bC$ such that $\Re(\lambda)>0$, we have 
\begin{equation}
\label{eq:estimRA1epsdef1}
\Nsq{R(\lambda, \tA_{1,\eps})}\leq \frac{1}{|\lambda|}.                                     
\end{equation}
Let us prove that for $\lambda\in\Sigma_\alpha$,
\begin{equation}
\label{eq:estim resolvent A1 eps limfluct}
 \Nsq{R(\lambda, \tA_{1, \eps})}\, \leq\, \frac{1}{1-\sin(\alpha)}\cdot \frac{1}{|\lambda|}\, .
\end{equation}
Note that \eqref{eq:estim resolvent A1 eps limfluct} is clear from \eqref{eq:estimRA1eps1} and \eqref{eq:estimRA1epsdef1} when $\Re(\lambda)\geq0$. Let us prove it for $\lambda\in\Sigma_\alpha$ with $\Re(\lambda)<0$. Consider $\sigma>0, \tau\in\bbR$ to be chosen
appropriately later and write the following Taylor expansion for
$R(\lambda, \tA_\eps)$ around $\sigma+i\tau$ (at least well defined in a neighborhood of $\sigma+i\tau$ since $\sigma>0$):
\begin{equation}
\label{eq:taylor resolvent}
 R(\lambda, \tA_{1, \eps}) \, =\,  \sum_{n=0}^{\infty}{R(\sigma+i\tau, \tA_{1, \eps})^{n+1}((\sigma + i\tau)-\lambda)^n}\, .
\end{equation}
This series $R(\cdot, \tA_{1, \eps})$ is well defined in $\lambda\in\Sigma_\alpha$ with $\Re(\lambda)<0$ if one can choose $\sigma$, $\tau$ and
$k\in(0,1)$ such that $\Nsq{R(\sigma
+i\tau, \tA_{1, \eps})}|\lambda-(\sigma+i\tau)|\leq k<1$. In particular, using \eqref{eq:estimRA1eps1}, it suffices to have
$|\lambda-(\sigma+i\tau)|\leq k|\tau|$ and since $\sigma>0$ is
arbitrary, it suffices to find $k\in(0,1)$ and $\tau$ with $|\lambda-i\tau|\leq k|\tau|$ to obtain the convergence of \eqref{eq:taylor
resolvent}.
For this $\lambda\in\Sigma_\alpha$ with $\Re(\lambda)<0$, let us define $\lambda'$ and $\tau$ as in Figure~\ref{fig:angle alpha}. Then,
$|\lambda-i\tau|\leq |\lambda'-i\tau|= \sin(\alpha)|\tau|$ with $\sin(\alpha)\in(0,1)$. So
the series converges for $\lambda\in\Sigma_\alpha$ and one has, using again \eqref{eq:estimRA1eps1},
\begin{equation}
 \label{eq:estimRAepsdef}
\Nsq{R(\lambda, \tA_{1, \eps})} \, \leq\,  \frac{1}{(1-\sin(\alpha))|\tau|} \, \leq\,  \frac{1}{1-\sin(\alpha)}
\cdot\frac{1}{|\lambda|}\, .
\end{equation}
\begin{figure}[!ht]
 \centering
\includegraphics[width=0.5\textwidth]{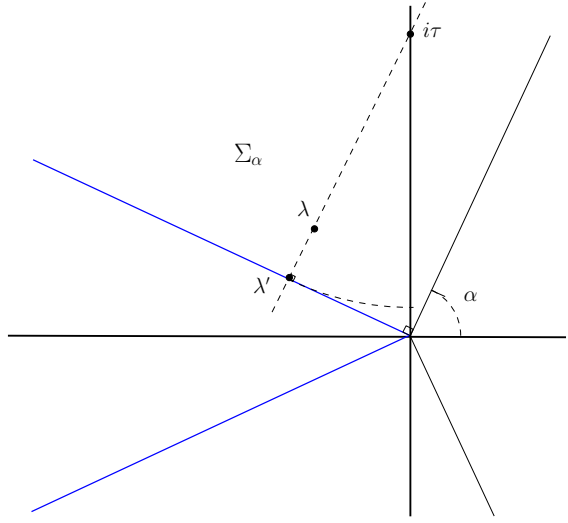}
\caption{The set $\Sigma_\alpha$.}
\label{fig:angle alpha}
\end{figure}
The fact that $\tT_{1, \eps}(t)$ can be extended to an analytic semigroup $\tilde{T}_{1, \eps}(z)$ on the domain $\Delta_\alpha$ is a
simple application of \eqref{eq:estim resolvent A1 eps limfluct} and Proposition~\ref{prop:pazysemgps}, with $M:=\frac{1}{1-\sin(\alpha)}$.
Let us then define $\tilde{T}_{1}(z):= e^{\eps z} \tilde{T}_{1, \eps}(z)$, for $z\in\Delta_\alpha$ so that $\tilde{T}_{1}(z)$ is
an analytic extension of $\tT_{1}(t)$ (an argument of analyticity shows that $\tilde{T}_{1}(z)$ does not depend on $\eps$).
\medskip

\noindent 
Note that estimation \eqref{eq:estim resolvent A1 limfluct} can be obtained by letting $\eps\to0$ in \eqref{eq:estim resolvent A1 eps
limfluct}.\qed

\subsubsection{Spectral properties of $\tA_2$: proof of Proposition~\ref{prop:A2selfadj}}
\label{subsubsec:diagA2}

$\tA_2$ may be written as \begin{equation}
\label{eq:defA2}
 \tA_2v= \half \partial_\tta^2v + \partial_\tta\left( vKr_0\sin(\cdot)\right),            
\end{equation}
where $r_0=\Psi_0(2Kr_0)$ (recall \eqref{eq:Psizero limfluct}). One recognizes in $\tA_2$ a  Fokker-Planck operator on $\mC^2(\Sun)$ with a sine potential. This operator can easily be seen, by integrations by parts in an appropriate weighted $\bL^2$-space, as a Sturm-Liouville operator (\cite{Dunford1988,Coddington1955}) acting on $\mC^2$, $2\pi$-periodic functions. The problem is that a $\bL^2$-norm is not appropriate for the future study of the SPDE \eqref{eq:SPDE fluctuations limfluct}: a look at the covariance structure of the noise $W$
(see \eqref{eq:W process intro limfluct}) shows that $W$ naturally lives in a $\bH^{-1}$-space instead of a $\bL^2$-space.

\medskip

An easy calculation shows that $\tA_2$ can be rewritten in terms of the weight function $\Phi$ defined in \eqref{eq:weight Phi limfluct}:
\begin{equation}
\label{eq:A2 partial Phi}
\tA_2v = \half\partial_\tta\left( e^{-\Phi} \partial_\tta\left( e^{\Phi}v \right) \right). 
\end{equation}
Let $w$ be:
\begin{equation}
 \label{eq:v0}
w(\tta):= \frac{e^{-\Phi(\tta)}}{\intS e^{-\Phi}}.
\end{equation}
One directly sees from \eqref{eq:A2 partial Phi} that $w$ lies in the kernel of $\tA_2$: $\tA_2w=0$. We are now in position to prove Proposition~\ref{prop:A2selfadj}: we place ourselves in the framework of the weighted Sobolev spaces $\left(\LdPhi, \crodPhi{\cdot}{\cdot}\right)$ and $\left(\HsPhi, \crosPhi{\cdot}{\cdot}\right)$, in the particular case of $k(\cdot)=w(\cdot)$. 
\begin{proof}[Proof of Proposition~\ref{prop:A2selfadj}]
In $\HsPhi$, the operator $(\tA_2, \mC^2(\Sun))$ is formally symmetric: for $u$, $v\in \mC^2(\Sun)$, for $u_0$ ad $v_0$ defined by
\eqref{eq:decomposition h hzero k}, we have successively,
\begin{align}
 \crosPhi{\tA_2u}{v} &= \left(\intS e^{-\Phi}\right)\intS e^{\Phi}\left( \frac 12 e^{-\Phi} \partial_\tta\left(
e^{\Phi}u \right)\right) \mV_0= -\frac 12 \left(\intS e^{-\Phi}\right)\intS  e^{\Phi}u v_0,\nonumber\\
&= -\frac 12 \intS  \frac{u_0 v_0}{w} -\frac 12 \intS u \intS v_0= -\frac 12 \intS 
\frac{u_0 v_0}{w}.\label{eq:A2symmetric}
\end{align}
\noindent Let us prove that $(\tA_2, \mC^2(\Sun))$ is essentially self-adjoint: let $\mE_2$ be the following Dirichlet form
\begin{equation}
\label{eq:dirichlet form for A2}
 \mE_2(u,v):= \crosPhi{u}{(1-\tA_2)v}= \intS u \intS v + \intS \frac{\mV_0 \mU_0}{w} + \frac12 \intS \frac{u_0 v_0}{w}.
\end{equation}
Then it is easy to see that $\mE_2$ is a continuous bilinear form on $\LdPhi$ (thanks to Poincar\'e inequality). Moreover $\mE_2$ is coercive: for all $u\in\LdPhi$
\begin{align}
 \mE_2(u,u) &= \left( \intS u \right)^2 + \intS \frac{\mU_0^2}{w}+ \frac12 \intS \frac{u_0^2}{w},\nonumber\\
&\geq \left( \intS u \right)^2 + \frac12 \NdPhi{u-\left( \intS u \right)w}^2\geq \frac12 \NdPhi{u}^2.\label{eq:estim bellow Dir form A2
limfluct}
\end{align}
Since for all $f\in \HsPhi$, the linear form $v\mapsto \crosPhi{v}{f}$ is continuous on $\LdPhi$, an application of
Lax-Milgram Theorem shows that for such an $f\in \HsPhi$ there exists an unique $u\in\LdPhi$ such that for all $v\in\LdPhi$
\begin{equation}
 \mE_2(v, u) = \crosPhi{v}{f}.
\end{equation}
It is then easy to see that $\intS f= \intS u$ and that for almost every $\tta\in\Sun$,
\begin{equation}
\label{eq:relation u f mF}
\frac12 \frac{u_0(\tta)}{w(\tta)} = -\int_{0}^{\tta}  \frac{\mF_0}{w} + \int_{0}^{\tta}\frac{\mU_0}{w}.
\end{equation}
Since $u\in\LdPhi$, $\mU_0$ admits a $\mC^1$-version and if we assume that $f$ is square-integrable, the same argument holds for the first
term of the right-hand side of \eqref{eq:relation u f mF}. So, if $f$ is square integrable, $u_0$ admits a $\mC^2$-version.
To sum-up, if we suppose that $f$ is continuous, there exists $u\in\mC^2(\Sun)$ such that, applying $\partial_\tta\left( e^{-\Phi}
\partial_\tta(\cdot) \right)$ to \eqref{eq:relation u f mF}:
\begin{align}
 f&= f_0 + \left( \intS f \right)w = -\frac12 \partial_\tta\left( e^{-\Phi} \partial_\tta\left( e^{\Phi} u_0\right) \right) +u_0 +
p_w(u)w,\nonumber\\
&= -\tA_2u_0 + u = (1-\tA_2)u.
\end{align}
But since those functions $f$ are dense in $\HsPhi$, we see that the range of $1-\tA_2$ is dense so that $\tA_2$ is essentially
self-adjoint.

Secondly, the spectral gap estimation \eqref{eq:estimgapA2} holds: for every $u\in\mC^2(\Sun)$, we have using
\eqref{eq:A2symmetric} and Poincar\'e inequality:
\begin{align*}
-\crosPhi{\tA_2v}{v} &= \frac 12 \left( \intS e^{-\Phi} \right)\intS e^{\Phi} v_0^2,\\
&\geq \frac 12 e^{-2Kr_0}\left( \intS e^{-\Phi} \right) \intS  \mV_0^2\geq \frac 12 e^{-4Kr_0} \left( \intS e^{-\Phi} \right) \intS 
e^{\Phi} \mV_0^2,\\
&= \frac 12 e^{-4Kr_0} \NsPhi{v - \left( \intS v \right)w}^2.
\end{align*}
Moreover, $\tA_2$ has compact resolvent: it suffices to prove that $\lambda-\tA_2$ has compact resolvent for at least one value of
$\lambda$. We prove it for $\lambda=1$ which is indeed in the resolvent set, thanks to the beginning of this proof.
For $u\in \HsPhi$, let us consider $f:= (1-\tA_2)^{-1}u$ so that $\crosPhi{f}{(1-\tA_2)f} = \crosPhi{f}{u}$. Using the coercivity of $\mE_2$, one has, $c\NdPhi{f}^{2}\leq \crosPhi{f}{u} \leq \NsPhi{f}\NsPhi{u}$, for some constant $c$. Using the continuous injection of $\LdPhi$ into $\HsPhi$ (say $\NsPhi{\cdot} \leq C \NdPhi{\cdot}$, for some positive constant $C$), one has
\begin{equation}
 \NdPhi{f}\leq \frac{C}{c}\NsPhi{u}.
\end{equation}
So $(1-\tA_2)^{-1}$ maps sequences that are bounded in $\HsPhi$ into sequences that are bounded in $\LdPhi$. It remains then to prove
that the injection of $\LdPhi$ into $\HsPhi$ is compact. This is indeed true since for every $v\in \HsPhi$, one has, by
Cauchy-Schwartz inequality \[|\mV_0(\tta) - \mV_0(\tta')| \leq C\NdPhi{v_0} \sqrt{|\tta -\tta'|}\leq C\NdPhi{v} \sqrt{|\tta
-\tta'|}.\] That means that, by Ascoli-Arzela Theorem that the sets $\ens{v\in \HsPhi}{\NdPhi{v}\leq cst}$ are relatively compact
in $\mC(\Sun)$ and also in $\LdPhi$. That completes the proof.

The fact that $\tA_2$ generates an analytic semigroup $\tT_2(z)$ on the same sector $\Delta_\alpha$ as well as estimation \eqref{eq:estim
resolvent A2 limfluct} can be derived in the same way as in \S~\ref{subsubsec:diagA1}. That concludes the proof of Proposition
\ref{prop:A2selfadj}.
\end{proof}

\subsection{Control on the perturbation $B$}
\label{subsec:control perturb B}
In order to derive spectral properties for the operator $L=A+B$, we need to have a precise estimation about the smallness of the perturbation
$B$ \wrt operator $A$ studied in the previous paragraph \S~\ref{subsec:spectral properties A limfluct}. 

\begin{rem}\rm
For simplicity, we work now with the norm $\NH{\cdot}$ (recall Remark~\ref{rem:weighted spaces with disorder limfluct}); as already
mentioned this norm is equivalent to the norm $\NHw{\cdot}$ used in \S~\ref{subsec:spectral properties A limfluct}. Recall also the definition
of the space $(\Ld, \Nd{\cdot})$ defined in Remark~\ref{rem:weighted spaces no disorder limfluct} and of $(\Ldmu, \Ndmu{\cdot})$
defined in Remark~\ref{rem:weighted spaces with disorder limfluct}.

Secondly, since the whole operator $L$ is no longer symmetric in $\Hsmu$, its spectrum need not be real. Thus, we will assume for the
rest of this document that we work with the complexified versions of the scalar products defined previously in this paper. The results
concerning the operator $A$ are obviously still valid.
\end{rem}
The smallness of the perturbation $B$ with respect to $A$ can be quantified in terms of the difference $\Ninf{q(\cdot, \om)-q_0(\cdot)}$, $
\om\in\Supp(\mu)$.
For the ease of exposition, we do not attempt to derive precise estimations of this difference $\Ninf{q(\cdot, \om)-q_0(\cdot)}$ (Lemma~\ref{lem:estimalphaR}) and of coefficients $a(\om_0)$ and $b(\om_0)$ (Lemma~\ref{prop:Bperturb}), in terms of the coupling strength $K$. $c$ will be a positive constant (depending on $K$) which may change from a line to another.
\begin{lem}
\label{lem:estimalphaR}
 For $\om>0$ and $K>1$, let us define
\begin{equation}
 \label{eq:Norme inf epsilon}
 \Ninf{q-q_0}:= \sup_{\tta\in\Sun,\, |u|\leq \om} |q(\tta, u)-q_0(\tta)|.
\end{equation}
Then $\Ninf{q-q_0}= O(\om)$, as $\om\to0$.
\end{lem}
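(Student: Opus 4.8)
The plan is to separate the problem into a regularity statement for the explicit stationary profiles and a continuity statement for the order parameter. Write $\tilde q(\tta,u,x):=S(\tta,u,x)/Z(u,x)$, so that by \eqref{eq:stationary solution introlimfluct} one has $q(\tta,u)=\tilde q(\tta,u,2Kr(\om))$ for every $u$ with $|u|\le\om$, where $r(\om)$ is the synchronized root of the fixed-point relation \eqref{eq:fixed point mu intro limfluct}. Since $G(v,0,x)=x\cos v$, one computes $S(\tta,0,x)=e^{x\cos\tta}\int_{\Sun}e^{-x\cos v}\dd v$, hence $\tilde q(\tta,0,x)=e^{x\cos\tta}/\int_{\Sun}e^{x\cos v}\dd v$; moreover \eqref{eq:fixed point mu intro limfluct} at $\om=0$ reduces to \eqref{eq:Psizero limfluct}. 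Thus $\tilde q(\tta,0,2Kr_0)=q_0(\tta)$ and $r(0)=r_0$, and it suffices to establish (i) that $\tilde q$ is Lipschitz in $(u,x)$ uniformly in $\tta$ near $(0,2Kr_0)$, and (ii) that $r(\om)-r_0=O(\om)$.

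For (i): the phase $G(v,u,x)=x\cos v+2uv$ is $C^\infty$ in all its arguments, hence so are $S$ and $Z=\int_{\Sun}S(\tta,\cdot,\cdot)\dd\tta$, and $Z$ is bounded below by a positive constant on any compact parameter set (positivity of $S$, cf. Lemma~\ref{lem:estimqLq}); consequently $\tilde q$ is $C^\infty$ jointly on $\Sun\times[-1,1]\times I$ for some compact neighborhood $I$ of $2Kr_0$, in particular Lipschitz there. Hence, once $\om$ is small enough that $|u|\le\om\le 1$ and $2Kr(\om)\in I$,
\[
|q(\tta,u)-q_0(\tta)|=\big|\tilde q(\tta,u,2Kr(\om))-\tilde q(\tta,0,2Kr_0)\big|\leq C\big(|u|+|r(\om)-r_0|\big)\leq C\big(\om+|r(\om)-r_0|\big),
\]
uniformly in $\tta\in\Sun$ and $|u|\le\om$. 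For (ii): note that $\Psi_\mu(x)=\tfrac12\big(\phi(-\om,x)+\phi(\om,x)\big)=:\psi(\om,x)$, where $\phi(u,x):=\int_{\Sun}\cos(\tta)\tilde q(\tta,u,x)\dd\tta$ is $C^\infty$, so $\psi$ is $C^\infty$ with $\psi(0,x)=\Psi_0(x)$. I then apply the implicit function theorem to $F(r,\om):=r-\psi(\om,2Kr)$ at $(r_0,0)$: one has $F(r_0,0)=0$ and $\partial_rF(r_0,0)=1-2K\Psi_0'(2Kr_0)\neq 0$, because $\Psi_0$ is strictly concave (\cite[Lem.~4]{Pearce}) and $\tfrac{d}{dr}\Psi_0(2Kr)|_{r=0}=2K\Psi_0'(0)=K>1$, which forces $\tfrac{d}{dr}\Psi_0(2Kr)|_{r=r_0}<1$ at the synchronized fixed point $r_0>0$. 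This yields a $C^1$ branch $\om\mapsto r(\om)$ through $r_0$ — the branch on which the stationary solution $q$ of \eqref{eq:stationary solution introlimfluct} is built, which persists for $\om$ small — so that $|r(\om)-r_0|\le C\om$. Inserting this into the display above gives $\sup_{\tta\in\Sun,\,|u|\le\om}|q(\tta,u)-q_0(\tta)|=O(\om)$, which is the assertion.

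The only point requiring a little care is the transversality $1-2K\Psi_0'(2Kr_0)\neq 0$ underlying the implicit function theorem — equivalently, the non-degeneracy of the synchronized fixed point — but this is immediate from the strict concavity of $\Psi_0$ already used in \S~\ref{subsec:non disordered case intro limfluct}; everything else is elementary bookkeeping with the explicit formulas. If one wishes to bypass the implicit function theorem for the order parameter, one may instead expand $S(\tta,u,x)$ and $Z(u,x)$ directly in powers of $u$ (using $1-e^{4\pi u}=O(u)$, $e^{2u\tta}=1+O(u)$, $\int_0^\tta e^{-x\cos v-2uv}\dd v=\int_0^\tta e^{-x\cos v}\dd v+O(u)$) to obtain $\tilde q(\tta,u,x)=\tilde q(\tta,0,x)+O(u)$ uniformly on compacta, but one still needs the same $O(\om)$ control on $r(\om)-r_0$ via the fixed-point equation.
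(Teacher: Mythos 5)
Your proof is correct and follows the same basic strategy as the paper, which dispatches the lemma in one line by asserting that $\partial_\om q(\tta,\om)$ is uniformly bounded as $\om\to0$, on the grounds that the stationary profile \eqref{eq:stationary solution introlimfluct} is an explicit, smooth expression. What you do more carefully — and correctly — is to separate the two ways $\om$ enters: directly through the frequency argument $u\in[-\om,\om]$ of $\tilde q(\tta,u,x)=S(\tta,u,x)/Z(u,x)$, and indirectly through the order parameter $x=2Kr(\om)$ solving the fixed-point relation \eqref{eq:fixed point mu intro limfluct}. Your step (i) (joint smoothness of $\tilde q$ on compacta, via positivity and regularity of $S$ and $Z$) is precisely the paper's boundedness-of-the-derivative claim; your step (ii), namely $r(\om)-r_0=O(\om)$ via the implicit function theorem at $(r_0,0)$, makes explicit a point the paper leaves tacit. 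The transversality $1-2K\Psi_0'(2Kr_0)>0$ at the synchronized root $r_0>0$, which you deduce from the strict concavity of $\Psi_0$ together with $\tfrac{d}{dr}\Psi_0(2Kr)\big|_{r=0}=K>1$, is exactly the nondegeneracy condition needed for the IFT and is correct. So the two proofs take the same route (regularity of the explicit stationary profile), with yours being a fully justified expansion of the paper's terse assertion; identifying the $r(\om)$-dependence as the subtle part is the right instinct.
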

\begin{proof}
This is clear since one can bound $\partial_\om q(\tta, \om)$ uniformly in $(\tta, \om)$, as $\om\to0$ (by a constant depending on $K$).
\end{proof}
\begin{prop}
 \label{prop:Bperturb}
The operator $B$ is $A$-bounded in the sense that there exist positive constants $a(\om_0)=a(\om_0, K)$ and $b(\om_0)=b(\om_0, K)$ such that
\begin{equation}
 \label{eq:BAbounded}
\forall h\in\mD,\quad \NH{Bh}\leq a(\om_0) \NH{h} + b(\om_0)\NH{Ah},
\end{equation}
and moreover, for fixed $K>1$,
\begin{equation}
 \label{eq:aombom}
a(\om_0)=O(\om_0),\ \text{and}\ b(\om_0)=O(\om_0),\quad \text{as}\ \om_0\to 0, 
\end{equation}
\end{prop}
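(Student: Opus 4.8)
The plan is to estimate $\NH{Bh}$ term by term, bounding each piece of $Bh$ by $\NH{h}$ and $\NH{Ah}$ with constants that are controlled by $\Ninf{q-q_0}$ and $\om_0$, and then invoke Lemma~\ref{lem:estimalphaR}, which gives $\Ninf{q-q_0}=O(\om_0)$. Recall from \eqref{eq:operator B L limfluct} that
\[
Bh = -\partial_\tta\Big(h\,\{\langle J\ast(q-q_0)\rangle_\mu + \om\} + (q-q_0)\langle J\ast h\rangle_\mu\Big),
\]
and that the $\NH{\cdot}$-norm is an $\bH^{-1}$-type norm in the $\tta$ variable (averaged in $\om$): for a function $\partial_\tta g$ of zero mean on $\Sun$, $\NsPhi{\partial_\tta g}$ is comparable to the $\bL^2$-norm of $g$ minus its suitable projection. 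Thus, morally, $\NH{Bh}$ is controlled by the $\Ldmu$-norm of the bracketed expression. The first step is therefore to establish the interpolation-type bound $\NH{\partial_\tta g}\leq c\,\Nd{g}$ (in the appropriate weighted spaces, using that the weights $q_0$, $w$ are bounded above and below), so that it suffices to bound $\big\|\,h\{\langle J\ast(q-q_0)\rangle_\mu+\om\} + (q-q_0)\langle J\ast h\rangle_\mu\,\big\|_{\mu,2}$.

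Next I would bound the two summands. For the first summand, $\langle J\ast(q-q_0)\rangle_\mu(\tta) = -K\int\!\!\int \sin(\vphi)(q-q_0)(\tta-\vphi,\om)\,\dd\vphi\,\mu(\dd\om)$ is bounded in sup-norm by $c\,\Ninf{q-q_0}$, while $|\om|\leq\om_0$ on $\Supp(\mu)$; hence $\|h\{\langle J\ast(q-q_0)\rangle_\mu+\om\}\|_{\mu,2}\leq c(\Ninf{q-q_0}+\om_0)\,\Ndmu{h}$. For the second summand, $\langle J\ast h\rangle_\mu(\tta)$ is a (smoothing) convolution of $h$ against $-K\sin$, so in sup-norm it is $\leq c\,\NH{h}$ — indeed convolving against a smooth kernel gains a derivative, so it maps $\bH^{-1}$ continuously to $\bL^\infty$; multiplying by $(q-q_0)$ then contributes a factor $\Ninf{q-q_0}$, giving $\|(q-q_0)\langle J\ast h\rangle_\mu\|_{\mu,2}\leq c\,\Ninf{q-q_0}\,\NH{h}$. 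The one genuinely delicate point is that $\Ndmu{h}$ is \emph{not} controlled by $\NH{h}$ alone for general $h\in\mD$ (an $\bH^{-1}$ bound does not control the $\bL^2$ norm), so one cannot absorb the first summand purely into an $a(\om_0)\NH{h}$ term; this is exactly why the statement is framed as $A$-boundedness with a genuine $b(\om_0)\NH{Ah}$ contribution.

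To handle that, I would use a standard elliptic/interpolation estimate for the self-adjoint operator $A$: since $A$ is (up to the conjugation $M$) a second-order operator of Fokker--Planck type which is essentially self-adjoint with compact resolvent (Proposition~\ref{prop:Aselfadj}), one has a bound of the form $\Ndmu{h} \leq c\big(\NH{h} + \NH{Ah}\big)$ — or, sharper, an interpolation inequality $\Ndmu{h}\leq \delta\,\NH{Ah} + C_\delta\,\NH{h}$ for every $\delta>0$, obtained by writing $h$ in the spectral decomposition of $A$ and splitting high and low frequencies (the $\bL^2$-norm sits "halfway" between the $\bH^{-1}$-norm and the graph norm of $A$). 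Feeding this into the two summand estimates and collecting terms yields \eqref{eq:BAbounded} with
\[
a(\om_0) = c\big(\Ninf{q-q_0}+\om_0\big),\qquad b(\om_0) = c\big(\Ninf{q-q_0}+\om_0\big),
\]
and then Lemma~\ref{lem:estimalphaR} gives $\Ninf{q-q_0}=O(\om_0)$, hence \eqref{eq:aombom}. The main obstacle is precisely the interpolation step $\Ldmu \hookrightarrow$ (graph norm of $A$): establishing it cleanly requires using the spectral theorem for $A$ together with the fact that the weights are two-sided bounded so the various norms ($\NH{\cdot}$, $\NHw{\cdot}$, and the $\Ldmu$-norm) are all mutually comparable to their unweighted counterparts; once that is in place, everything else is routine kernel estimates and the triangle inequality.
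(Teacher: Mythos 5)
Your overall two-step structure matches the paper's: first reduce $\NH{Bh}$ to the $\Ldmu$-norm of the quantity inside the derivative (your two-summand estimate is essentially the paper's bound $\NH{Bh}\leq c(\Ninf{q-q_0}+\om_0)\Ndmu{h}$, with the minor refinement that you bound $\langle J\ast h\rangle_\mu$ directly by $\NH{h}$ rather than by $\Nd{\langle h\rangle_\mu}$), and then control $\Ndmu{h}$ by $\NH{Ah}$ and $\NH{h}$. You also correctly identify this second step as the crux — good.

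The gap is in how you propose to get the interpolation inequality $\Ndmu{h}\leq\delta\NH{Ah}+C_\delta\NH{h}$. Invoking "the spectral theorem for $A$ and splitting high and low frequencies" does not directly work: the eigenfunctions $\psi_n$ of $A$ are orthogonal in $\Hsmu$, not in $\Ldmu$, so writing $h=\sum c_n\psi_n$ diagonalizes $\NH{h}$ and $\NH{Ah}$ but gives no handle on $\Ndmu{h}=\Ndmu{\sum c_n\psi_n}$ without an additional input. What is really being claimed is an elliptic identification of the form $\Ldmu\simeq\mD\big((1-A)^{1/2}\big)$ (equivalently, a two-sided coercivity estimate $\Ndmu{h}^2\asymp -\croH{h}{Ah}+\NH{h}^2$), and that is precisely the regularity statement that needs proving rather than a consequence of spectral theory. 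The paper avoids this abstraction and instead proves the interpolation by hand: it uses the elementary integration-by-parts inequality $\Nd{\partial_\tta f}^2\leq\frac n2\Nd{f}^2+\frac1{2n}\Nd{\partial_\tta^2 f}^2$ applied to the $\bH^{-1}$-primitive $\mH_0$ of $h$, which converts $\Nd{h_0}=\Nd{\partial_\tta\mH_0}$ into a combination of $\Nd{\mH_0}$ (an $\bH^{-1}$-quantity) and $\Nd{\partial_\tta h}$; then it reads off from the explicit formula for $\mA h$ in \eqref{eq:defmA2} that $\partial_\tta h = 2\mA h+\text{(lower order)}$, giving $\Ndmu{\partial_\tta h}^2\leq 12\Ndmu{\mA h}^2+c\Ndmu{h}^2$, and finally absorbs the $\Ndmu{h}^2$ term by taking $n$ large. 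If you replace the sentence about the spectral theorem with this concrete computation (or, alternatively, with the Dirichlet-form coercivity estimate and a Cauchy–Schwarz step), your argument is complete.
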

\begin{proof}[Proof of Proposition~\ref{prop:Bperturb}]
Recall that $\langle h \rangle_\mu(\cdot)= \intR h(\cdot, \om)\mu(\dd\om)$ is the averaging of $h(\cdot, \om)$. The proof consists in two steps: we first prove that there exists some
constant $\alpha_{K, \om_0}$ such that for all $h\in \mD$,
\begin{align}
 \NH{Bh} &\leq \alpha_{K, \om_0}\Ndmu{h},\label{eq:estimB1}
\end{align}
Indeed, for given $h\in \mD$, for all $ \om\in\Supp(\mu)$, we have $\Vert Bh(\cdot, \om)\Vert_{-1}= \Nd{\mB h(\cdot, \om)}$,
where $\mB h(\cdot, \om)$ is the appropriate primitive of $Bh(\cdot, \om)$ in $\bH^{-1}$, (recall Remark~\ref{rem:weighted spaces no disorder limfluct}):
\begin{align*}
\mB h&:={} -h \left( \langle J\ast (q -q_0)\rangle_\mu + \om\right) - (q-q_0)\cdot \langle J\ast h\rangle_\mu\\ 
&+{}\intS h \cdot\left( \langle J\ast (q -q_0)\rangle_\mu +\om\right) +\intS (q-q_0)\cdot \langle J\ast h\rangle_\mu.
\end{align*}
Using the boundedness of $q_0$ and the bounds $|\left( J\ast \eps \right)|\leq 4 K \Ninf{\eps}$ and $|\langle J\ast h\rangle_\mu|\leq
\frac{K}{\sqrt{2}}\Nd{\langle h\rangle_\mu}$, it is easy to deduce that, for some constant $c>0$:
\begin{equation}
\label{eq:estimmB1}
|\mB h| \leq c(\Ninf{q-q_0} +\om_0) (|h|+ \Nd{\langle h \rangle_\mu}).
\end{equation}
Consequently,
\begin{align}
 \NH{Bh} &= \left(\langle\Nd{\mB h}^2\rangle_\mu\right)^\frac12\leq c(\Ninf{q-q_0} +\om_0) \Ndmu{h},
\end{align}
so that \eqref{eq:estimB1} is satisfied for some coefficient $\alpha_{K, \om_0}$ such that (Lemma~\ref{lem:estimalphaR}) $\alpha_{K,
\om_0}=O_{\om_{0}\to0}(\om_0)$.

\medskip

The second step of the proof is to control the $\bL^2$-norm $\Ndmu{h}$ of $h$ with the $\Hsmu$-norms of
$A h$ and $h$: namely we prove that there exist constants $\gamma_K$ and $\delta_K$ such that
\begin{align}
 \Ndmu{h} &\leq \gamma_{K}\NH{Ah} + \delta_{K}\NH{h}.\label{eq:estim u A1u d to s}
\end{align}
The proof is based on a usual interpolation argument: for all integer $n>1$, for any $f\in\mC^2(\Sun)$, one has
\begin{align}
\label{eq:identuu'u''}
 \Nd{\partial_\tta f}^2\leq \sqrt{n}\Nd{f} \frac{\Nd{\partial_\tta^2f}}{\sqrt{n}}\leq \frac n2 \Nd{f}^2 + \frac{\Nd{\partial_\tta^2f}^2}{2n}.
\end{align}
Let us use this interpolation relation \eqref{eq:identuu'u''} to derive \eqref{eq:estim u A1u d to s}: for all $h\in\mD$, $ \om\in\Supp(\mu)$,
one has
\begin{equation}
 \Nd{h(\cdot, \om)}^2 = \left( \intS h(\cdot, \om) \right)^2 + \Nd{h_{0}(\cdot, \om)}^2
\end{equation}
Applying relation
\eqref{eq:identuu'u''} with $f(\cdot)=\mH_{0}(\cdot, \om)$ we obtain
\begin{align}
 \Nd{h(\cdot, \om)}^2&\leq \left|\intS h(\cdot, \om)\right|^2 + \frac n2 \Nd{\mH_{0}(\cdot, \om)}^2 + \frac{\Nd{\partial_\tta h(\cdot,
\om)}^2}{2n},
\end{align}
where we used the fact that $\partial_\tta h_{0}(\cdot, \om)=\partial_\tta h(\cdot, \om)$. Integrating \wrt $\mu$,
\begin{equation}
 \label{eq:estvv'}
\Ndmu{h}^2 \leq \intR\left|\intS h\right|^2\dd\mu + \frac n2\Ndmu{\mH_{0}}^2 + \frac{\Ndmu{\partial_\tta h}^2}{2n}.
\end{equation}
As previously for the operator $B$, a simple calculation shows that for all $ \om\in\Supp(\mu)$, we have
$\Vert Ah(\cdot, \om)\Vert_{-1}= \Nd{\mA h(\cdot, \om)}$, where $\mA h$ is the appropriate primitive of $Ah$ in $\bH^{-1}$
(recall \eqref{eq:operator A L limfluct}):
\begin{equation}
\mA h = {} \half \partial_\tta h - h(J\ast q_0) - q_0 \langle J\ast h\rangle_\mu +\intS \left( h (J\ast q_0) + q_0 \langle J\ast
h\rangle_\mu\right),
\label{eq:defmA2}
\end{equation}
so that, for some constant $c>0$
\begin{equation}
\label{eq:estim hprime wrt Ah two limfluct}
\Ndmu{\partial_\tta h}^2 \leq 12 \Ndmu{\mA h}^2 + c\Ndmu{h}^2.
\end{equation}
Injecting this inequality in \eqref{eq:estvv'}, one obtains
\begin{equation}
\label{eq:hsig vs Ahsig bbE limfluct}
\Ndmu{h}^2 \leq \intR\left|\intS h\right|^2\dd\mu + \frac n2 \Ndmu{\mH_{0}}^2 + \frac{1}{2n}\left(12\Ndmu{\mA h}^2 +
c\Ndmu{h}^2 \right).
\end{equation}
Choosing $n>1$ sufficiently large so that the coefficient in front of $\Ndmu{h}^2$ in the right-hand side of \eqref{eq:hsig vs Ahsig
bbE limfluct} is lower than $\frac12$ leads to (for some constant $c>0$):
\begin{align*}
\Ndmu{h}^2 &\leq 2\intR\left|\intS h\right|^2\dd\mu + c\Ndmu{\mH_{0}}^2 + c\Ndmu{\mA h}^2\leq c\NH{h}^2 + c \NH{Ah}^2,
\end{align*}
which shows \eqref{eq:estim u A1u d to s}. Putting together estimates \eqref{eq:estimB1} and \eqref{eq:estim u A1u d to s}, we find the
$A$-boundedness of $B$ \eqref{eq:BAbounded} with coefficients $a(\om_0)$ and $b(\om_0)$ which satisfy \eqref{eq:aombom}, thanks to Lemma
\ref{lem:estimalphaR}.\qedhere
\end{proof}

\begin{prop}
\label{prop:B Acompact}
The operator $B$ is $A$-compact, in the sense that for any sequence $(h_p)_{p\geq 0}\in\mD^{\bN}$ such that $\NH{h_p}$ and $\NH{Ah_p}$
are bounded, there exists a convergent subsequence for $(Bh_p)_{p\geq1}$.
\end{prop}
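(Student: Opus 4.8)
The plan is to combine the two estimates already contained in the proof of Proposition~\ref{prop:Bperturb}: on the one hand \eqref{eq:estimB1}, which shows that $B$ is controlled by the \emph{$\bL^2$-norm} $\Ndmu{h}$ (and not just by the $\bH^{-1}$-norm of $h$); on the other hand the interpolation estimate \eqref{eq:estim hprime wrt Ah two limfluct} together with \eqref{eq:estim u A1u d to s}, which control $\Ndmu{\partial_\tta h}$ and $\Ndmu{h}$ in terms of $\NH{Ah}$ and $\NH{h}$. Thus $A$-boundedness already gives an $\bH^1$-bound in the $\tta$-variable for sequences with $\NH{h_p}$ and $\NH{Ah_p}$ bounded, and the gain of compactness is then just the classical Rellich embedding $\bH^1(\Sun)\hookrightarrow\bL^2(\Sun)$.

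Concretely, I would take a sequence $(h_p)_{p\geq0}\in\mD^{\bN}$ with $M:=\sup_p\left(\NH{h_p}+\NH{Ah_p}\right)<\infty$. Applying \eqref{eq:estim u A1u d to s} gives $\sup_p\Ndmu{h_p}\leq C(M)$; then, since $\mA h_p$ is by definition the $\bH^{-1}$-primitive of $Ah_p$ (so $\Ndmu{\mA h_p}\leq \NH{Ah_p}$), estimate \eqref{eq:estim hprime wrt Ah two limfluct} yields $\sup_p\Ndmu{\partial_\tta h_p}\leq C(M)$. Because $\mu=\tfrac12(\delta_{-\om_0}+\delta_{\om_0})$, this says precisely that the two families $\big(h_p(\cdot,+\om_0)\big)_p$ and $\big(h_p(\cdot,-\om_0)\big)_p$ are bounded in $\bH^1(\Sun)$.

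Next I would invoke the compact embedding $\bH^1(\Sun)\hookrightarrow\bL^2(\Sun)$ — either Rellich's theorem directly, or $\bH^1(\Sun)\hookrightarrow\mC^{1/2}(\Sun)$ followed by Ascoli--Arzel\`a, exactly as in the proof of Proposition~\ref{prop:A2selfadj} — to extract a subsequence $(h_{p_k})_k$ converging in $\bL^2(\Sun)$ for both values $\om=\pm\om_0$, i.e. converging in $(\Ldmu,\Ndmu{\cdot})$. Finally, applying \eqref{eq:estimB1} to $h_{p_k}-h_{p_l}\in\mD$,
\[
\NH{Bh_{p_k}-Bh_{p_l}}=\NH{B(h_{p_k}-h_{p_l})}\leq \alpha_{K,\om_0}\,\Ndmu{h_{p_k}-h_{p_l}}\xrightarrow[k,l\to\infty]{}0,
\]
so $(Bh_{p_k})_k$ is Cauchy in the Banach space $\Hsmu$ and hence converges, which is the claim.

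I do not expect a real obstacle here: all the analytic work — in particular the $\bL^2_\mu$-control of $\partial_\tta h$ — was already done for Proposition~\ref{prop:Bperturb}, and the only genuinely new ingredient is the classical Rellich compactness on the circle. The one point that deserves care is the bookkeeping of the various weighted norms, together with the structural observation that $B$ does not feel the $\bH^{-1}$-regularity of $h$ but only its $\bL^2$-regularity, which is exactly what makes the extra half-derivative, and hence the compactness, available.
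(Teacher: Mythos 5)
Your proposal is correct and follows essentially the same route as the paper: bound $\Ndmu{h_p}$ and $\Ndmu{\partial_\tta h_p}$ using the estimates already established in the proof of Proposition~\ref{prop:Bperturb}, extract an $\Ldmu$-convergent subsequence by compactness on the circle, and pass this convergence through $B$ via \eqref{eq:estimB1}. The only cosmetic differences are that you invoke \eqref{eq:estim u A1u d to s} directly rather than re-running the interpolation argument, and you invoke Rellich $\bH^1(\Sun)\hookrightarrow\bL^2(\Sun)$ on $h_p(\cdot,\pm\om_0)$ outright rather than splitting $h_p$ into its mean value and its zero-mean part and applying Ascoli--Arzel\`a as the paper does; you also make explicit, via \eqref{eq:estimB1} applied to $h_{p_k}-h_{p_l}$, the step the paper only asserts, namely that $\Ldmu$-convergence of a subsequence suffices.
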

\begin{proof}[Proof of Proposition~\ref{prop:B Acompact}]
Let $(h_p)_{p\geq 0}$ a sequence in $\mD$ such that $\NH{h_p}$ and $\NH{Ah_p}$ are bounded by a constant $c$. A closer look at the operator $B$
defined in \eqref{eq:operator B L limfluct} and the definition of the norm $\NH{\cdot}$ in \eqref{eq:norm Hmoins 1 weight k mu
limfluct}
shows that it suffices to prove that there exists a subsequence $(h_{p_k})_{k\geq0}$ that is convergent in $\Ldmu$. In
particular, for all
$p\geq 0$, $\NH{Ah_p}\leq c$. Using this boundedness and \eqref{eq:estim hprime wrt Ah two limfluct}, we have
$\Ndmu{\partial_\tta h_{p}}\leq c + c\Ndmu{h_p}$, so that
\begin{align}
\Ndmu{\partial_\tta h_{p}}&\leq c +c\left(\intR\left|\intS h\right|^2\dd\mu + \Ndmu{h_{0,p}}^2\right),\\
&\leq c +c\left(\intR\left|\intS h\right|^2\dd\mu + \frac n2\Ndmu{\mH_{0, p}}^2 + \frac{\Ndmu{\partial_\tta h_{p}}^2}{2n}\right),
\end{align}
where we used again \eqref{eq:identuu'u''} for $f=\mH_{0, p}(\cdot, \om)$. Choosing a sufficiently large $n>1$ leads to
$\Ndmu{\partial_\tta h_{0, p}}=\Nd{\partial_\tta h_{p}}\leq c+c\NH{h_p}\leq c$ for a constant $c$ independent of $p\geq0$. An easy application
of Cauchy-Schwartz
inequality leads to $| h_{0, p}(\tta, \om)- h_{0, p}(\tta', \om)| \leq \Ndmu{\partial_\tta h_{0, p}} \sqrt{|\tta-\tta'|}$, for all
$\om\in\{\pm\om_0\}$.
Since
the functions $(h_{0, p})_{p\geq 0}$ are such that $\intS h_{0, p}=0$ for all $p\geq 0$, Ascoli-Arzela Theorem and the previous bound show
the existence of a convergent subsequence $(h_{0, p_k})$ (for each $ \om\in\Supp(\mu)$) in the space of continuous functions on $\Sun$. In
particular, this subsequence is convergent in $\Ldmu$ and is renamed $(h_{0, p})_{p\geq0}$, with a slight abuse of notations.

The fact that $\intR|\intS h_{p}|\dd\mu\leq c$ shows that one can extract a further subsequence of $(h_p)_{p\geq0}$ which is also convergent in
$\Ldmu$.
This concludes the proof.
\end{proof}

\subsection{Spectral properties of $L=A+B$}
\label{subsec:spectral properties of operator L limfluct d}
We are now in position to derive by perturbation results on $A$ similar spectral properties on $L=A+B$ using theory of perturbation of operators (\cite{Kato1995}) and analytic semi-groups (\cite{Pazy1983}).
\subsubsection{The spectrum of $L$ is pure point}
\begin{prop}
 \label{prop:Ltildecompactresolvent}
For all $K$, for all $\om_0>0$,
\begin{enumerate}
  \item  the operator $(L, \mD)$ is closable. In that case, its closure has the same domain as the closure of $A$,
 \item the closure of $L$ has compact resolvent. In particular, its spectrum is pure point.
\end{enumerate}
\end{prop}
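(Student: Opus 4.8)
The plan is to obtain both statements from the spectral description of $A$ in Proposition~\ref{prop:Aselfadj} and the relative compactness of $B$ in Proposition~\ref{prop:B Acompact}, via the standard perturbation theory of closed operators (\cite{Kato1995}). Since neither closability nor compactness of the resolvent is affected by passing to an equivalent norm, I would work throughout in the Hilbert space $(\Hw, \NHw{\cdot})$ of \eqref{eq:norm for A limfluct}, in which $\bar A$ (the closure of $(A,\mD)$) is self-adjoint, has compact resolvent, satisfies $\sigma(\bar A)\subseteq(-\infty,0]$, and admits $\mD$ as a core.

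For (i), the first point is that $B$ is \emph{infinitesimally} $A$-bounded: for every $\eps>0$ there is a constant $C_\eps$ with $\NHw{Bh}\leq\eps\NHw{Ah}+C_\eps\NHw{h}$ for all $h\in\mD$. This is the classical upgrade of the $A$-compactness of $B$ (Proposition~\ref{prop:B Acompact}) that is available because $\bar A$ is self-adjoint; it is strictly stronger than the estimate of Proposition~\ref{prop:Bperturb}, whose relative bound $O(\om_0)$ is only $<1$ for small $\om_0$, and it is precisely this point that makes the present proposition valid for \emph{all} $\om_0>0$. Since $\mD$ is a core for $\bar A$ and $B$ is $A$-bounded, $B$ extends by graph-continuity to a $\bar A$-bounded operator on $D(\bar A)$, still denoted $B$, which remains $\bar A$-compact (approximate elements of $D(\bar A)$ in graph norm by elements of $\mD$ and run a diagonal argument on the extraction of Proposition~\ref{prop:B Acompact}). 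As $B$ has $A$-relative bound $0<1$, the stability theorem for relatively bounded perturbations (\cite[Th.~IV.1.1]{Kato1995}) shows that $(L,\mD)=(A+B,\mD)$ is closable, with closure $\bar L=\bar A+B$ and $D(\bar L)=D(\bar A)$; this is (i).

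For (ii), I would show that $\NHw{B\,R(\lambda,\bar A)}\to 0$ as $\lambda\to+\infty$ along the real axis (this also re-proves the infinitesimal bound used above). Fix $\lambda_0>0$; then $K_0:=B\,R(\lambda_0,\bar A)$ is compact, and for real $\lambda>\lambda_0$,
\begin{equation*}
B\,R(\lambda,\bar A)=K_0\,S_\lambda,\qquad S_\lambda:=(\lambda_0-\bar A)R(\lambda,\bar A)=I-(\lambda-\lambda_0)R(\lambda,\bar A),
\end{equation*}
where, by the spectral theorem and $\sigma(\bar A)\subseteq(-\infty,0]$, the operators $S_\lambda$ are self-adjoint with $\NHw{S_\lambda}\leq 1$ and $S_\lambda\to 0$ strongly as $\lambda\to+\infty$; compactness of $K_0$ (equivalently of $K_0^*$, together with self-adjointness of $S_\lambda$) then forces $\NHw{K_0 S_\lambda}\to 0$. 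Hence for $\lambda$ large enough $\NHw{B\,R(\lambda,\bar A)}<1$, so $I-B\,R(\lambda,\bar A)$ is boundedly invertible, $\lambda\in\rho(\bar L)$, and
\begin{equation*}
R(\lambda,\bar L)=R(\lambda,\bar A)\bigl(I-B\,R(\lambda,\bar A)\bigr)^{-1}.
\end{equation*}
Since $\bar A$ has compact resolvent, $R(\lambda,\bar A)$ is compact, hence so is $R(\lambda,\bar L)$; by the first resolvent identity this persists at every point of $\rho(\bar L)$, so $\bar L$ has compact resolvent, and the Riesz--Schauder theory then gives that $\sigma(\bar L)$ consists of isolated eigenvalues of finite algebraic multiplicity. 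The only genuinely delicate ingredient is the passage from ``$A$-compact'' to ``infinitesimally $A$-bounded'' (equivalently, $\NHw{B\,R(\lambda,\bar A)}\to 0$), which is what secures uniformity in $\om_0$; everything else is a routine application of Kato's machinery.
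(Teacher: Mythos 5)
Your proof is correct and follows the same underlying mechanism as the paper's: everything is driven by the $A$-compactness of $B$ from Proposition~\ref{prop:B Acompact}, applied to the self-adjoint, compact-resolvent operator $A$. The paper's proof is a two-line citation of \cite[Th.~IV.1.11]{Kato1995} (closability and equality of domains under an $A$-compact perturbation) and \cite[Lemma~3.6]{Markus1988} (compact resolvent is preserved under such a perturbation); your argument---showing $\Vert B\,R(\lambda,\bar A)\Vert\to 0$ via $B\,R(\lambda,\bar A)=K_0 S_\lambda$ with $K_0$ compact and $S_\lambda$ self-adjoint contractions tending strongly to zero, and then invoking Kato--Rellich and the factorization $R(\lambda,\bar L)=R(\lambda,\bar A)\bigl(I-B\,R(\lambda,\bar A)\bigr)^{-1}$---is precisely a self-contained derivation of the content of those two cited results, correctly observing that the uniformity in $\om_0$ comes from relative \emph{compactness} rather than from the $O(\om_0)$ bounds of Proposition~\ref{prop:Bperturb}.
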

\begin{rem}\rm
 Note that Proposition~\ref{prop:Ltildecompactresolvent} is valid without any assumption on the smallness of $\om_0$, since it relies on the
relative compactness of $B$ with respect to $A$ (Prop.~\ref{prop:B Acompact}).
\end{rem}

\begin{proof}[Proof of Proposition~\ref{prop:Ltildecompactresolvent}]
It is a simple consequence of the relative compactness of $B$ \wrt the self-adjoint operator $A$. The first assertion of Proposition
\ref{prop:Ltildecompactresolvent} is a consequence of~\cite[Th. 1.11 p.194]{Kato1995} and the second assertion can be found in~\cite[Lemma
3.6, p.17]{Markus1988} for example.
\end{proof}

\subsubsection{$L$ generates an analytic operator}

We prove that the perturbed operator $L$ generates an analytic semigroup of operators on a
appropriate sector. An immediate corollary is the position of the spectrum in a cone whose vertex is zero. We know (Proposition~\ref{prop:Aselfadj}) that for all $0<\alpha<\frac{\pi}{2}$, $A$ generates a semigroup of operators on $\Delta_\alpha=\ens{z\in\bC}{|\arg(z)|<\alpha}$.
\begin{prop}
 \label{prop:semigroupAB}
For all $K>1$, $\eps>0$ and $0<\alpha<\frac{\pi}{2}$, there exists $\om_{1}>0$ (depending on $\alpha$, $K$ and $\eps$) such that
for all $0<\om_0<\om_{1}$, the spectrum of $L$ lies within the sector
$\Theta_{\eps,\alpha}:=\ens{\lamb\in\bC}{\frac{\pi}{2} +\alpha \leq \arg(\lamb)\leq \frac{3\pi}{2}-\alpha}
\cup\ens{\lamb\in\bC}{ |\lamb|\leq \eps}.$
For such $\om_0$, $L$ generates an analytic semigroup of operators on $\Delta_{\alpha'}$, for some $\alpha'\in(0,
\frac\pi2)$.
\end{prop}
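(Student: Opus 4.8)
The plan is to treat $L=A+B$ as a small perturbation of the essentially self-adjoint operator $A$ of \S\ref{subsec:spectral properties A limfluct} and to run a Neumann series for the resolvent. The ingredients at our disposal are: by Proposition~\ref{prop:Aselfadj}, $A$ generates an analytic semigroup, $\Sigma_\alpha\subseteq\rho(A)$, and $\NH{R(\lambda,A)}\leq\frac{1}{(1-\sin\alpha)\,|\lambda|}$ for $\lambda\in\Sigma_\alpha$ (the norms $\NHw{\cdot}$ and $\NH{\cdot}$ being equivalent); by Proposition~\ref{prop:Bperturb}, $B$ is $A$-bounded, $\NH{Bh}\leq a(\om_0)\NH{h}+b(\om_0)\NH{Ah}$ for $h\in\mD$, with $a(\om_0)=O(\om_0)$ and $b(\om_0)=O(\om_0)$ as $\om_0\to0$; and by Proposition~\ref{prop:Ltildecompactresolvent}, $\overline{L}$ is closed with $D(\overline{L})=D(\overline{A})$, so that $BR(\lambda,A)$ is a bounded operator on $\Hsmu$ for $\lambda\in\rho(A)$ and the identity $\lambda-L=\bigl(\bI-BR(\lambda,A)\bigr)(\lambda-A)$ is legitimate.

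First I would prove the spectral localization. Writing $AR(\lambda,A)=\lambda R(\lambda,A)-\bI$ and combining the two bounds above, one obtains, for $\lambda\in\Sigma_\alpha$,
\[
\NH{BR(\lambda,A)}\ \leq\ a(\om_0)\,\NH{R(\lambda,A)}+b(\om_0)\bigl(1+|\lambda|\,\NH{R(\lambda,A)}\bigr)\ \leq\ \frac{a(\om_0)}{(1-\sin\alpha)\,|\lambda|}+b(\om_0)\Bigl(1+\tfrac{1}{1-\sin\alpha}\Bigr).
\]
On $\ens{\lambda\in\Sigma_\alpha}{|\lambda|\geq\eps}$ the right-hand side is at most $C(\alpha)\bigl(a(\om_0)/\eps+b(\om_0)\bigr)$, which tends to $0$ as $\om_0\to0$; hence there is $\om_1=\om_1(\alpha,K,\eps)>0$ such that for $0<\om_0<\om_1$ one has $\NH{BR(\lambda,A)}\leq\tfrac12$ on that set. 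Then $\bI-BR(\lambda,A)$ is boundedly invertible there, so $\lambda\in\rho(L)$ with $R(\lambda,L)=R(\lambda,A)\bigl(\bI-BR(\lambda,A)\bigr)^{-1}$ and $\NH{R(\lambda,L)}\leq\frac{2}{(1-\sin\alpha)\,|\lambda|}$. Consequently $\sigma(L)$ lies in the complement of $\ens{\lambda\in\Sigma_\alpha}{|\lambda|>\eps}$, and since $C_\alpha$ is exactly $\bC\setminus\Sigma_\alpha$ and $0\in\ens{\lambda}{|\lambda|\leq\eps}$, this complement is contained in $\Theta_{\eps,\alpha}$.

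It then remains to produce the analytic semigroup. Fix $\alpha'\in(0,\alpha)$. A sector of half-angle $\frac\pi2+\alpha'$ with vertex anywhere on $\bR_+$ is contained in $\Sigma_\alpha$, and by placing its vertex at a large enough real $\xi_0>\eps$ one also keeps it inside $\ens{\lambda}{|\lambda|>\eps}$; thus $\ens{\lambda\in\bC}{\lambda\neq\xi_0,\ |\arg(\lambda-\xi_0)|<\frac\pi2+\alpha'}\subseteq\ens{\lambda\in\Sigma_\alpha}{|\lambda|>\eps}$. On this shifted sector the resolvent bound for $L$ just obtained applies; since $|\lambda|\geq\eps$ there, $|\lambda-\xi_0|\leq|\lambda|+\xi_0\leq(1+\xi_0/\eps)|\lambda|$, so $\NH{R(\lambda,L)}\leq M'/|\lambda-\xi_0|$ for a suitable constant $M'$. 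By the classical characterization of generators of analytic semigroups (\cite{Lunardi1995, Pazy1983}; compare Proposition~\ref{prop:pazysemgps} applied to $L-\xi_0$, whose spectrum lies in $\{\Re z<0\}$ by the previous step, so that $0\in\rho(L-\xi_0)$), $L-\xi_0$, hence $L$, generates an analytic semigroup on a sector $\Delta_{\alpha'}$.

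The only genuinely delicate point is that the Neumann series must converge uniformly down to $|\lambda|=\eps$, not merely for large $|\lambda|$; this is precisely what forces us to control \emph{both} relative bounds $a(\om_0)$ and $b(\om_0)$, and is why Proposition~\ref{prop:Bperturb} was formulated with $a(\om_0)=O(\om_0)$ as well. Mere generation of an analytic semigroup by $L=A+B$ would already follow from $b(\om_0)\to0$ together with an off-the-shelf perturbation theorem for analytic semigroups (\cite{Kato1995, Pazy1983}); it is the sharp, $\lambda$-uniform resolvent bound for $A$ that upgrades this to the explicit sector $\Theta_{\eps,\alpha}$. The remaining planar geometry and the comparison of $|\lambda|^{-1}$ with $|\lambda-\xi_0|^{-1}$ are routine.
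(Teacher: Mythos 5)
Your argument is correct and is essentially the paper's: the Neumann series $R(\lambda,L)=R(\lambda,A)\bigl(\bI-BR(\lambda,A)\bigr)^{-1}$ on $\ens{\lambda\in\Sigma_\alpha}{|\lambda|\geq\eps}$, with $\NH{BR(\lambda,A)}\leq\frac12$ enforced by taking $\om_0$ small using both $a(\om_0),b(\om_0)=O(\om_0)$, yields the resolvent bound and the localization of $\sigma(L)$ in $\Theta_{\eps,\alpha}$ exactly as in the paper. The only variation is in producing the analytic semigroup: the paper subtracts $2\eps$ from $L$ and reruns the Taylor-expansion extension of the resolvent estimate (as in the proof of Proposition~\ref{prop:A1 semigroup}, arriving at the explicit choice $\alpha'=\tfrac12\arctan\left(\tfrac{1-\sin\alpha}{2}\right)$), whereas you shift by a larger $\xi_0>\eps/\cos\alpha'$ so that the translated sector $\ens{\lambda}{|\arg(\lambda-\xi_0)|<\tfrac\pi2+\alpha'}$ lands directly inside the region where the bound for $L$ is already established — a slightly more direct geometry, at the harmless cost of a $(1+\xi_0/\eps)$ blowup in the constant and a less explicit $\alpha'$.
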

\begin{proof}[Proof of Proposition~\ref{prop:semigroupAB}]
Let $0<\alpha<\frac{\pi}{2}$ be fixed. Thanks to \eqref{eq:estim resolvent A limfluct}, there exists a constant $c>0$ (which comes from the
equivalence of the norms $\NH{\cdot}$ and $\NHw{\cdot}$) such that for every
$\lamb\in\Sigma_\alpha:=\ens{\lamb\in\bC}{|\arg(\lamb)|<\frac{\pi}{2}+\alpha}$:\[\NH{R(\lamb, A)}
\leq \frac{c}{(1-\sin(\alpha))|\lamb|}\quad \text{and,} \quad \NH{AR(\lamb, A)} \leq 1+ \frac{c}{(1-\sin(\alpha))}.\]
Then for $\lamb\in\Sigma_\alpha$, $h\in\mD$:
\begin{align*}
 \NH{BR(\lamb, A)h}&\leq a(\om_0) \NH{R(\lamb, A)h} + b(\om_0)\NH{AR(\lamb, A)h},\\
&\leq \left( a(\om_0) \frac{c}{(1-\sin(\alpha))|\lamb|} +
b(\om_0)\left( 1+ \frac{c}{1-\sin(\alpha)} \right) \right)\NH{h}.
\end{align*}
Let us fix $\eps>0$ and choose $\om_1$ such that:
\begin{equation}
 \max\left(\frac{4a(\om_1)c}{\eps(1-\sin(\alpha))}, 4 b(\om_1) \left( 1+ \frac{c}{1-\sin(\alpha)}\right)\right) \leq 1.
\end{equation}
For this choice of $\om_1$, for all $0<\om_0<\om_1$, for any $\lamb\in\Sigma_\alpha$ such that $|\lamb|\geq \eps\geq
\frac{4a(\om_1)c}{1-\sin(\alpha)}$, we have $\NH{BR(\lamb, A)h}\leq \frac{1}{2}\NH{h}$, and thus the operator $1 - BR(\lamb, A)$ is
invertible with $\NH{1- BR(\lamb, A)}\leq 2$. Since it can easily be shown that \[R(\lamb, A +B) = R(\lamb, A) \left( 1 -
BR(\lamb, A) \right)^{-1},\] one deduces the following estimates about the resolvent of the perturbed operator $L= A +B$:
\begin{equation}
\label{eq:estim resolvent L}
\forall \lamb\in\Sigma_\alpha,\ |\lamb|\geq \eps,\ \NH{R(\lamb, L)}\leq \frac{2}{(1-\sin(\alpha))|\lamb|}. 
\end{equation}
The fact that the spectrum of $L$ lies within $\Theta_{\eps, \alpha}$ is a straightforward consequence of \eqref{eq:estim resolvent L}.
Secondly, \eqref{eq:estim resolvent L} entails that $L$ generates an analytic semigroup of operators on an appropriate
sector. Indeed, if one denotes by $L_{\eps}:= L -\eps$, one deduces from \eqref{eq:estim resolvent L} that
$0\in\rho(L_{2\eps})$ and that for all $\lambda\in\bC$ with $\Re(\lambda)>0$ (in particular, $|\lambda|<|\lambda+2\eps|$)
\begin{align}
 \NH{R(\lambda, L_{{2\eps}})}\, &=\, \NH{R(\lambda+2\eps, L)}\, \leq\, \frac{2}{(1-\sin(\alpha))|\lambda+2\eps|}\,
,\nonumber\\
&\, \leq\, \frac{2}{(1-\sin(\alpha))|\lambda|}\, .
\end{align}
Hence, using the same arguments of Taylor expansion as in the proof of Proposition~\ref{prop:A1 semigroup} and applying Proposition
\ref{prop:pazysemgps}, one easily sees that $L_{{2\eps}}$ generates an analytic semigroup in a (a priori) smaller sector
$\Delta_{\alpha'}$, where $\alpha'\in(0, \frac\pi2)$ can be chosen as $\alpha':= \frac12 \arctan\left( \frac{1-\sin(\alpha)}{2} \right)$. But
if
$L_{2\eps}$ generates an analytic semigroup, so does $L$.
\end{proof}

\subsubsection{$0$ is an eigenvalue with multiplicity $2$}
\label{subsubsec:loczero}

Let us fix $K>1$, $\alpha\in(0, \frac{\pi}{2})$, $\rho\in(0,1)$ and define $\eps=\rho \gapA$. Applying Proposition~\ref{prop:semigroupAB}, we
know that for small $\om_0$ (depending on $K$, $\alpha$, $\rho$), $L$ generates an analytic semigroup on $\Theta_{\eps, \alpha}$.
Let $\Theta_{\eps, \alpha}^{+}:= \ens{\lambda\in\Theta_{\eps, \alpha}}{\Re(\lambda)\geq 0}$ be the subset of
$\Theta_{\eps, \alpha}$ which lies in the positive part of the complex plane (see Figure~\ref{fig:position spectrum A L limfluct}). 

The purpose of this paragraph is to show that one can choose a perturbation $B$ small enough so that no non-zero eigenvalue of $A+B$
remains in the small set $\Theta_{\eps, \alpha}^{+}$. 

\medskip 

To do so, we proceed by an argument of local perturbation: we know that the distance $\gapA= \min\left( \gapAf, \gapAs\right)$ between the
eigenvalue $0$ and the rest of the spectrum of $A$ is strictly positive. In particular, one can separate $0$ from the rest of the spectrum of
$A$
by a circle $\mathscr{C}$ centered in $0$ with radius $\left( \frac{\rho+1}{2} \right)\gapA$.
Note that the choice of $\eps$ made at the beginning of this paragraph ensures that the interior of $\mathscr{C}$ contains
$\Theta_{\eps,\alpha}^{+}$ (Figure~\ref{fig:position spectrum A L limfluct}).

The argument is the following: by construction of $\mathscr{C}$, $0$ is the only eigenvalue (with multiplicity $2$) of the operator $A$ lying in the interior of $\mathscr{C}$. A principle of continuity of eigenvalues shows that, while adding a small enough perturbation $B$ to $A$, the interior of $\mathscr{C}$ still contains either one eigenvalue with algebraic multiplicity $2$ or two eigenvalues with multiplicity $1$; those perturbed eigenvalues remain close but are \emph{a priori} not equal to the initial eigenvalue $0$ (see Figure~\ref{fig:position spectrum A L limfluct}).
\begin{figure}[!ht]
\centering
\subfloat[Position of the spectrum for the self-adjoint operator $A$]{\includegraphics[width=7cm]{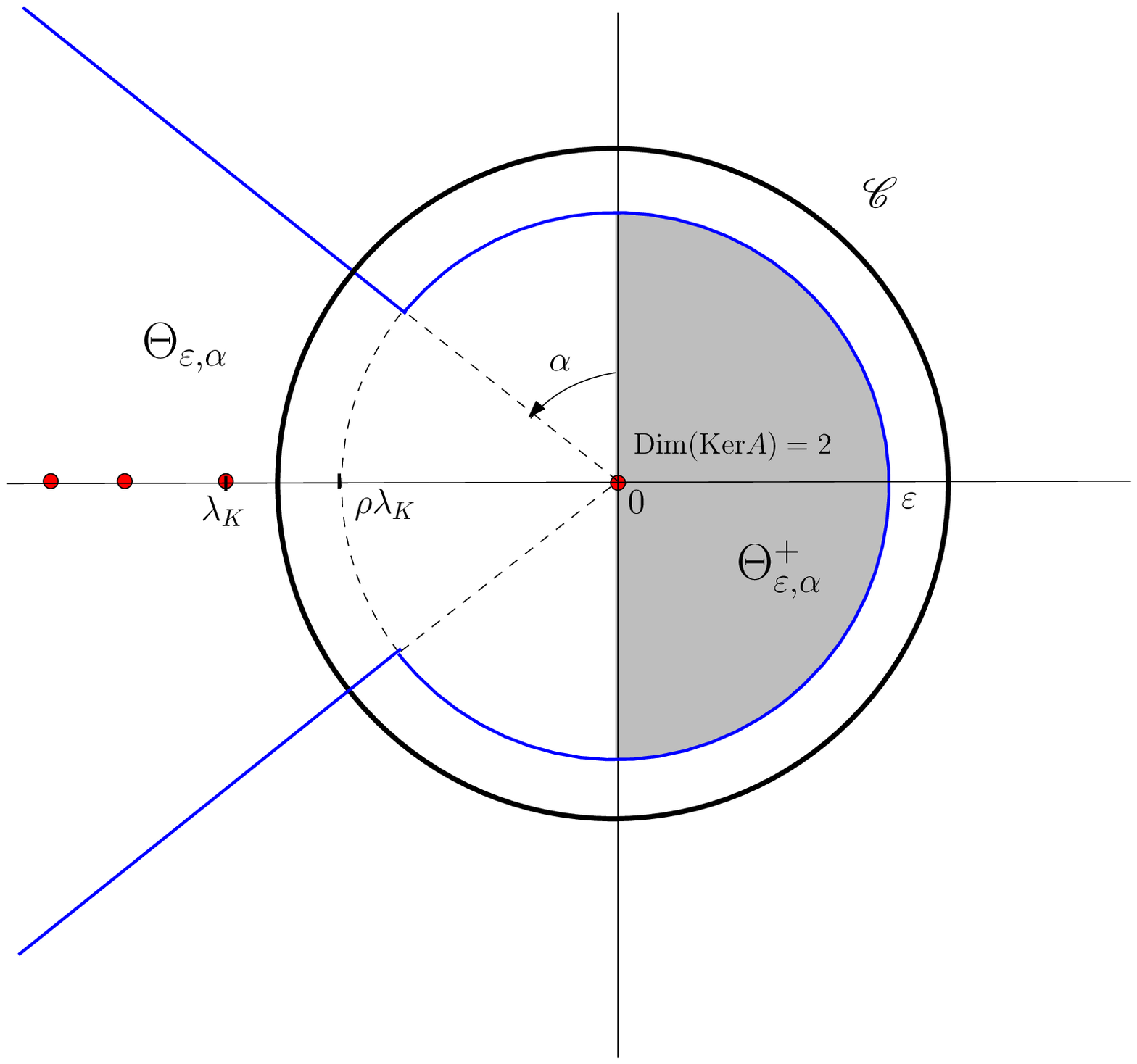}
\label{subfig:coinA limfluct}}
\quad\subfloat[Possible position of the spectrum for the operator $L$]{\includegraphics[width=7cm]{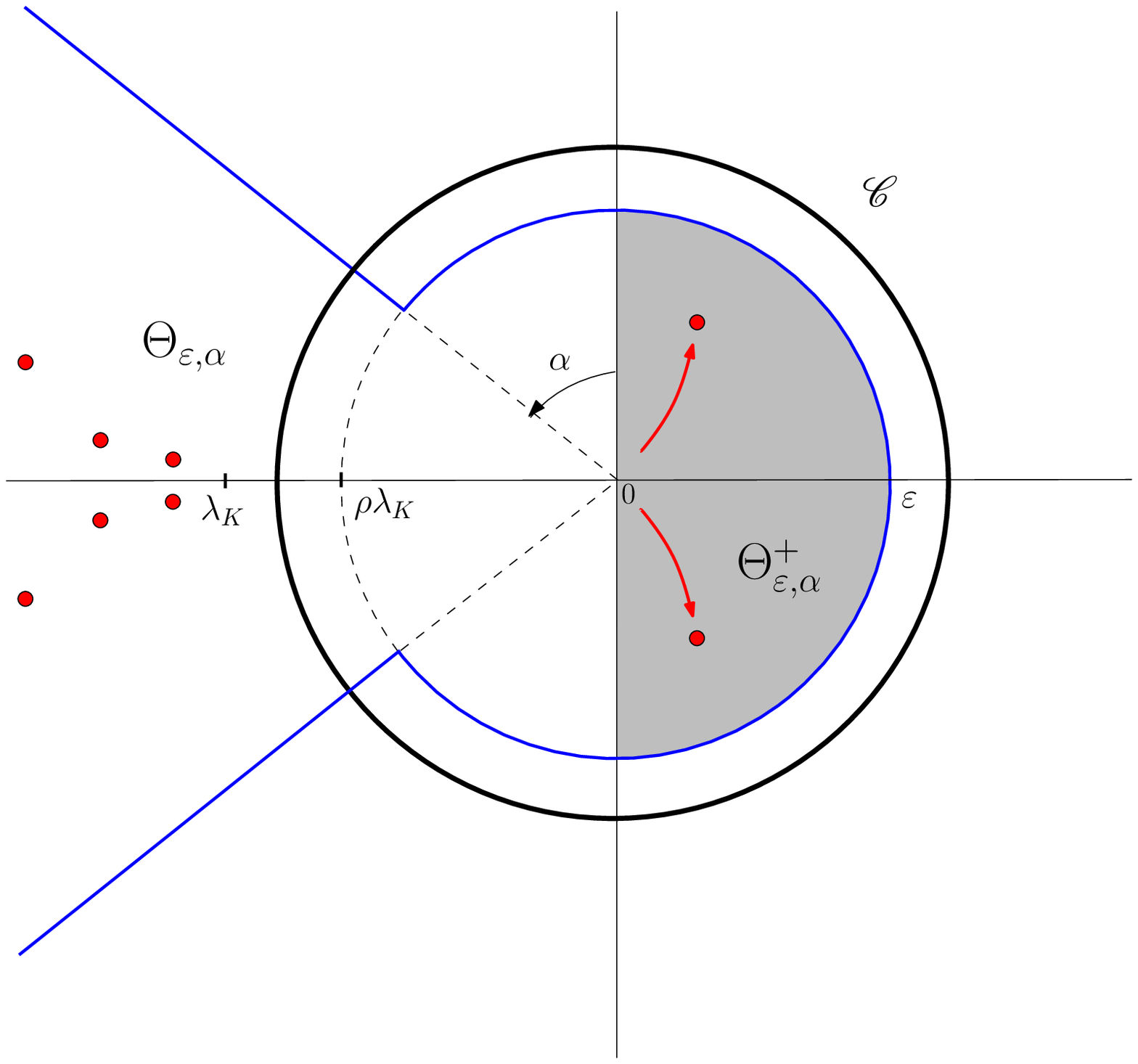}
\label{subfig:coinL limfluct}}
\caption{The domains $\Theta_{\eps, \alpha}$ and $\Theta_{\eps, \alpha}^+$ (in light grey). Note that the two dimensional eigenvalue $0$ for the operator $A$ (\ref{subfig:coinA limfluct}) may split in two single eigenvalues for the perturbed operator $L$ (\ref{subfig:coinL limfluct}). These eigenvalues are the only ones within the circle $\mathscr{C}$. But since we already know that the eigenspace in $0$ of $L$ is of dimension $2$, $0$ is still a double eigenvalue for $L$, by uniqueness.}%
\label{fig:position spectrum A L limfluct}%
\end{figure}

But we already know that for the perturbed operator $L=A+B$, $0$ is always an eigenvalue (since $L\partial_\tta q=0$ and $L p=\partial_\tta q$,
recall
Th.~\ref{th:existence
jrodan block intro limfluct}). Therefore, the algebraic multiplicity of $0$ for the operator $L$ is \emph{at least} $2$. By uniqueness, one can
conclude that $0$ is the \emph{only} element of the spectrum of $L$ within $\mathscr{C}$, and is an eigenvalue with algebraic multiplicity
\emph{exactly} $2$. In particular, there is no element of the spectrum in the positive part of the complex plane.

\medskip

In order to make this argument precise, we need to quantify the appropriate size of the perturbation $B$, by explicit estimates on
the resolvent $R(\lambda, A)$ on the circle $\mathscr{C}$:

\begin{lem}
 \label{lem:estimRGamma}
There exists some explicit constant $c_\mathscr{C}(K)$ only dependent on $K$, such that for all $\lambda\in\mathscr{C}$,
\begin{align}
 \NH{R(\lambda, A)} &\leq c_\mathscr{C}(K),\label{eq:estimRcercle limfluct}\\
\NH{AR(\lambda, A)} &\leq 1+ \left( \frac{\rho+1}{2} \right)\cdot c_\mathscr{C}(K).\label{eq:estimARcercle limfluct}
\end{align}
One can choose $c_\mathscr{C}(K)$ as $c_\mathscr{C}(K)=\frac{1}{\gapA}\max\left(\frac{2}{\rho+1}, \frac{2}{1-\rho}\right)$.
\end{lem}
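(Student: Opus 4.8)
The plan is to reduce the whole statement to the fact, established in Proposition~\ref{prop:Aselfadj}, that (the closure of) $A$ is self-adjoint in $\Hsmu$ endowed with the equivalent norm $\NHw{\cdot}$, with pure point spectrum contained in $(-\infty,0]$ and with $0$ an isolated eigenvalue separated from the rest of $\sigma(A)$ by the gap $\gapA$. For any self-adjoint operator the spectral theorem gives the sharp bound $\NHw{R(\lambda,A)}=\mathrm{dist}\bigl(\lambda,\sigma(A)\bigr)^{-1}$ for every $\lambda\in\rho(A)$, so the first estimate becomes a one-line geometric computation of the distance from the circle $\mathscr{C}$ to $\sigma(A)$.

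Write $\sigma(A)=\{0\}\cup S$ with $S\subseteq(-\infty,-\gapA]$, and recall that $\mathscr{C}$ is the circle of centre $0$ and radius $r:=\bigl(\tfrac{\rho+1}{2}\bigr)\gapA$, which satisfies $0<r<\gapA$ since $\rho\in(0,1)$. For $\lambda\in\mathscr{C}$ one has $|\lambda|=r$, while every point of $S$ lies at distance $\geq\gapA$ from $0$, hence at distance $\geq\gapA-r$ from $\lambda$ by the reverse triangle inequality; therefore
\[
\mathrm{dist}\bigl(\lambda,\sigma(A)\bigr)\;\geq\;\min\bigl(r,\ \gapA-r\bigr)\;=\;\gapA\,\min\!\left(\tfrac{\rho+1}{2},\tfrac{1-\rho}{2}\right).
\]
This yields $\NHw{R(\lambda,A)}\leq \tfrac{1}{\gapA}\max\!\bigl(\tfrac{2}{\rho+1},\tfrac{2}{1-\rho}\bigr)=:c_{\mathscr{C}}(K)$, and since $\NH{\cdot}$ and $\NHw{\cdot}$ are equivalent (the weights $q_0$ and $w$ being bounded above and below), \eqref{eq:estimRcercle limfluct} follows after absorbing the equivalence constant into $c_{\mathscr{C}}(K)$ --- or, more cleanly, one keeps all resolvent estimates in the $\NHw$-norm, which is in any case the one propagated through Proposition~\ref{prop:semigroupAB}.

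For \eqref{eq:estimARcercle limfluct} I would use the algebraic identity $AR(\lambda,A)=\lambda R(\lambda,A)-\mathrm{Id}$ (valid on all of $\Hsmu$ since $R(\lambda,A)$ maps into the domain of the closure of $A$), together with the triangle inequality, the bound \eqref{eq:estimRcercle limfluct} and $|\lambda|=\bigl(\tfrac{\rho+1}{2}\bigr)\gapA$; this gives $\NH{AR(\lambda,A)}\leq 1+\bigl(\tfrac{\rho+1}{2}\bigr)\gapA\, c_{\mathscr{C}}(K)$, which is \eqref{eq:estimARcercle limfluct} (absorbing the harmless $\gapA$ factor). There is no real obstacle in this lemma: the only points requiring a little care are the passage from $A$ to its self-adjoint closure and the systematic use of the equivalence between $\NH{\cdot}$ and $\NHw{\cdot}$, neither of which costs more than a fixed multiplicative constant depending on $K$; everything else is the elementary fact that the resolvent norm of a self-adjoint operator is the reciprocal of the distance to its spectrum.
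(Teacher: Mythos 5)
Your proof is correct and follows essentially the same route as the paper's: use the self-adjointness of (the closure of) $A$ in the $\NHw{\cdot}$-norm, apply the spectral theorem to get $\NHw{R(\lambda,A)}=\mathrm{dist}(\lambda,\sigma(A))^{-1}$, and compute the distance from the circle $\mathscr{C}$ to $\sigma(A)=\{0\}\cup S$ with $S\subseteq(-\infty,-\gapA]$ via the reverse triangle inequality. The paper's proof is phrased directly in terms of the spectral measure $E$ and the integral representation $R(\zeta,A)=\int (\lambda-\zeta)^{-1}\dd E(\lambda)$, but the resulting estimate $\sup_{\lambda\in\sigma(A)}|\lambda-\zeta|^{-1}\leq \gapA^{-1}\max(\tfrac{2}{\rho+1},\tfrac{2}{1-\rho})$ is the same. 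Your explicit remark on the equivalence of $\NH{\cdot}$ and $\NHw{\cdot}$ is well placed: the paper states the lemma in the $\NH{\cdot}$-norm but only has self-adjointness in $\NHw{\cdot}$, so strictly speaking the norm-equivalence constant should be absorbed into $c_\mathscr{C}(K)$ (or the estimate kept in $\NHw{\cdot}$, as you suggest).

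On the second estimate: both you and the paper use $AR(\lambda,A)=\lambda R(\lambda,A)-\mathrm{Id}$, but you are right to note that the triangle inequality together with $|\lambda|=(\tfrac{\rho+1}{2})\gapA$ and $\eqref{eq:estimRcercle limfluct}$ gives $1+(\tfrac{\rho+1}{2})\gapA\,c_\mathscr{C}(K)$, i.e. the lemma's displayed bound $\eqref{eq:estimARcercle limfluct}$ is missing a factor $\gapA$ multiplying $c_\mathscr{C}(K)$. With the paper's choice $c_\mathscr{C}(K)=\gapA^{-1}\max(\tfrac{2}{\rho+1},\tfrac{2}{1-\rho})$ the correct bound simplifies neatly to $\tfrac{2}{1-\rho}$, and one can check directly from $\sup_{\mu\in\sigma(A)}|\mu/(\mu-\lambda)|$ that this is sharp. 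This is a genuine (if inconsequential) typo in the paper; it does not affect the way the lemma is used in $\eqref{eq:condperturbGamma}$--$\eqref{eq:condomega99}$, where $a(\om_0)$ and $b(\om_0)$ tend to zero so only the qualitative finiteness of the constant matters. Your phrase ``absorbing the harmless $\gapA$ factor'' undersells this slightly: you have actually corrected a constant in the paper's statement, not merely absorbed something.
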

\begin{proof}[Proof of Lemma~\ref{lem:estimRGamma}]
Applying the spectral theorem (see~\cite[Th. 3, p.1192]{Dunford1988}) to the essentially self-adjoint operator $A$, there exists a
spectral measure $E$ vanishing on the complementary of the spectrum of $A$ such that $A=\intR \lambda \dd E(\lambda)$. In that
extent, one has for any $\zeta\in\mathscr{C}$
\begin{equation}
 R(\zeta, A)\, =\, \intR \frac{\dd E(\lambda)}{\lambda -\zeta}\, .
\end{equation}
In particular, for $\zeta\in\mathscr{C}$
\begin{equation}
 \NH{R(\zeta, A)} \leq \sup_{\lambda\in \sigma(A)} \frac{1}{|\lambda-\zeta|}\leq
\frac{\max\left(\frac{2}{\rho+1}, \frac{2}{1-\rho}\right)}{\gapA}\, .
\end{equation}
The estimation \eqref{eq:estimARcercle limfluct} is straightforward.
\end{proof}

We are now in position to apply our argument of local continuity of eigenvalues: following~\cite[Th III-6.17, p.178]{Kato1995}, there exists a
decomposition of the operator $A$ according to $\Hsmu=F_0 \oplus \Fneg$ (in the sense that $AF_0\subset F_0$,
$A \Fneg\subset \Fneg$ and $P_0 \mD \subset \mD$, where $P_0$ is the projection on $F_0$ along $\Fneg$) in such a way that $A$
restricted to $F_0$ has spectrum $\{0\}$ and $A$ restricted to $\Fneg$ has spectrum $\sigma(A)\smallsetminus\{0\}\subseteq
\ens{\lambda\in\bC}{\Re(\lambda)< 0}$. Let us note that the dimension of $F_0$ is \emph{exactly} $2$, since the characteristic space of $A$ for
the eigenvalue $0$ is reduced to its kernel which is of dimension $2$ (see Prop.~\ref{prop:Aselfadj}).

Then, applying~\cite[Th. IV-3.18, p.214]{Kato1995}, and using Proposition~\ref{prop:Bperturb},
we find that if one chooses $\om_2>0$, such that
\begin{equation}
\label{eq:condperturbGamma}
 \sup_{\lamb\in\mathscr{C}} \left( a(\om_2) \NH{R(\lamb, A)} + b(\om_2)\NH{AR(\lamb, A)}\right)<1,
\end{equation}
then for all $0<\om_0<\om_2$, the perturbed operator $L$ is likewise decomposed according to $\Hsmu=G_0\oplus \Gneg$, in such a way that
$\dim(F_0)=\dim(G_0)=2$, and that the spectrum of $L$ is again separated in two parts by $\mathscr{C}$. But thanks to Theorem
\ref{th:existence jrodan block intro limfluct}, we already know that the characteristic space of the perturbed operator $L$ according to the
eigenvalue $0$ is \emph{at least} of dimension $2$ (since $L\partial_\tta q=0$ and $Lp=\partial_\tta q$). We can conclude, that for such an
$0<\om_0<\om_2$, $0$ is the
only
eigenvalue in $\mathscr{C}$ and that $\dim(G_0)$ is \emph{exactly} $2$.

Applying Lemma~\ref{lem:estimRGamma}, we see that condition \eqref{eq:condperturbGamma} is satisfied if we choose $\om_2>0$
so that:
\begin{equation}
 \label{eq:condomega99}
a(\om_2) c_\mathscr{C}(K) + b(\om_2)\left( 1 + \left( \frac{\rho+1}{2} \right) c_{\mathscr{C}}(K) \right)<1.
\end{equation}
In that case, the spectrum of $L$ is contained in $\ens{z\in\bC}{\Re(z)\leq 0}$. Theorem~\ref{theo:spectreLq} is proved.
\section{Non self-averaging phenomenon for the fluctuation process (Proof of Theorem~\ref{theo:long time evolution eta limfluct})}
\label{sec:limit behavior etat limfluct}
The purpose of this section is to prove Theorem~\ref{theo:long time evolution eta limfluct}, that is the linear asymptotics \eqref{eq:linear behavior eta} for the SPDE \eqref{eq:SPDE fluctuations limfluct}.

In our framework, (recall that $\mu=\frac 12(\delta_{-\om_0} + \delta_{\om_0})$), the solution $\eta$ of evolution \eqref{eq:SPDE fluctuations
limfluct} in $\mS'(\Sun\times\bR)$ acts on test functions $\vphi$ of the form $\vphi=(\vphi(\cdot, +\om_0), \vphi(\cdot, -\om_0))$. In
particular, one can understand $\eta$ as an element of $\Hsmu$ by identifying $\eta$ with $(\eta_{\om_0}, \eta_{-\om_0})$, where, for any
smooth function $\psi: \Sun\to \bR$, $\eta_{\om_0}(\psi):= \eta(\psi, 0)$ and $\eta_{-\om_0}(\psi):= \eta(0, \psi)$. Defining analogously
$W_{\pm\om_0}$ for the Wiener process $W$ in \eqref{eq:W process intro limfluct}, the object of interest is then
\begin{equation}
 \label{eq:SPDE fluctuations plusmoins limfluct}
\forall t>0,\ \eta_t = \eta_0 + \int_{0}^{t} L\eta \dd s + W_t.
\end{equation}
\subsection{The noise $W$ as a cylindrical Brownian Motion}
\label{subsec:W cylindrical BM limfluct}
We first focus on the regularity properties of the noise $W$: in the stationary case ($q_t=q_t|_{t=0}=q$ for all $t>0$) the covariance defined
in
\eqref{eq:W process intro limfluct} becomes, for any regular functions on $\Sun\times\bR$, $\vphi_1$ and $\vphi_2$, $s,t>0$:
\begin{equation}
 \label{eq:new covariance W SPDE limfluct}
\bE\left( W_s(\vphi_1) W_t(\vphi_2)\right)= (s\wedge t) \intR\intS \partial_\tta\vphi_{1}\partial_\tta\vphi_{2} q \dd\lambda\dd\mu.
\end{equation}
Consequently, it is easily seen that $(W_{+\om_0, t}, W_{-\om_0, t})$ is a couple of two independent Gaussian processes with covariance (where
$\psi_1, \psi_2:\Sun\to\bR$):
\begin{equation}
 \label{eq:covariance Wtplus limfluct}
\forall \om\in\{+\om_0, -\om_0\},\ \bE\left( W_{\om, s}(\psi_1) W_{\om, t}(\psi_2) \right)= \frac12 (s\wedge t)\intS
\partial_\tta\psi_1\partial_\tta\psi_2 q(\cdot, \om).
\end{equation}
In what follows, we will denote by $\Hz$ the closed subspace of $\Hsmu$ consisting of elements of $\Hsmu$ with zero mean-value; in particular
the norm
$\NH{\cdot}$ defined in \eqref{eq:norm Hmoins 1 weight k mu limfluct} coincides on $\Hz$ with:
\begin{equation}
 \label{eq:norm Hz limfluct}
\forall h\in\Hz,\ \NH{h}=\left(2\pi\intR\intS\mH^2\right)^{\frac12},
\end{equation}
where we recall that $\mH$ is the primitive of $h$ such that $\intS \mH=0$.
Following~\cite[p. 96]{DaPrato1992}, $W$ has the same law as a $Q$-Wiener process in
the Hilbert space $\Hz$, for an appropriate bounded symmetric operator $Q$ on $\Hz$: indeed, if one denotes by $X$ a
$Q$-Wiener process on $\Hz$, with the following definition of $Q$
\begin{equation}
 \label{eq:Q limfluct}
\forall h\in\Hz,\ Qh:= \partial_\tta\left( q \mH \right),
\end{equation}
then one readily verifies that the Gaussian process $(W(\vphi))_{\vphi}$ has the same law as
$(X(\vphi))_{\vphi}:=\left(\croH{X_t}{\svect{\partial_\tta^2\vphi}{0}}, \croH{X_t}{\svect{0}{\partial_\tta^2\vphi}}\right)_{\vphi}$.

The fact that this supplementary weight $q$ in \eqref{eq:covariance Wtplus limfluct} entails some technical complications, but one really has to consider the operator $Q$ defined in \eqref{eq:Q limfluct} only as a perturbation of the case $Q=I$.

\subsection{Existence and uniqueness of a weak solution to the fluctuation equation}
We now turn to the existence and uniqueness of a weak solution of \eqref{eq:SPDE fluctuations plusmoins limfluct}. We recall that any
$\Hsmu$-valued
predictable process $\eta_t$, $t\in[0, T]$ is a \emph{weak solution} of \eqref{eq:SPDE fluctuations plusmoins limfluct} if the trajectories of
$\eta$ are almost-surely Bochner-integrable and if for all $\vphi\in\mD(L^\ast)$ and for all $t\in[0, T]$
\begin{equation}
 \label{eq:weak solution SPDE limfluct}
\eta_t(\vphi) = \eta_0(\vphi) + \int_0^t \eta_s(L^\ast \vphi) \dd s + W_t(\vphi).
\end{equation}
\begin{prop}
\label{prop:unique weak solution SPDE limfluct}
 Equation has a unique weak solution in $\Hsmu$, given by the mild formulation
\begin{equation}
 \label{eq:mild formulation SPDE limfluct}
\eta_t = T_L(t)\eta_0 + \int_0^t T_L(t-s)\dd W_s,\quad t\in[0, T].
\end{equation}
\end{prop}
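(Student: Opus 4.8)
The plan is to recast \eqref{eq:SPDE fluctuations plusmoins limfluct} into the classical Hilbert-space framework of \cite{DaPrato1992}. By Theorem~\ref{theo:spectreLq} the operator $(L,\mD)$ generates an analytic, hence strongly continuous, semigroup $T_L(t)$ on $\Hsmu$, and by \S~\ref{subsec:W cylindrical BM limfluct} the driving noise $W$ has the law of a $Q$-Wiener process on the closed subspace $\Hz\subseteq\Hsmu$, where $Q$ is the bounded, nonnegative, symmetric operator of \eqref{eq:Q limfluct}. In this situation, existence and uniqueness of a weak solution of \eqref{eq:SPDE fluctuations plusmoins limfluct}, together with its mild representation \eqref{eq:mild formulation SPDE limfluct}, follow from the general theory of linear SPDEs in Hilbert spaces (see \cite[Ch.~5--6]{DaPrato1992}), \emph{provided} the stochastic convolution $W_L(t):=\int_0^t T_L(t-s)\dd W_s$ is well defined and admits a predictable version with almost surely Bochner-integrable trajectories. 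So the whole proof reduces to checking the integrability condition
\begin{equation}
\label{eq:trace condition}
\int_0^T \Tr\!\left(T_L(s)\, Q\, T_L(s)^\ast\right)\dd s\; <\; \infty .
\end{equation}

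The hard part is that $Q$ is merely bounded --- not trace class --- on the infinite-dimensional space $\Hz$ (in the constant-weight case it is essentially comparable to the identity), so \eqref{eq:trace condition} genuinely requires the parabolic smoothing of $T_L$. To isolate it, I would first shift away the eigenvalue $0$ and its Jordan block exactly as in the proof of Proposition~\ref{prop:semigroupAB}: writing $L_\eps := L-\eps$ for a small $\eps>0$, one has $T_L(s)=e^{\eps s}T_{L_\eps}(s)$, $0\in\rho(L_\eps)$, and the fractional powers $(-L_\eps)^{\gamma}$, $\gamma\in(0,1)$, are well defined. Analyticity of the semigroup yields the standard estimate $\N{(-L_\eps)^{\gamma}T_{L_\eps}(s)}\leq C_\gamma\, s^{-\gamma}$ for $s\in(0,T]$. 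On the other hand, $L$ has compact resolvent (Proposition~\ref{prop:Ltildecompactresolvent}) and is a bounded perturbation of the second-order operator $A$, whose eigenvalues grow like $n^2$ (Proposition~\ref{prop:Aselfadj}); a Weyl-type comparison then gives $\sum_n |\lambda_n(L_\eps)|^{-2\gamma}<\infty$ for every $\gamma>\tfrac14$, i.e.\ $(-L_\eps)^{-\gamma}$ is Hilbert--Schmidt, with Hilbert--Schmidt norm $c_\gamma$. Since $0\leq Q\leq \N{Q}\,\mathrm{Id}$, factoring $T_{L_\eps}(s)=(-L_\eps)^{-\gamma}\cdot(-L_\eps)^{\gamma}T_{L_\eps}(s)$ gives
\begin{equation}
\label{eq:trace bound}
\Tr\!\left(T_L(s)\, Q\, T_L(s)^\ast\right)\;\leq\; e^{2\eps T}\,\N{Q}\, c_\gamma^{2}\, C_\gamma^{2}\, s^{-2\gamma},
\end{equation}
which is integrable on $(0,T)$ as soon as $\gamma<\tfrac12$; choosing any $\gamma\in(\tfrac14,\tfrac12)$ establishes \eqref{eq:trace condition}.

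Once \eqref{eq:trace condition} is known, $W_L$ is a well-defined $\Hsmu$-valued Gaussian process admitting a predictable, Bochner-integrable (indeed mean-square continuous) version, so $\eta_t := T_L(t)\eta_0 + W_L(t)$ is a legitimate $\Hsmu$-valued process and the only candidate solution. That it is a \emph{weak} solution in the sense of \eqref{eq:weak solution SPDE limfluct} is the standard mild $\Rightarrow$ weak implication: pairing the mild formula against $\vphi\in\mD(L^\ast)$, applying a stochastic Fubini theorem and using $\int_0^t T_L(r)^\ast\vphi\,\dd r\in\mD(L^\ast)$ together with $L^\ast\!\int_0^t T_L(r)^\ast\vphi\,\dd r = T_L(t)^\ast\vphi - \vphi$. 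For uniqueness, the difference $\zeta$ of two weak solutions is a weak solution of the noise-free equation with $\zeta_0=0$, and the same Fubini/resolvent computation identifies every weak solution with the mild one, forcing $\zeta\equiv 0$. Thus the only genuinely non-routine step is \eqref{eq:trace condition}, and within it the Hilbert--Schmidt bound on $(-L_\eps)^{-\gamma}$, which rests on transferring the eigenvalue asymptotics from the self-adjoint model operator $A$ to the non-self-adjoint $L$ via the $A$-compactness of $B$; everything else is a direct application of the framework of \cite{DaPrato1992}.
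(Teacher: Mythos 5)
Your overall strategy---reduce existence/uniqueness of the mild solution to the trace condition on the stochastic convolution and exploit parabolic smoothing of $T_L$---is morally the same as the paper's, but the route to the trace condition is genuinely different. Rather than estimating fractional powers of $L_\eps$ directly, the paper proves that $A^{-1}$ is of trace class (Lemma~\ref{lem:A of trace class limfluct}, via the MinMax Principle for the self-adjoint $A$) and then invokes the ready-made perturbation result \cite[Prop.~5.25]{DaPrato1992}, whose hypotheses are precisely that $L=A+B$ with $A$ self-adjoint negative, $A^{-1}$ trace class, and $B$ a continuous operator from $\Ldmu$ into $\Hsmu$ with $\croH{Bh}{h}\leq c\NH{h}^2$; existence, uniqueness and the mild representation then follow from \cite[Th.~5.2, Th.~5.4]{DaPrato1992}. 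Your version attempts to re-derive essentially the same bound by hand, which would be a legitimate alternative if every step were justified.

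There is, however, a genuine gap in the step asserting that $(-L_\eps)^{-\gamma}$ is Hilbert--Schmidt ``by a Weyl-type comparison'' from $\sum_n |\lambda_n(L_\eps)|^{-2\gamma}<\infty$. The operator $L$ is not normal, and for a non-normal compact operator $\ell^p$-summability of the eigenvalues does \emph{not} imply membership in the corresponding Schatten ideal: Weyl's inequality bounds eigenvalue sums by singular-value sums, not the reverse. Moreover, the MinMax Principle you would need to get the eigenvalue asymptotics (Proposition~\ref{prop:minimax principle}) only applies to the self-adjoint $A$, not to $L$. To make your argument rigorous you would have to establish the Schatten-class property for the self-adjoint $A_\eps$ first and then transfer it to $L_\eps$, for instance by factorizing $(-L_\eps)^{-\gamma}=\left[(-L_\eps)^{-\gamma}(-A_\eps)^{\gamma}\right](-A_\eps)^{-\gamma}$ and proving boundedness of the bracket---which in turn requires identifying fractional power domains of $L_\eps$ with those of $A_\eps$, a nontrivial perturbation result. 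That transfer from the self-adjoint model $A$ to the non-self-adjoint $L$ is exactly what the paper delegates to \cite[Prop.~5.25]{DaPrato1992} after proving Lemma~\ref{lem:A of trace class limfluct} and checking the relative-bound hypothesis on $B$; your proposal leaves it unjustified.
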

To prove Proposition~\ref{prop:unique weak solution SPDE limfluct}, one needs to define properly the stochastic convolution $W_L(t):=\int_0^t
T_L(t-s)\dd W_s$. In this purpose, let use prove firstly that the inverse of $A$ is of class trace:

\begin{lem}
 \label{lem:A of trace class limfluct}
The operator $A^{-1}$ is of class trace in $\Hsmu$. Equivalently, if $(\lambda_n^{(A)})_{n\geq 1}$ is the sequence of eigenvalues of the
self-adjoint operator $A$, one has
\begin{equation}
 \label{eq:A of trace class limfluct}
\Som{n}{1}{\infty}{\frac{1}{\lambda_n^{(A)}}}<\infty.
\end{equation}
\end{lem}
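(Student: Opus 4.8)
The plan is to prove that $A^{-1}$ is trace class by estimating the growth of the eigenvalues $(\lambda_n^{(A)})_{n\geq 1}$ and showing the series $\sum_n 1/|\lambda_n^{(A)}|$ converges. Recall from \S~\ref{subsec:spectral properties A limfluct} that, through the change of variables $M$, the operator $A$ is conjugate to the \emph{uncoupled} operator $\tA=(\tA_1,\tA_2)$ acting on the product space $\Hsq\times\HsPhi$, where $\tA_1=L_{q_0}$ is the non-disordered Kuramoto operator and $\tA_2$ is a Fokker--Planck operator with sine potential. Since the norm $\NHw{\cdot}$ is equivalent to $\NH{\cdot}$, and since conjugation by the bounded invertible $M$ preserves the eigenvalues, it suffices to show that $\tA_1^{-1}$ is trace class in $\Hsq$ and $\tA_2^{-1}$ is trace class in $\HsPhi$: the spectrum of $A$ is then the union (with multiplicity) of the two spectra, and the sum of the two convergent series is again convergent.

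First I would treat each component as a classical Sturm--Liouville / Schr\"odinger-type operator on the circle. Both $\tA_1$ and $\tA_2$ are, after the integrations by parts already performed in the proofs of Propositions~\ref{prop:A1 semigroup} and~\ref{prop:A2selfadj}, second-order elliptic operators with smooth bounded coefficients acting on $2\pi$-periodic functions (modulo the one-dimensional kernel, which contributes a harmless finite term and which I would handle by working on the orthogonal complement of the kernel, where the operator is boundedly invertible). For such an operator on a compact one-dimensional manifold, Weyl's law gives $|\lambda_n|\sim c\, n^2$ as $n\to\infty$ for a positive constant $c$; hence $\sum_n 1/|\lambda_n|<\infty$. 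Concretely, I would obtain the lower bound $|\lambda_n^{(\tA_i)}|\geq c\, n^2$ by a min-max / Courant--Fischer argument: compare the Dirichlet form of $\tA_i$ (which, up to the bounded weights $q_0$, $w$ that are bounded above and below, is comparable to $\intS |\partial_\tta(\cdot)|^2$ or $\intS|\cdot|^2$ after the appropriate primitive is taken) with that of the flat Laplacian $-\half\partial_\tta^2$ on $\Sun$, whose eigenvalues are exactly $\{\frac12 k^2 : k\geq 0\}$ each with multiplicity at most $2$. The equivalence of the relevant quadratic forms, which follows from $0<c\leq q_0,w\leq C$ together with Poincar\'e's inequality on $\Sun$, then transfers the quadratic lower bound on eigenvalues from the Laplacian to $\tA_i$ up to a multiplicative constant.

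An alternative, slightly softer route I would keep in reserve: it is enough to show that the resolvent $(1-A)^{-1}$ is Hilbert--Schmidt with Hilbert--Schmidt \emph{square}, or more simply that some power $A^{-k}$ is trace class for $k$ large and then bootstrap; but on a one-dimensional base the direct $n^2$ estimate is cleanest and already gives summability of $1/|\lambda_n|$ itself. The main obstacle is bookkeeping rather than conceptual: one must be careful that the spectral computations are done in the \emph{weighted} $H^{-1}$-type spaces $\Hsq$ and $\HsPhi$ rather than in $L^2$, so the quadratic forms one compares are the ones appearing in \eqref{eq:estimgapA2} and in the proof of Proposition~\ref{prop:A1 semigroup} (involving the primitives $\mH_0$, $\mV_0$), and one must check the form-equivalence constants are uniform in $n$. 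Once that is in place, the estimate $\lambda_n^{(A)}\asymp n^2$ and hence \eqref{eq:A of trace class limfluct} follow immediately, completing the proof of Lemma~\ref{lem:A of trace class limfluct}.
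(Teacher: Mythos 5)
Your proposal follows essentially the same route as the paper: conjugate by $M$ to decouple into $\tA_1$ and $\tA_2$, then use the min-max characterization (Proposition~\ref{prop:minimax principle}) together with a two-sided equivalence of Dirichlet forms to compare the eigenvalues of each block with those of the flat Laplacian on $\Sun$, whose inverse is trace class in one dimension. The paper does exactly this, writing $\mE_0(u,v)=\cro{u}{-\Delta v}_{-1}=2\pi\intS uv$ and transferring the Weyl-type lower bound $\lambda_n^{(-\Delta)}\asymp n^2$ to $\tA_1$ and $\tA_2$.

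The one place your sketch is too casual is the claim that the form equivalence for $\tA_1=L_{q_0}$ follows merely ``up to the bounded weights $q_0$, $w$.'' For $\tA_2$ that is exactly right (it is a pure weighted Fokker--Planck operator and \eqref{eq:estim bellow Dir form A2 limfluct} gives the two-sided bound directly), but $\tA_1$ contains in addition the nonlocal mean-field term $-\partial_\tta\bigl(q_0 (J\ast u)\bigr)$, which is \emph{not} a multiplicative weight but a rank-two perturbation of the quadratic form. The paper deals with this by citing the explicit form estimate \cite[Eq.~(2.47)]{BGP} for $1-\tA_1$ rather than by an argument from boundedness of coefficients. Your route still closes, since a finite-rank form perturbation cannot change the $n^2$ asymptotics of the eigenvalue counting; but as written, ``comparable up to bounded weights'' is not literally true for $\tA_1$, and you should either invoke the BGP estimate as the paper does or explicitly separate off the convolution term and argue that its finite rank leaves the min-max quotients unchanged for all but finitely many test spaces.
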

\begin{proof}[Proof of Lemma~\ref{lem:A of trace class limfluct}]
Since $A$ and $\tA=M\circ A\circ M^{-1}$ (recall \eqref{eq:defAselfadj}) are conjugate, it suffices to prove \eqref{eq:A of trace class
limfluct} when $A$ is replaced by $\tA$.
The idea of the proof is that identity \eqref{eq:A of trace class limfluct} is true when $\tA=(\tA_1, \tA_2)$ is replaced by $(-\Delta,
-\Delta)$
and that $\tA$ is only a relatively-bounded perturbation of this case. More precisely, the proof relies on the following MinMax Principle:
\begin{prop}[{\cite[p. 1543]{Dunford1988b}}]
 \label{prop:minimax principle}
Let $(F, \mD(F))$ a self-adjoint linear operator on a separable Hilbert space $\mH$ such that $F$ is positive, with compact resolvent. We
denote
by $\mS^n$ the family of $n$-dimensional subspace of $\mH$, and for $n\geq 1$ we let $\lambda_n$ the number defined as follows
\begin{equation}
\label{eq:minimax limfluct}
 \lambda_n:= \sup_{G\in\mS^n} \inf_{u\in(G\cap\mD(F))\smallsetminus\{0\}} \frac{\cro{u}{Fu}_{\mH}}{\cro{u}{u}_{\mH}}.
\end{equation}
Then there exists a complete orthonormal system $(\psi_n)_{n\geq 1}$ such that 
\[F\psi_n = \lambda_n \psi_n, \quad n\geq1.\]
In other words, the sequence $(\lambda_n)_{n\geq 1}$  is the non-decreasing enumeration of the eigenvalues of $F$, each repeated a number of
times equal to its multiplicity. Moreover, the $\sup$ in \eqref{eq:minimax limfluct} is attained for $G$ equal to the span of $\{\psi_1,
\dots, \psi_n\}$.
\end{prop}
Let us apply Proposition~\ref{prop:minimax principle} to $F=-\Delta$ with domain
\begin{equation}
 \label{eq:domain laplacian}
\mD(-\Delta):=\ens{u}{u\in\mC^2(\Sun),\ \intS u(\tta)\dd\tta =0}\, ,
\end{equation}
in $\bH^{-1}$ (recall the definition of $\left(\bH^{-1}, \cro{\cdot}{\cdot}_{-1}\right)$ in Remark~\ref{rem:weighted spaces no disorder
limfluct}) and let
us denote by $\mE_0(u,v):= \cro{u}{-\Delta v}_{-1} = 2\pi\intS uv$ the Dirichlet form associated to $-\Delta$. Note
that $\mE_0$ is well defined on $\Ld\supset \mD(-\Delta)$. Then, denoting by $(\lambda_n^{(-\Delta)})_{n\geq1}$ the sequence of eigenvalues
associated to $-\Delta$ in $\bH^{-1}$:
\[\lambda_n^{(-\Delta)}=\sup_{G\in\mS^n} \inf_{u\in(G\cap\mD(-\Delta))\smallsetminus\{0\}} \frac{\mE_0(u,u)}{\cro{u}{u}_{-1}}.\]
Since the supremum is attained for $G=\{\psi_1, \dots, \psi_n\}\subseteq \Ld$, one has in fact:
\[\lambda_n^{(-\Delta)}=\sup_{G\in\mS^n} \inf_{u\in(G\cap \Ld)\smallsetminus\{0\}} \frac{\mE_0(u,u)}{\cro{u}{u}_{-1}}.\]
Secondly, note that one does not change the result by considering $1-A_1$ instead of $-A_1$. Hence, if ones denotes by $\mE_1(u,v):=
\cro{u}{(1-A_1)v}_{-1, q_0}$ the Dirichlet form associated to $1-A_1$, one deduces from~\cite[Eq.(2.47)]{BGP} that $\mE_1$ is well defined on
$\Ld$ and that it is equivalent to $\mE_0$: there exists a constant $C>0$ such that
\[\forall u\in \Ld, \quad \frac1C \mE_0(u,u) \leq \mE_1(u,u) \leq C \mE_0(u,u).\]
Then, using again Proposition~\ref{prop:minimax principle},
\[\lambda_n^{(1-A_1)}= \sup_{G\in\mS^n} \inf_{u\in(G\cap \Ld)\smallsetminus\{0\}} \frac{\mE_1(u,u)}{\cro{u}{u}_{-1, q_0}}.\]
Since the norms $\Vert \cdot\Vert_{-1}$ and $\Vert \cdot\Vert_{-1, q_0}$ are equivalent, one directly sees that there exist
constants $c,C>0$
such that, for all $n\geq 1$
\begin{equation}
\label{eq:bounds for lambda A1}
 c\lambda_n^{(-\Delta)}\leq \lambda_n^{(1-A_1)} \leq C \lambda_n^{(-\Delta)}.
\end{equation}
One can prove similar bounds for $A_2$ in the Hilbert space $\HsPhi$ in the same way: first notice that any eigenvector which corresponds to a
non-zero eigenvalue of $A_2$ is necessarily with zero mean-value, so that it suffices to work on the domain $\{v\in \Ld,\ \intS v=0\}$.
It is then easy to deduce from \eqref{eq:estim bellow Dir form A2 limfluct} that both
Dirichlet forms $\mE_0$ and $\mE_{2}$ (recall definition \eqref{eq:dirichlet form for A2}) are equivalent on the subspace of $\Ld$ with
zero mean-value. Using Proposition~\ref{prop:minimax principle}, one easily obtains similar bounds as \eqref{eq:bounds for lambda
A1} for $A_2$ and \eqref{eq:A of trace class limfluct} follows.
\end{proof}

\medskip

Following the lines of~\cite{DaPrato1992}, we deduce that the linear operator $\int_0^t T_L(s) Q T_L(s)^\ast\dd s$ is of class trace: indeed, it is easy to see from \eqref{eq:operator B L limfluct} that $B$ satisfies the assumption (5.59) in~\cite[p.145]{DaPrato1992}, namely, $B$ is a continuous linear operator from $\Ldmu$ into $\Hsmu$, and there exists a constant $c>0$ such that for all $h\in\mD$, $\croH{Bh}{h} \leq
c\NH{h}^2$. Since Lemma~\ref{lem:A of trace class limfluct} is true, the assumptions of~\cite[Prop. 5.25, p.145]{DaPrato1992} are satisfied, so that the operator $\int_0^t T_L(s) Q T_L(s)^\ast\dd s$ is of class trace. Then an application of~\cite[Th. 5.2]{DaPrato1992} shows that the stochastic convolution $W_L(\cdot)$ is well defined as a predictable process in $\Hsmu$. The assumptions of~\cite[Th. 5.4]{DaPrato1992} concerning the existence and uniqueness a weak solution of \eqref{eq:SPDE fluctuations plusmoins limfluct} are satisfied and Proposition~\ref{prop:unique weak solution SPDE limfluct} is
proved.

\subsection{Linear asymptotic behavior of the fluctuation process}
We are in position to prove the main statement of Theorem~\ref{theo:long time evolution eta limfluct}, that is the asymptotic behavior \eqref{eq:linear behavior eta} of the mild solution \eqref{eq:mild formulation SPDE limfluct}. We place ourselves under the hypothesis of Theorems~\ref{th:existence jrodan block intro limfluct} and \ref{theo:spectreLq}.

First note that the continuous linear form $\ell_{\partial_\tta q}(\cdot)$  \eqref{eq:matrix representation L H} on $\Hsmu$ can be represented, by Riesz representation theorem, as a scalar product w.r.t. some vector $\zeta\in \Hsmu$: \begin{equation}
\label{eq:lqprime zeta}
\ell_{\partial_\tta q}(\cdot)=\croH{\zeta}{\cdot}.
\end{equation}
The convergence \eqref{eq:linear behavior eta} is a consequence of Remark~\ref{rem:expression of lp} and the following two propositions:
\begin{prop}
 \label{prop:linear behavior Pzero eta}
The stochastic convolution $W_L(t)$ satisfies the following linear behavior, as $t\to+\infty$: $\frac{W_L(t)}{t}\to0$, where the convergence is
in law.
\end{prop}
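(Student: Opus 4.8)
The plan is to control the stochastic convolution $W_L(t):=\int_0^t T_L(t-s)\dd W_s$ by splitting it according to the spectral decomposition $\Hsmu = G_0\oplus\Gneg$ from Theorem~\ref{theo:spectreLq}, writing $W_L(t) = P_0 W_L(t) + \Pneg W_L(t)$. On $\Gneg$ the restriction of $L$ has spectrum bounded away from the imaginary axis in $\{\Re(\lambda)<0\}$ (at distance at least $\gapL$), so $T_L(t)$ restricted to $\Gneg$ decays exponentially; a standard estimate on Ornstein--Uhlenbeck-type stochastic convolutions (see~\cite[Th.~5.2, Th.~6.11]{DaPrato1992}) then shows that $\bE\,\NH{\Pneg W_L(t)}^2$ stays bounded in $t$, hence $\Pneg W_L(t)/t\to 0$ in $L^2$, thus in probability and in law. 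So the only real content is the behaviour of the two-dimensional piece $P_0 W_L(t)$.

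On $G_0 = \Span(\partial_\tta q, p)$ the operator $L$ acts (in the basis $\partial_\tta q, p$) as the nilpotent Jordan block $\bigl(\begin{smallmatrix}0&1\\0&0\end{smallmatrix}\bigr)$, so $P_0 T_L(t-s) P_0$ is the matrix $\bigl(\begin{smallmatrix}1&t-s\\0&1\end{smallmatrix}\bigr)$ (in coordinates $\ell_{\partial_\tta q},\ell_p$). Using $\ell_p(Lh)=0$ for all $h$ (the second row of the matrix representation \eqref{eq:matrix representation L H} vanishes, since $Lh$ has zero mean on $\Sun$) one sees that the $p$-component of $W_L(t)$ is simply $\ell_p(W_t)$, which is a (time-changed) Brownian motion; and the $\partial_\tta q$-component picks up the extra factor $t-s$, giving
\begin{equation}
\label{eq:P0WL plan}
P_0 W_L(t) = \left(\int_0^t \ell_{\partial_\tta q}(\dd W_s) + \int_0^t (t-s)\,\ell_p(\dd W_s)\right)\partial_\tta q + \left(\int_0^t \ell_p(\dd W_s)\right) p.
\end{equation}
The first integral is $O(\sqrt t)$ in $L^2$, so its contribution to $W_L(t)/t$ vanishes; the last term is also $O(\sqrt t)$. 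For the middle term, $\int_0^t (t-s)\,\ell_p(\dd W_s)$ has variance $\int_0^t (t-s)^2\,\dd\langle\ell_p(W)\rangle_s$; since $\ell_p(W_s)$ has quadratic variation linear in $s$ (from the covariance~\eqref{eq:new covariance W SPDE limfluct} restricted via $\ell_p$), this variance is of order $t^3$, so after division by $t$ we get something of order $t^{1/2}\to\infty$ in $L^2$ — but this is exactly the object that should \emph{not} contribute to the limit, so I need a more careful argument: rescale and show $\frac1t\int_0^t(t-s)\ell_p(\dd W_s)$ converges in law to $0$ is \emph{false} unless I made a sign/structure error.

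Let me reconsider: the point of the proposition is that $W_L(t)/t\to0$, so in fact all three terms must vanish after division by $t$. The resolution is that $\ell_p(W_s)\equiv 0$: indeed $W_s$ has zero mean-value on $\Sun$ for each $s$ (it is built from $\partial_\tta\vphi$-type test functions, hence lives in $\Hz$), and by the explicit formula \eqref{eq:expression of lp} for $\ell_p$ in terms of $\intS h_\pm$, any element of zero mean-value on $\Sun$ is killed by $\ell_p$. Hence $\ell_p(W_s)=0$ identically, the $p$-component and the dangerous $(t-s)$-integral both vanish, and $P_0 W_L(t) = \bigl(\int_0^t \ell_{\partial_\tta q}(\dd W_s)\bigr)\partial_\tta q$, which is $O(\sqrt t)$ in $L^2$. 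Therefore $W_L(t)/t\to0$ in $L^2(\Omega;\Hsmu)$, hence in probability and in law. The main obstacle is precisely this bookkeeping: one must verify carefully that $W$ lives in $\Hz$ and that $\ell_p$ annihilates $\Hz$ (so that the nilpotent part of $T_L$, which is what would otherwise produce linear-in-$t$ growth, never gets activated by the noise), and one must quote the standard $\HsPhi$/$\Hsmu$ stochastic-convolution bounds from~\cite{DaPrato1992} for the exponentially stable part on $\Gneg$.
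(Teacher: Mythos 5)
Your proof is correct and takes essentially the same approach as the paper: split $W_L(t)$ along the spectral decomposition $\Hsmu=G_0\oplus\Gneg$, use the exponential decay of $T_L$ on $\Gneg$ to bound $\bE\NH{\Pneg W_L(t)}^2$ uniformly in $t$, and observe that since $W$ takes values in $\Hz$ (each $W_s(\cdot,\pm\om_0)$ has zero mean on $\Sun$) the functional $\ell_p$ annihilates the noise, so the nilpotent part of the Jordan block on $G_0$ is never activated and $P_0 W_L(t)=\ell_{\partial_\tta q}(W_t)\partial_\tta q$ is only $O(\sqrt t)$ in $L^2$. This is precisely the paper's key observation (phrased there as ``the restriction of $\Pzero$ to $\Hz$ coincides with $\ell_{\partial_\tta q}(\cdot)\partial_\tta q$''); the detour in your write-up is safely resolved, and your treatment of the $\Gneg$ piece is if anything more streamlined than the paper's further splitting of the stochastic convolution into components along $\Span(\partial_\tta q)$ and $\Gneg$.
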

\begin{prop}
 \label{prop:linear behavior Pstrict eta}
For every initial condition $\eta_0$, $\frac{T_L(t)\eta_0}{t}$ converges, as $t\to+\infty$, to $\ell_p(\eta_0)\partial_\tta q$.
\end{prop}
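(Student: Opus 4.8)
The plan is to prove Proposition~\ref{prop:linear behavior Pstrict eta} by decomposing the deterministic semigroup $T_L(t)$ along the spectral splitting $\Hsmu= G_0\oplus \Gneg$ provided by Theorem~\ref{theo:spectreLq}. Write $\eta_0 = P_0\eta_0 + \Pneg\eta_0$ with $P_0\eta_0 = \ell_{\partial_\tta q}(\eta_0)\partial_\tta q + \ell_p(\eta_0)p$ (recall \eqref{eq:ell projection P0}). Since $T_L(t)$ commutes with $P_0$ and $\Pneg$, it suffices to treat the two pieces separately.

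First I would handle the contribution from $\Gneg$. The restriction $L|_{\Gneg}$ has spectrum contained in $\{\Re(\lambda)<0\}$ and, by Proposition~\ref{prop:semigroupAB}, $L$ generates an analytic semigroup; hence the restricted semigroup $T_L(t)|_{\Gneg}$ decays exponentially, $\NH{T_L(t)\Pneg\eta_0}\leq C e^{-\gamma t}\NH{\eta_0}$ for some $\gamma>0$ (one may take $\gamma$ slightly below $\gapL$). Standard analytic-semigroup theory (e.g.\ \cite[Cor.~2.3.3]{Lunardi1995} or the spectral bound for sectorial generators) gives this. In particular $\frac{1}{t}T_L(t)\Pneg\eta_0 \to 0$ in $\Hsmu$.

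Next I would compute the action of $T_L(t)$ on $G_0$ explicitly. On the two-dimensional invariant subspace $G_0=\Span(\partial_\tta q, p)$, the operator $L$ acts as the nilpotent Jordan block: $L\partial_\tta q = 0$ and $Lp = \partial_\tta q$ (Theorem~\ref{th:existence jrodan block intro limfluct}). Therefore in the basis $(\partial_\tta q, p)$ the matrix of $L|_{G_0}$ is $\bigl(\begin{smallmatrix}0&1\\0&0\end{smallmatrix}\bigr)$, so $e^{tL}|_{G_0} = I + tL|_{G_0}$, which means
\begin{equation*}
T_L(t)\big(\ell_{\partial_\tta q}(\eta_0)\partial_\tta q + \ell_p(\eta_0)p\big) = \big(\ell_{\partial_\tta q}(\eta_0) + t\,\ell_p(\eta_0)\big)\partial_\tta q + \ell_p(\eta_0)p.
\end{equation*}
Dividing by $t$ and letting $t\to\infty$, the bounded terms $\ell_{\partial_\tta q}(\eta_0)\partial_\tta q$, $\ell_p(\eta_0)p$ and the exponentially small $T_L(t)\Pneg\eta_0$ all vanish in the limit, leaving $\frac{1}{t}T_L(t)\eta_0 \to \ell_p(\eta_0)\partial_\tta q$. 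Since convergence holds in the norm $\NH{\cdot}$ it holds a fortiori in law in $\Hsmu$.

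The main obstacle, and the only point needing care, is justifying the exponential decay of $T_L(t)$ on $\Gneg$: because $L$ is not self-adjoint, one cannot simply read the decay rate off the spectrum via the spectral theorem, and must instead invoke the analyticity of the semigroup together with the resolvent estimate \eqref{eq:estim resolvent L} to control $\NH{T_L(t)|_{\Gneg}}$. Once that estimate is in hand — which follows from the sectoriality established in Proposition~\ref{prop:semigroupAB} applied to $L$ restricted to the reducing subspace $\Gneg$, whose spectrum lies strictly in the open left half-plane and at distance at least $\gapL$ from $0$ — the rest is the elementary linear-algebra computation above.
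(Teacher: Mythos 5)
Your proof is correct and takes essentially the same route as the paper: decompose along $\Hsmu = G_0\oplus\Gneg$, use the nilpotent Jordan structure on $G_0$, and use exponential decay on $\Gneg$ from analyticity of the semigroup. The one small difference: you use the fact that $T_L(t)$ commutes with $P_0$ and $\Pneg$ to fully decouple the two pieces before computing, whereas the paper instead writes $T_L(t)\eta_0 = \alpha(t)\partial_\tta q + \beta(t)p + Y(t)$, derives a (possibly coupled) system for $(\alpha,\beta,Y)$, retains a cross term $\ell_{\partial_\tta q}(L\Pneg Y(s))$ in the ODE for $\alpha$, and then shows $\frac{1}{t}\int_0^t \ell_{\partial_\tta q}(L\Pneg Y(s))\dd s \to 0$ using the exponential bound on $Y$. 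Since $G_0$ and $\Gneg$ are obtained via Riesz projections, they are $L$-invariant and the cross term is actually zero, so your decoupling is justified and gives a slightly cleaner presentation; the paper's version is more conservative but reaches the same conclusion by the same exponential-decay estimate.
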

Before proving these results, let us show how the speed $v(\om)$ in Theorem~\ref{theo:long time evolution eta limfluct} is computed. The above results give that for fixed $\om$, the process $ \frac{\eta_{t}^{\om}}{t}$ converges in law, as $t\to\infty$ to $\ell_{p}(\eta_{0}^{\om}\partial_{\tta}q = \frac{\int_{\bS}C_{+}(\om)}{\int_{\bS} p_{+}} \partial_{\tta}q$. Using \eqref{eq:covariance C}, an easy computation shows that $v(\cdot)$ is Gaussian with variance \eqref{eq:variance speed intro}. Let us now prove these two propositions:
\begin{proof}[Proof of Proposition~\ref{prop:linear behavior Pzero eta}]
Recall that $W$ is a $Q$-Wiener process in $\Hz$, which can be decomposed into $\Hz=\Span(\partial_\tta q)\oplus \Gneg$. Note also that the restriction on $\Hz$ of the projection $\Pzero$ defined on $\Hsmu$ by \eqref{eq:ell projection P0} coincides with $\ell_{\partial_\tta q}(\cdot)\partial_\tta q$. With a small abuse of notations, we will use the same notation $\Pzero$ for this restriction on $\Hz$. Let us decompose the stochastic convolution into $W_L(t)=\int_0^t T_L(t-s) \Pzero \dd W_s + \int_0^t T_L(t-s) \Pneg \dd W_s$, and
treat the two terms separately: 

\noindent For the first term $\frac{1}{t}\int_0^t T_L(t-s) \Pzero \dd W_s$, one has, using that $T_L(u) \partial_\tta
q=\partial_\tta q$ for all $u>0$
\begin{align}
 \frac{1}{t}\int_0^t T_L(t-s) \Pzero \dd W_s &= \frac{\partial_\tta q}{t}\int_0^t T_L(t-s) \ell_{\partial_\tta q} \dd W_s,\\
&=\frac{\partial_\tta q}{t}\ell_{\partial_\tta q}W_t= \frac{\partial_\tta q}{t}\croH{\zeta}{W_t},\ \text{(recall \eqref{eq:lqprime zeta})}.
\end{align}
Thanks to the $Q$-Wiener structure of $W$ (see \S~\ref{subsec:W cylindrical BM limfluct}), one has
\begin{equation}
 \bE\left( \NH{\frac{1}{t}\int_0^t T_L(t-s) \Pzero \dd W_s}^2 \right)= \frac{\NH{\partial_\tta q}^2}{t^2}\bE\left( |\croH{\zeta}{W_t}|^2
\right)=
\frac{\NH{\partial_\tta q}^2}{t}\croH{Q\zeta}{\zeta},
\end{equation}
which converges to $0$ as $t\to+\infty$.

\noindent For the second term $\int_0^t T_L(t-s) \Pneg \dd W_s$, it is easy to see that it is the unique weak solution in $\Hz$ of
\begin{equation}
 \label{eq:SPDE stochastic convolution}
w_t = \int_0^t Lw_s \dd s + \Pneg W_t.
\end{equation}
Let us decompose evolution \eqref{eq:SPDE stochastic convolution} along this decomposition $\Hz=\Span(\partial_\tta q)\oplus\Gneg$: writing
$w_t= \Pzero w_t +
\Pneg w_t:= y_t + z_t$, one has:

\begin{equation}
 \left\{\begin{array}{ccl}
         z_t&=&\int_0^t \Pzero Ly_s\dd s,\\
y_t&=&\int_0^t \Pneg L\Pneg y_s\dd s + \Pneg W_t.
        \end{array}
\right.
\end{equation}
Since the operator $\Pneg L\Pneg$ has its spectrum in the negative part of the complex plane with a strictly positive spectral gap $\gapL$ and
generates an semigroup of operators, it is immediate to see from the covariance estimates of stochastic convolutions (see
\cite[Th. 5.2, p.119]{DaPrato1992}) that there exist some $t_0>0$ and a constant $c>0$ such that for all $t\geq t_0$
\begin{equation}
 \label{eq:estim yt covariance limfluct}
\bE\left( \NH{y_t} \right)^2\leq \bE\left( \NH{y_t}^2 \right)\leq ce^{-\frac{\gapL}{2} t}
\end{equation}
Consequently, one has
\begin{align}
 \bE\left( \NH{z_t} \right)&\leq \int_0^t \bE\left( \NH{\Pzero Ly_s} \right)\dd s= \NH{\partial_\tta q} \int_0^t
\bE\left(|\ell_{\partial_\tta q}(Ly_s)|\right)\dd
s,\nonumber\\
&= \int_0^t \bE\left(  |\croH{\zeta}{Ly_s}|\NH{\partial_\tta q} \right)\dd s,\ \text{(recall \eqref{eq:lqprime zeta})}\nonumber\\
&\leq \NH{\partial_\tta q} \NH{L^\ast \zeta}\int_0^t \bE\left(\NH{y_s}\right)\dd s,\nonumber\\
&=\NH{\partial_\tta q} \NH{L^\ast \zeta}\left(\int_0^{t_0} \bE\left(\NH{y_s}\right)\dd s + \int_{t_0}^t
\bE\left(\NH{y_s}\right)\dd s\right).\label{eq:last estim zt}
\end{align}
It is immediate from \eqref{eq:estim yt covariance limfluct} and \eqref{eq:last estim zt} that the following convergence holds:
\begin{equation}
 \label{eq:convergence zt}
\frac{\bE\left( \NH{z_t} \right)}{t} \to_{t\to+\infty} 0.
\end{equation}
Putting together \eqref{eq:estim yt covariance limfluct} and \eqref{eq:convergence zt}, Proposition~\ref{prop:linear behavior Pzero eta} is
proved.
\end{proof}

\begin{proof}[Proof of Proposition~\ref{prop:linear behavior Pstrict eta}]
Let us fix an initial condition $\eta_0\in \Hsmu$. 
Then $X(t):=T_L(t)\eta_0$ is the unique solution in $\Hsmu$ of
\begin{equation}
 \label{eq:equation for TLt etazero limfluct}
X(t)=\eta_0 +\int_0^t LX_s \dd s.
\end{equation}
Decompose $X(t)$ along the direct sum $G_0\oplus\Gneg$, that is $X(t)=\alpha(t) \partial_\tta q+\beta(t) p+ Y(t)$, with $Y(t)\in\Gneg$. Then,
projecting on
$\partial_\tta q$, $p$ and $\Gneg$ respectively (see \eqref{eq:matrix representation L H}), one obtains that \eqref{eq:equation for TLt etazero
limfluct} is
equivalent to
\begin{equation}
 \label{eq:equation for TLt etazero equivalent form}
\forall t>0,\quad\left\{\begin{array}{ccl}
 \alpha(t) &=& \ell_{\partial_\tta q}(\eta_0) + \int_0^t \left(\beta(s) + \ell_{\partial_\tta q}(L\Pneg Y(s))\right)\dd s,\\
\beta(t) &=& \ell_p(\eta_0),\\
Y(t)&=& \Pneg\eta_0 + \int_0^t \Pneg L\Pneg Y(s)\dd s.
\end{array}\right.
\end{equation}
Then, since $T_{\Pneg L\Pneg}(t)$ is a semigroup of contraction whose infinitesimal generator has a strictly positive spectral
gap $\gapL$, there exists a constant $c>0$ such that $Y(t)=T_{\Pneg L\Pneg}(t)\Pneg\eta_0$ and $\NH{Y(t)}\leq
ce^{-\frac{\gapL}{2}
t}$ (in particular, $\frac1t\NH{Y(t)}\rightarrow_{t\to+\infty}0$). Then, using again \eqref{eq:lqprime zeta},
\begin{align}
 \frac{\alpha(t)}{t} &=\frac{\ell_{\partial_\tta q}(\eta_0)}{t} + \ell_p(\eta_0) + \frac1t\int_0^t \ell_{\partial_\tta q}(L\Pneg Y(s))\dd s,\\
&=\frac{\ell_{\partial_\tta q}(\eta_0)}{t} + \ell_p(\eta_0) + \frac1t\int_0^t \croH{\Pneg^\ast L^\ast \zeta}{Y(s)}\dd s.
\end{align}
Using the previous exponential bound for $Y(s)$, it is easy to see that $\frac{\alpha(t)}{t}$ converges to $\ell_p(\eta_0)$ as $t\to+\infty$.
The
result of Proposition~\ref{prop:linear behavior Pstrict eta} follows.
\end{proof}

\begin{appendix}
\section{Gelfand-triple construction}
\label{app:gelfand}
The construction of the weighted Sobolev spaces defined in \S~\ref{subsec:weighted sobo limfluct} and used throughout the
paper is based on the usual notion of Gelfand-triple (or rigged-Hilbert spaces) that we make precise here. We refer to~\cite[\S~2.2]{BGP} or \cite[p.81]{MR697382} for precise definitions.

Namely, one can understand the closure of $\ens{h\in\mC^1(\Sun)}{\intS h=0}$ \wrt the norm $\Vert h \Vert_{-1, k}$ defined in \eqref{eq:norm
Hmoins 1 weight k limfluct} as the dual space $V'$ of the space $V$ closure of
$\ens{h\in\mC^1(\Sun)}{\intS h=0}$ with respect to the norm \[\Vert{h}\Vert_V:= \left( \intS h'(\tta)^2 k(\tta)\dd\tta\right)^\frac12.\] The
pivot space is the usual
$\bL^2(\lambda)$, endowed with the Hilbert norm \[\Vert h\Vert_{\bL^2}:=\left( \intS h(\tta)^2\dd\tta\right)^\frac12.\] One easily sees that the inclusion $V\subseteq \bL^2(\lambda)$ is dense.
Consequently, one can define $T:\bL^2(\lambda)\rightarrow V'$ by setting $Th(v)= \intS h(\tta) v(\tta)\dd\tta$.
One can prove that $T$ continuously injects $\bL^2(\lambda)$ into $V'$ and that $T(\bL^2(\lambda))$ is dense into $V'$ so that one can
identify $h\in \bL^2(\lambda)$ with $Th\in V'$. Then for $h\in \bL^2(\lambda)$,
\begin{equation}
\label{eq:Gelfand triple}
 \Vert h\Vert_{V'} = \Vert Th\Vert_{V'} = \sup_{v\in V} \frac{\intS \mH h'}{\Vert v\Vert_{V}}= \sqrt{\intS \frac{\mH^2}{k}}\, ,
\end{equation}
where we used in \eqref{eq:Gelfand triple} Cauchy-Schwarz inequality for the lower bound and chose $v':=\frac{\mH}{k}$ for the upper bound.
This enables to identify $\bH^{-1}_{k}$ with $V'$.
\end{appendix}

\section*{Acknowledgements} This is a part of my PhD thesis. I would like to thank my PhD supervisors Giambattista Giacomin and Lorenzo Zambotti for introducing this subject, for their useful advice, and for their encouragement. I am also indebted to Christophe Poquet for useful discussions. 

I would like to thank the referee for useful comments and suggestions.

\section*{References}
\def\cprime{$'$}

\end{document}